\theoremstyle{definition}
\newtheorem{theorem}[equation]{Theorem}
\newtheorem{lemma}[equation]{Lemma}
\newtheorem{corollary}[equation]{Corollary}
\newtheorem{definition}[equation]{Definition}
\newtheorem{remark}[equation]{Remark}
\newtheorem{proposition}[equation]{Proposition}
\newtheorem*{theorem*}{Theorem}
\numberwithin{equation}{section}
\begin{document}

\title{Multilayered fluid-structure interactions: existence of weak solutions for time-periodic  and initial-value problems}


\author{Claudiu M\^{i}ndril\u{a}
	\thanks{A.R is supported by the Grant RYC2022-036183-I funded by MICIU/AEI/10.13039/501100011033 and by ESF+. A.R, C.M have been partially supported by the Basque Government through the BERC 2022-2025 program and by the Spanish State Research Agency through BCAM Severo Ochoa CEX2021-001142-S and through project PID2023-146764NB-I00 funded by MICIU/AEI/10.13039/501100011033 and cofunded by the European Union.} 
	\and Arnab Roy$^{*,1}$ 
}

\date{\today}

\maketitle

\bigskip

\centerline{$^*$ BCAM, Basque Center for Applied Mathematics}

\centerline{Mazarredo 14, E48009 Bilbao, Bizkaia, Spain.}

\centerline{$^1$IKERBASQUE, Basque Foundation for Science, }

\centerline{Plaza Euskadi 5, 48009 Bilbao, Bizkaia, Spain.}

\begin{abstract}
We study the interaction between incompressible viscous fluids and multilayered elastic structures in a 3D/2D/3D framework, where a 3D fluid interacts with a 2D thin elastic layer, coupled to a 3D thick elastic solid. The system is driven by time-periodic boundary conditions involving Bernoulli pressure. We prove the existence of at least one time-periodic weak solution when the boundary pressure has a sufficiently small $L^2-$ norm.  

A key feature of our analysis is the assumption of viscoelasticity in the thick solid, which is crucial for obtaining diffusion estimates and ensuring energy stability. Without this assumption, weak solutions are established for the initial-value problem. Our results extend prior work on 2D/1D/2D configurations to the more complex 3D/2D/3D setting, providing new insights into multilayered fluid-structure interactions.
\end{abstract}

\section{Introduction}




In this article, we investigate the existence of time-periodic solutions for the interaction between a viscous incompressible fluid and a multi-layered elastic structure, consisting of both thin and thick layers. The motivation for studying time-periodic solutions in the context of blood flow through arteries stems from the naturally rhythmic nature of the cardiovascular system. Blood flow is governed by the periodic contraction and relaxation of the heart, which generates pulsatile flow patterns in the arteries. This cyclic process results in recurring pressure and velocity profiles throughout each cardiac cycle. Modeling the flow as time-periodic enables us to capture the fundamental dynamics of arterial blood flow, providing a more accurate and realistic representation of physiological conditions.


We assume that the initial configuration $\Omega_{F}$ is a cylinder of radius $R$ and length $L$. It is given by 
\[
\Omega_{F}:=\left\{ \left(x,y,z\right)\in\mathbb{R}^{3}:x=r\cos\theta,\ y=r\sin\theta,\ \left(r,\theta,z\right)\in\left(0,R\right)\times\left(0,2\pi\right)\times\left(0,L\right)\right\}. 
\]
The inflow and outflow parts of the boundary are given by
\[
\Gamma_{in}=\left(0,R\right)\times\left(0,2\pi\right)\times\left\{ 0\right\} ,\quad\Gamma_{out}=\left(0,R\right)\times\left(0,2\pi\right)\times\left\{ L\right\} 
\]
and the flexible part of the boundary is given by
\[
\Gamma=\left\{ \left(R\cos\theta,R\sin\theta,z\right):\left(\theta,z\right)\in\left(0,2\pi\right)\times\left(0,L\right)\right\}.
\]

Here $\Gamma$ can be parametrized by a mapping 
\[
\omega:=\left(0,2\pi\right)\times\left(0,L\right),\quad\varphi\left(\theta,z\right)=\left(R\cos\theta,R\sin\theta,z\right).
\]
Let $I:=[0,T]$ denote the time interval for $T>0$. A thin elastic shell is attached to $\Gamma$, and it's motion occurs in the direction of the unit outer normal vector $\mathbf{e}_{r}=\mathbf{e}_{r}\left(\theta\right)=\left(\cos\theta,\sin\theta,0\right)$ for $\theta \in \left(0,2\pi\right)$. The displacement of the shell is described in Lagrangian coordinates by the mapping
$\eta:I\times\omega\mapsto\mathbb{R}$. The fluid domain $\Omega_{F} ^{\eta} (t)$ evolves with each $t\in I$. Thus, for each $t\in I$, we obtain the fluid domains 
\[
\Omega_{F}^{\eta}\left(t\right):=\left\{ \left(x,y,z\right)\in\mathbb{R}^{3}:\sqrt{x^{2}+y^{2}}<R+\eta\left(t,\theta,z\right),\ z\in\left(0,L\right)\right\} , 
\]
with the corresponding moving boundary
\[
\Gamma^{\eta}\left(t\right):=\left\{ \left(x,y,z\right)\in\mathbb{R}^{3}:\sqrt{x^{2}+y^{2}}=R+\eta\left(t,\theta,z\right),\ \left(\theta,z\right)\in\omega\right\} .
\]

We can obtain a transition mapping from $\Omega_{F}$ to $\Omega_{F}^{\eta}(t)$ via 
\begin{equation*}
\psi_{\eta}:\Omega\mapsto\Omega_{\eta},\quad\psi_{\eta}\left(x,y,z\right)=\left(\frac{R+\eta}{R}x,\frac{R+\eta}{R}y,z\right),
\end{equation*}
which is a homeomorphism  and even a $C^{k}$ diffeomorphism provided that $\eta \in C^{k}(\omega)$.
Finally, outside the thin membrane, we have a thick elastic solid of thickness $H>0$. Initially its domain is 
\[
\Omega_{S}=\left\{ \left(x,y,z\right)\in\mathbb{R}^{3}:R<\sqrt{x^{2}+y^{2}}<R+H\right\} 
\]
and it evolves in time.
As usual for describing solid materials, we denote its displacement in Lagrangian coordinates   by \[\mathbf{d}:I\times\Omega_{S}\mapsto\mathbb{R}^{3}.\]

In 2D the situation can be described as in Figure~\ref{fig}.

\begin{figure}[!htbp]
    \centering
  \tikzset{every picture/.style={line width=0.75pt}} 

\begin{tikzpicture}[x=0.75pt,y=0.75pt,yscale=-1,xscale=1]

\draw    (100,107) -- (99.94,129.47) -- (99.6,257.47) ;
\draw    (400,112) -- (399.6,262.47) ;
\draw    (100,107) .. controls (142.6,49.47) and (333.6,149.47) .. (400,112) ;
\draw    (99.6,153.47) .. controls (139.6,123.47) and (358.6,184.47) .. (398.6,154.47) ;
\draw    (100,225) .. controls (140,195) and (359.6,249.47) .. (399.6,219.47) ;
\draw    (99.6,257.47) .. controls (264.6,254.47) and (367.6,281.47) .. (399.6,262.47) ;

\draw (131,182) node [anchor=north west][inner sep=0.75pt]   [align=left] {Fluid domain};
\draw (245,181.4) node [anchor=north west][inner sep=0.75pt]    {$\Omega _{F}^{\eta }( t)$};
\draw (122,119) node [anchor=north west][inner sep=0.75pt]   [align=left] {Thick structure};
\draw (231,119.4) node [anchor=north west][inner sep=0.75pt]    {$\Omega _{S}( t)$};
\draw (75,124.4) node [anchor=north west][inner sep=0.75pt]    {$\Gamma _{in}^{S}$};
\draw (401,123.4) node [anchor=north west][inner sep=0.75pt]    {$\Gamma _{out}^{S}$};
\draw (76,181.4) node [anchor=north west][inner sep=0.75pt]    {$\Gamma _{in}$};
\draw (401,187.4) node [anchor=north west][inner sep=0.75pt]    {$\Gamma _{out}$};
\draw (131,229) node [anchor=north west][inner sep=0.75pt]   [align=left] {Thick structure};
\draw (240,229.4) node [anchor=north west][inner sep=0.75pt]    {$\Omega _{S}( t)$};

\end{tikzpicture}
    \caption{The fluid-structure domain: a 2D section}
    \label{fig}
\end{figure}
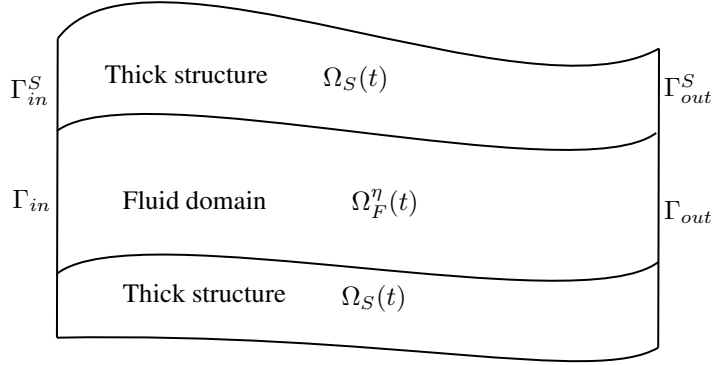


 We consider the fluid equations which aim to model the blood flow through a large vessel. Therefore, we use the \emph{incompressible Navier-Stokes equations} and we write
 
\begin{equation}\label{eqn:fluid}
\begin{cases}
\rho_{F}\left(\partial_{t}\mathbf{u}+\left(\mathbf{u}\cdot\nabla\right)\mathbf{u}\right)=\mathrm{div}\ \sigma & \text{in}\ \Omega_{F}^{\eta}\left(t\right)\\
\text{div}\ \mathbf{u}=0 & \text{in}\ \Omega_{F}^{\eta}\left(t\right)
\end{cases}
\end{equation}
for each $t\in I$. Here $\rho_{F}$ denotes the fluid density, which is constant and set to equal 1. The symbol $\sigma$ denotes the usual Cauchy stress tensor, defined by
\begin{equation}
\sigma:=2\mu\mathbb{D}\left(\mathbf{u}\right)-p\mathbb{I},\quad\mathbb{D}\left(\mathbf{u}\right)=\frac{1}{2}\left(\nabla\mathbf{u}+\left(\nabla\mathbf{u}\right)^{T}\right)
\end{equation}
where $\mu$ is the kinematic viscosity of the fluid, also set to 1. The function
$p:I\times\Omega_{F}^{\eta}\left(t\right)\mapsto\mathbb{R}$ denotes the fluid pressure and $\mathbb{D}(\mathbf{u})$ is the symmetric part of the gradient.

The vertical \emph{displacement} of the thin structure from the reference configuration is denoted by \[\eta : I \times \omega \mapsto \mathbb{R}\] and is supposed to fulfill the wave type equation
\begin{equation}\label{eqn:wave}
\rho_{S}h\partial_{tt}\eta+K^{\prime}\left(\eta\right)=f,\quad\left(t,z\right)\in I\times\omega
\end{equation}
with $\rho_{S}$ being the structure density and $h$ the thickness and assuming $\rho_{S}h=1$.
In \eqref{eqn:wave} we denote by $K^{\prime}\left(\eta\right)$ the $L^{2}$ gradient of the Koiter elastic energy $K(\eta)$ from \eqref{eqn:koiter}.
In this work we shall assume for the simplicity of the exposition that $K^{\prime}\left(\eta\right)=\Delta^{2}\eta$ which would correspond to a linear elastic shell.
We shall follow closely \cite[p. 162]{Ciarlet05}.
Let us consider the elastic Koiter energy 
\begin{equation}\label{eqn:koiter}
K\left(\eta\right):=\frac{h}{2}\int_{\omega}\mathcal{A}:\mathbb{G}\left(\eta\right)\otimes\mathbb{G}\left(\eta\right)dA+\frac{h^{3}}{6}\int_{\omega}\mathcal{A}:\mathbb{R}^{\sharp}\left(\eta\right)\otimes\mathbb{R}^{\sharp}\left(\eta\right)dA
\end{equation}
where $h$ is the thickness of the shell and $\mathcal{A}$ is a 4th order tensor also known in the literature as elasticity (or stiffness) tensor depending on the first fundamental form of $\Gamma$. 
Then, if $\phi : \omega \mapsto \mathbb{R}^{3}$ is a parametrization of $\Gamma$, we may parametrize the moving boundary $\Gamma^{\eta}$ by 
\[
\phi_{\eta}:\omega\mapsto\mathbb{R}^{3},\quad\phi_{\eta}\left(\theta,z\right)=\phi\left(\theta,z\right)+\eta\left(\theta,z\right)\mathbf{e}_{r}\left(\theta\right)
\] 
and define the \emph{change of metric tensor} via
\begin{equation}
\mathbb{G}_{ij}\left(\eta\right):=\partial_{i}\phi_{\eta}\cdot\partial_{j}\phi_{\eta}-\partial_{i}\phi\cdot\partial_{j}\phi
\end{equation}
and the \emph{change of curvature tensor} via 
\begin{equation}
\mathbb{R}_{ij}^{\sharp}\left(\eta\right):=\frac{\partial_{ij}\phi_{\eta}\cdot\left(\partial_{1}\phi_{\eta}\times\partial_{2}\phi_{\eta}\right)}{\left|\partial_{1}\phi\times\partial_{2}\phi\right|}-\partial_{ij}\phi\cdot\mathbf{e}_{r}.
\end{equation}
Following \cite{Ciarlet05} one can linearize the tensors $\mathbb{G}$, $\mathbb{R}^{\sharp}$ to obtain $K^{\prime}\left(\eta\right)=\Delta^{2}\eta+B\eta$ where $B$ is a linear second order differential operator. It is shown in \cite[Thm. 4.4-2]{Ciarlet05} that $K$ is $H^2$ coercive
\[
\left\Vert \nabla^{2}\eta\right\Vert _{L_{x}^{2}}^{2}\lesssim K\left(\eta\right).
\]
Therefore without loss of generality we may assume that  \[
K\left(\eta\right)=\int_{\omega}\left|\nabla^{2}\eta\right|^{2}dA,\quad K^{\prime}\left(\eta\right)=\Delta^{2}\eta.
\]

We assume that the thick elastic structure has  reference configuration (recall that $\omega =\left(0,2\pi\right)\times\left(0,L\right)$)
\begin{equation}
    \Omega_{S}:=\left(R,R+H\right) \times \omega
\end{equation}
and it deforms at time $t\in I$ becoming $\Omega_{S}(t)$. We denote the displacement of the thick structure by \[\mathbf{d}:I\times\Omega_{S}\mapsto\mathbb{R}^{3}\] and we assume it fulfills the equation 
\begin{equation}\label{eqn:thick}
\rho_{S_2} \partial_{tt}\mathbf{d}=\text{div} \  \mathbb{S}
\end{equation}
with $\mathbb{S}$ the Lam\'{e} stress tensor given by
\begin{equation}\label{eqn:lame-stress}
\mathbb{S}:=2\lambda_{1}\mathbb{D}\left(\mathbf{d}+\delta\partial_{t}\mathbf{d}\right)+\lambda_{2}\left(\text{div}\ \mathbf{d}\right)\mathbb{I}_{3}
\end{equation}
with $\lambda_{1,2}$ are the Lam\'{e} constants and where $\mathbb{D}$ again denotes the symmetric gradient, i.e. $\mathbb{D}\left(\mathbf{v}\right):=\frac{\left(\nabla\mathbf{v}+\left(\nabla\mathbf{v}\right)^{T}\right)}{2}$ for all $\mathbf{v}\in \mathbb{R}^{3}$.
We have the following \emph{no-slip} coupling conditions between fluid and structures
\begin{equation}\label{eqn:coupling}
 \begin{cases}
\text{tr}_{\Gamma^{\eta} }\mathbf{u}=\mathbf{u}\left(t,R+\eta,\theta,z\right)=\partial_{t}\eta\left(t,\theta,z\right)\mathbf{e}_{r}\left(\theta\right)\\
\text{tr}_{\Gamma}\mathbf{d}=\mathbf{d}\left(t,R,\theta,z\right)=\eta\left(t,\theta,z\right)\mathbf{e}_{r}\left(\theta\right)
\end{cases}
\end{equation}
for all $\left(\theta,z\right)\in\omega,\ t\in I$
and also the dynamic coupling condition
\begin{equation}\label{eqn:dynamic-coupling}
\partial_{tt}\eta+\Delta^{2}\eta=-J\left(t,z,\theta\right)\left[\left(\sigma\mathbf{n}\right)\circ\phi_{\eta}\right]\cdot\mathbf{e}_{r}\left(\theta\right)+\mathbb{S}\left(t,R,\theta,z\right)\mathbf{e}_{r}\left(\theta\right)\cdot\mathbf{e}_{r}\left(\theta\right)
\end{equation}
where $\mathbf{e}_{r}\left(\theta\right)= 
(\cos\theta, \sin\theta,0)$ is the outer unit normal vector to $\Gamma$ in the radial direction and $\mathbf{n}$ the outer unit normal at $\Gamma^{\eta}(t)$. 
Here we denote
\[
J\left(t,\theta, z\right):=\sqrt{\left(1+\left(\partial_{z}\eta\right)^{2}\right)\left(R+\eta\right)^{2}+\left(\partial_{\theta}\eta\right)^{2}},
\]  the Jacobian of the transformation from Eulerian to Lagrangian coordinates.
At the inlet/outlet boundaries denoted $\Gamma_{in}, \ \Gamma_{out}$  of the fluid domain we assume that the fluid velocity is zero in \emph{tangential direction} and a given dynamic pressure condition. \footnote{Sometimes  called \emph{head pressure} or \emph{Bernoulli pressure} in the literature.}

We denote $\Gamma_{in/out}:=\Gamma_{in} \cup \Gamma_{out}$ and we set
\begin{equation}
   \begin{cases}
p+\frac{\rho_{f}}{2}\left|\mathbf{u}\right|^{2}=P_{\text{in/out}}\left(t\right) & \text{on}\ \Gamma_{in/out}\\
\mathbf{u}
\times \mathbf{e}_{z}=\mathbf{0}
& \text{on}\ \Gamma_{in/out},
\end{cases}
\end{equation}
where $\mathbf{e}_{z}=(0,0,1)$ is the normal direction at $\Gamma_{in/out}$. Hence, $\mathbf{u}=\pm\left(u^{1},0,0\right)$.

Concerning the thin elastic structure, we assume that it is \emph{clamped}\footnote{Note that our definition of clamped shell is more restrictive than the usual $\eta=\partial_{\mathbf{n}}\eta=0$ on $\omega$; compare with \cite{MC15}.} and therefore 
\begin{equation}
\left|\eta\left(t,z,\theta\right)\right|=\left|\nabla\eta\left(t,z,\theta\right)\right|=0\quad\left(z,\theta\right)\in\partial\omega,\ t\in I
\end{equation}
and regarding the thick structure we also assume it is clamped at the inlet/outlet boundary \[
\Gamma_{in}^{s}:=\left(R,R+H\right)\times\left(0,2\pi\right)\times\left\{ 0\right\} ,\quad\Gamma_{out}^{s}:=\left(R,R+H\right)\times\left(0,2\pi\right)\times\left\{ L\right\} 
\] and thus
$
\mathbf{d}\left(t,r,\theta,z\right)=\mathbf{0}
$ on $  I\times\Gamma_{in/out}^{s}$.
At the external (top) boundary of the thick structure (corresponding to $r=R+H$) we assume for simplicity that the thick structure is exposed to an ambient pressure $P_e$, for simplicity set $P_e=0$ and thus we have 
\begin{equation}
\mathbb{S}\mathbf{e}_{r}=-P_{e}\mathbf{e}_{r}=\boldsymbol{0}\quad\text{on}\ \Gamma_{ext}=\left\{ R+H\right\} \times\left(0,2\pi\right)\times\left(0,L\right).
\end{equation}

Finally, let us specify the time-periodic boundary conditions 
\begin{equation}
\begin{cases}
\mathbf{u}\left(0,\cdot\right)=\mathbf{u}\left(T,\cdot\right) & \text{in \ensuremath{\Omega_{F}^{\eta}\left(0\right)}}\\
\eta\left(0,\cdot\right)=\eta\left(T,\cdot\right),\partial_{t}\eta\left(0,\cdot\right)=\partial_{t}\eta\left(T,\cdot\right) & \text{in }\omega\\
\mathbf{d}\left(0,\cdot\right)=\mathbf{d}\left(T,\cdot\right),\ \partial_{t}\mathbf{d}\left(0,\cdot\right)=\partial_{t}\mathbf{d}\left(T,\cdot\right) & \text{in }\Omega_{S}
\end{cases}
\end{equation}
By denoting $I\times\Omega_{F}^{\eta}:=\bigcup_{t\in I}\left\{ t\right\} \times\Omega_{F}^{\eta}\left(t\right)$ and
$\mathbf{F}:=\left(-J\left(t,z,\theta\right)\left[\left(\sigma\mathbf{n}\right)\circ\phi_{\eta}\right]+\mathbb{S}\left(t,R,\theta,z\right)\right)\mathbf{e}_{r}$ the force exerted on the thin elastic structure, the complete fluid-structure interaction problem that we want to study is the following:
\begin{equation}\label{eqn:FSI}
\boxed{\begin{cases}
\left(\partial_{t}\mathbf{u}+\left(\mathbf{u}\cdot\nabla\right)\mathbf{u}\right)=\mathrm{div}\ \sigma & \text{in}\ I\times\Omega_{F}^{\eta},\\
\text{div}\ \mathbf{u}=0 & \text{in}\ I\times\Omega_{F}^{\eta},\\
\frac{1}{2}\left|\mathbf{u}\right|^{2}+p=P_{in/out}\left(t\right),\ \mathbf{u}\times\mathbf{e}_{z}=\mathbf{0} & \text{on}\ I\times\Gamma_{in/out},\\
\partial_{tt}\eta+\Delta^{2}\eta=\mathbf{F}\cdot\mathbf{e}_{r}\left(\theta\right) & \text{in}\ I\times\omega,\\
\text{tr}_{\Gamma^{\eta}}\mathbf{u}=\partial_{t}\eta\mathbf{e}_{r},\ \text{tr}_{\Gamma}\mathbf{d}=\eta\mathbf{e}_{r} & \text{on}\ I\times\omega,\\
\eta=\left|\nabla\eta\right|=0 & \text{on}\ I\times\partial\omega,\\
\mathbf{d}=\mathbf{0} & \text{on}\ I\times\Gamma_{in/out}^{s},\\
\mathbb{S}\mathbf{e}_{r}=\mathbf{0} & \text{on}\ I\times\Gamma_{ext},\\
\mathbf{u}\left(0,\cdot\right)=\mathbf{u}\left(T,\cdot\right) & \text{in}\ \Omega_{\eta}\left(0\right),\\
\eta\left(0,\cdot\right)=\eta\left(T,\cdot\right),\ \partial_{t}\eta\left(0,\cdot\right)=\partial_{t}\eta\left(T,\cdot\right) & \text{in}\ \omega,\\
\mathbf{d}\left(0,\cdot\right)=\mathbf{d}\left(T,\cdot\right),\ \partial_{t}\mathbf{d}\left(0,\cdot\right)=\partial_{t}\mathbf{d}\left(T,\cdot\right) & \text{in}\ \Omega_{S}.
\end{cases}}
\end{equation}

Over the past two decades, fluid-structure interaction (FSI) problems involving moving interfaces have attracted significant attention. Broadly, these models fall into two categories: those where the structure moves within the fluid and those where the structure is situated at the boundary of the fluid domain. As this article focuses on FSI models with structures positioned at the fluid boundary, we will highlight relevant studies from the literature that specifically address this scenario. For a comprehensive review of analytical results and related applications concerning the FSI problems, we refer to the excellent survey \cite{vcanic2021moving} and the references therein. In literature, a lot of focus has been given in studying the Cauchy problem, which involves determining solutions given initial data. The existence of weak solutions for incompressible fluid-elastic plates interaction has been initiated in the works \cite{chambolle2005existence, grandmont2008existence} and has been extended to incompressible fluid-linear elastic Koiter shell interaction in \cite{LR14, lengeler2014weak}. Subsequent advancements were achieved in \cite{MS22}, where the existence of solutions for the nonlinear Koiter shell model was demonstrated. In \cite{MC13}, the authors provided constructive existence results for weak solutions to incompressible fluid and linear elastic shell interaction problems, utilizing Arbitrary-Lagrangian-Eulerian (ALE) methods. The analysis was extended in \cite{MC15} to account for nonlinear cylindrical Koiter shells, allowing for displacements that are not restricted to radial symmetry. Additionally, in \cite{Boris}, the study was expanded to address fluid-structure interaction involving two structural layers--a thick layer coupled with a thin layer.

The pioneering work on the local-in-time existence and uniqueness of strong solutions for fluid-viscoelastic beam interactions with small initial data was first presented in \cite{beirao2004existence}. This result was extended in \cite{grandmont2019existence} to purely elastic beams. A significant breakthrough came in \cite{grandmont2019existence}, where the authors demonstrated the global-in-time existence and uniqueness of solutions for arbitrary initial data in a 2D/1D framework, focusing on viscoelastic beam dynamics. Nonlinear elastic Koiter shell models were rigorously analyzed in \cite{cheng2010interaction, maity2020maximal}, pushing the boundaries of structural interaction theory. Recent strides have also been made in exploring complex interactions involving compressible and heat-conducting fluids coupled with beams and shells, as highlighted in \cite{BS18, BS21, maity2021existence, macha2022existence}, marking important progress in this rapidly evolving field.

The study of time-periodic solutions holds significant importance, particularly in biomedical contexts, where many critical fluid flows, such as blood flow, are driven by the heart's periodic pumping action. This periodic nature mirrors the cyclical behavior observed in various physiological processes, making the analysis of time-periodic models highly relevant for understanding cardiovascular dynamics and related phenomena. Despite its importance, the body of literature addressing time-periodic fluid-structure interaction problems remains relatively limited. This scarcity is largely attributed to the mathematical challenges involved, with the primary difficulty arising from the need to establish \emph{uniform-in-time} energy estimates. Achieving such estimates is crucial for proving the long-term stability and boundedness of solutions over successive cycles, which can be particularly demanding in the context of nonlinear and coupled systems.

These challenges have been acknowledged and addressed in several notable works. In the context of incompressible fluid interactions with elastic plates and shells, key contributions have been made in \cite{Casanova} (focusing on strong solutions), as well as in \cite{Claudiu22} and \cite{Claudiu23}, where the problem of time-periodic interactions was rigorously studied. For compressible fluids, progress has been made in \cite{kreml}, extending the analysis to account for the complexities introduced by variable fluid density. Further insights into time-periodic behavior in fluid-structure interaction problems can be found in \cite{juodagalvyte2020time} and \cite{panasenko2024multiscale}, where multiscale methods and advanced mathematical frameworks are employed to explore periodic dynamics across different structural and fluid models.

In this work, we investigate the interaction between incompressible viscous fluids and multilayered elastic structures. The problem is set within a 3D/2D/3D framework, representing a 3D incompressible fluid interacting with a 2D thin elastic structure, which in turn is coupled to a 3D thick elastic solid. The dynamics of the system are driven by prescribed inflow and outflow boundary conditions involving the Bernoulli pressure, expressed as $$\frac{1}{2}\left|\mathbf{u}\right|^{2}+p=:P_{in/out}\left(t\right),$$ where 
$p$ denotes the fluid pressure and 
$\mathbf{u}$ the velocity field. We establish that if the prescribed boundary pressure $P_{in/out}$ is time-periodic with sufficiently small $L^2$
 -norm, at least one time-periodic weak solution exists. In contrast to the results presented in \cite{Boris}, our analysis necessitates the additional assumption that the thick elastic structure exhibits viscoelastic properties. This assumption is not only practically reasonable but also crucial for obtaining diffusion estimates, which play a key role in deriving energy bounds and ensuring the stability of solutions. However, if the thick solid is assumed to be purely elastic, our methodology still yields the existence of weak solutions, albeit for the initial-value problem rather than the time-periodic case. This result follows as a direct corollary of our main approach. To the best of our knowledge, this outcome is novel, as the work in \cite{Boris} primarily addressed fluid-structure interaction in a 2D/1D/2D multilayered setting, and did not extend to the 3D/2D/3D case considered here. Our findings contribute to the broader understanding of multilayered fluid-structure interaction models, bridging the gap between theoretical analysis and practical scenarios involving complex elastic structures.

In terms of methodology, we adopt an approach that relies on decoupling the fluid-structure interaction problem and constructing solutions through the Galerkin method. This strategy allows us to handle the complexities arising from the multilayered nature of the system by addressing each component separately before integrating them into a cohesive solution framework. A key element of our approach is the extension operator introduced in Proposition \ref{prop:estimates-extension-operator}. This operator proves to be a powerful and versatile tool, facilitating the derivation of essential \emph{a priori estimates}. These estimates play a crucial role in controlling the behavior of the coupled system, ensuring stability and boundedness of the solutions. Additionally, the extension operator aids in establishing the  $L^2$-compactness of the fluid velocity field, even within the challenging context of multilayered fluid-structure interactions.  

The robustness of this extension operator is particularly valuable when dealing with the interaction between thin and thick elastic layers, as it provides the necessary analytical framework to manage the discontinuities and variations across different structural components. By leveraging this tool, we can effectively overcome the difficulties posed by the multilayered environment, enhancing the overall stability and convergence of the Galerkin approximation process. This methodological approach not only streamlines the solution construction but also strengthens the analytical foundations required to address the existence of solutions, both for the initial-value problem and for the time-periodic setting under small periodic boundary data conditions.

\begin{remark}
   We consider the visco-elastic in \eqref{eqn:lame-stress}   to obtain the diffusion estimate \eqref{eqn:diffusion-estimate} which is crucial to the existence of \emph{time-periodic} problems. Hence $\delta$ can be small but positive. For simplicity we set $\delta=1$. For the case of an initial-value problem we may relax the assumptions to allow also the purely elastic case that is  $\delta \ge 0$. Alternatively, the additional term $\delta_{1} (\partial_{t} \mathbf{d}) \mathbb{I}_{3}$ with $\delta_1>0$ in \eqref{eqn:lame-stress} would suffice to regularize the displacement of the structure and to establish the estimate~\eqref{eqn:diffusion-estimate}.  The regularizing efect of visco-elasticity (and of the viscous effects in general)  was highlighted in many works and it is also related to the phenomenon of \emph{resonance} see \cite{galdi2014hyperbolic} and also the more recent works \cite{benson2024resonance} and \cite{mosny2024time}.
   
\end{remark}
\begin{remark}
    It seems possible that the inflow/outflow boundary condition
    that we use (see  also \cite{CMP94}) might be replaced by a Dirichlet condition in normal direction or a net-flux condition. We refer to \cite{BaSt22} for further details.
\end{remark}
\begin{remark}
    Since the problem that we study can be easily related to the flow of blood through an artery, it is usual to model the blood as a non-Newtonian fluid of Carreau type; this means updating the Cauchy stress tensor to $\tilde{\sigma}:=\nu_{\infty}\mathbb{D}\mathbf{u}+\nu\left(\mu^{2}+\left|\mathbb{D}\mathbf{u}\right|^{\frac{p-2}{2}}\right)-p\mathbb{I}$ for $\nu_{\infty}, \ \mu \ge 0$, $\nu>0$ and $p>2$. Including this model seems to be  possible in the light of \cite{Lengeler16}.
\end{remark}
\paragraph{Plan of the paper.}
Section \ref{section:weak-soln} begins by defining the concept of weak solutions for the fluid-structure interaction system described by \eqref{eqn:FSI}. This section also presents the primary results of the paper, outlining the key theorem and corollary that form the foundation of our analysis. In Section \ref{sec:formal-a-priori}, we introduce the construction of a divergence-free extension operator, a critical tool for addressing the coupling between fluid and solid dynamics. Following this, we derive the system’s energy balance, providing the necessary a priori estimates that are instrumental for the subsequent analysis.
Section \ref{sec:compactness} focuses on establishing the $L^2-$ compactness of the fluid velocity, a fundamental step in demonstrating the convergence of approximate solutions and ensuring the stability of the multilayered interaction model. The heart of the paper lies in Section \ref{sec:main-construction}, where we present the detailed proofs of the main results. This section synthesizes the preparatory work from earlier sections into a cohesive argument, ultimately confirming the existence of weak solutions to the system. For completeness and to enhance the readability of the paper, we have included Section \ref{sec:appendix}, which lists auxiliary results and technical tools used throughout the analysis.

\section{Weak formulation and main results}\label{section:weak-soln}
Motivated by the energy bound \eqref{eqn:formal-en-bound}, we introduce the function spaces for the unknowns as follows:
\begin{equation}\label{eqn:space-energy}\begin{aligned}H_{fl}^{\eta}\left(t\right) &:= \left\{ \mathbf{u}\in H^{1}\left(\Omega_{F}^{\eta}\left(t\right);\mathbb{R}^{3}\right):\text{div}\ \mathbf{u}=0,\ \mathbf{u}\cdot\mathbf{e}_{\theta}=\mathbf{u}\cdot\mathbf{e}_{z}=\mathbf{0}\ \text{on}\ \Gamma_{\eta}\left(t\right),\right.\\
 & \left.\ \qquad \mathbf{u}\times\mathbf{e}_{z}=\mathbf{0}
\ \text{on}\ \Gamma_{in/out}\right\}, \\
H_{solid} & :=  \left\{ \mathbf{d}\in H^{1}\left(\Omega_{S};\mathbb{R}^{3}\right):\mathbf{d}\left(R,\cdot\right)\cdot\mathbf{e}_{\theta}=\mathbf{d}\left(R,\cdot\right)\cdot\mathbf{e}_{z}=\mathbf{0},\ \mathbf{d}=\mathbf{0}\ \text{on}\ \Gamma_{in/out}^{s}\right\}, \\
V_{fl}^{\eta} & :=L_{per}^{\infty}\left(I;L^{2}\left(\Omega_{F}^{\eta}\left(t\right)\right)\right)\cap L_{per}^{2}\left(I;H_{fl}^{\eta}\left(t\right)\right),\\
V_{w} & :=W_{per}^{1,\infty}\left(I;L^{2}\left(\omega\right)\right)\cap L_{per}^{2}\left(I;H_{0}^{2}\left(\omega\right)\right),\\
V_{s} & :=W_{per}^{1,\infty}\left(I;L^{2}\left(\Omega_{S}\right)\right)\cap W_{per}^{1,2}\left(I;W^{1,2}\left(\Omega_{S}\right)\right)\cap L_{per}^{2}\left(I;H_{solid}\right),
\end{aligned}
\end{equation}
where for any Banach space $X$ we define the spaces $L^{p}_{per}\left(I;X\right)$ and $W^{k,p}_{per}(I;X)$ as the closure in the corresponding norms of the smooth periodic functions
\[\left\{ \varphi:\varphi\in C^{\infty}\left(\mathbb{R};X\right):\varphi\left(t+T\right)=\varphi\left(t\right),\ \forall\  t\in\mathbb{R}\right\}. \] 

Then we may define the function spaces corresponding to  the solutions and  the test functions respectively as

\begin{equation}\label{eqn:spaces-solns-testfunctions}
    \begin{aligned}V_{soln}^{\eta}:= & \left\{ \left(\mathbf{u},\eta,\mathbf{d}\right)\in V_{fl}^{\eta}\times V_{w}\times V_{s}:\text{tr}_{\Gamma^{\eta}\left(t\right)}\mathbf{u}=\partial_{t}\eta\left(t,z\right)\mathbf{e}_{r},\right.
\left.\mathbf{d}\left(t,R,\theta,z\right)=\eta\left(t,z\right)\mathbf{e}_{r}\right\}, \\
V_{test}^{\eta}:= & \left\{ \left(\mathbf{q},\xi,\boldsymbol{\xi}\right)\in C_{\text{per}}^{1}\left(I;H_{fl}\times V_{w}\times H_{solid}\right):\right.
\left.\text{tr}_{\Gamma^{\eta}\left(t\right)}\mathbf{q}=\xi\left(t,z\right)\mathbf{e}_{r}=\boldsymbol{\xi}\left(t,R,\theta,z\right)\right\}. 
\end{aligned}
\end{equation}
\subsection{Weak formulation}\label{ssec:deriving-weak-form}
Let us assume for a moment that all the involved quantities are smooth.
First, let us observe that the fluid domains $\Omega_{F}(t)$ for $t\in I$ is well defined provided that $R+\eta\left(t,\theta,z\right)>0$ for all $(\theta,z)\in \omega$. Since $\left\Vert \eta\right\Vert _{L_{t,x}^{\infty}}\lesssim\sup_{t\in I}E\left(t\right)\le M$ we can guarantee the existence of fluid-domains provided that we choose a sufficiently small $M$, that ensures $\left\Vert \eta\right\Vert _{L_{t,x}^{\infty}}<R$.  

We multiply the fluid equation \eqref{eqn:fluid} by $\mathbf{q}$, the elastic equation \eqref{eqn:wave} by $\xi$ and the solid equation~\eqref{eqn:thick} by $\boldsymbol{\xi}$ which are coupled throughout the condition $\text{tr}_{\Gamma\left(t\right)}\mathbf{q}=\xi\left(t,z\right)\mathbf{e}_{r}=\boldsymbol{\xi}\left(t,R,\theta,z\right)$.
After performing integration by parts, using the divergence free condition and the coupling condition we arrive at the following
\begin{definition}
    We say a couple $\left(\mathbf{u},\eta,\mathbf{d}\right)\in V_{soln}^{\eta}$  with $\left\Vert \eta\right\Vert _{L_{t,x}^{\infty}} <R$ is a \emph{time-periodic weak solution} to the FSI problem \eqref{eqn:FSI}, if for all $\left(\mathbf{q},\xi,\boldsymbol{\xi}\right)\in V_{test}^{\eta}$ the following weak formulation holds: 
    \begin{equation}\label{eqn:weak-form}
        \begin{aligned}\int_{I}\int_{\Omega_{F}^{\eta}\left(t\right)}-\mathbf{u}\cdot\partial_{t}\mathbf{q}+\nabla\mathbf{u}:\nabla\mathbf{q}\ dx \ dt+\int_{I}b\left(t,\mathbf{u},\mathbf{u},\mathbf{q}\right)dt +
\int_{I}\int_{\omega}-\frac{1}{2}\left(\partial_{t}\eta\right)^{2}\left(R+\eta\right)\xi -\partial_{t}\eta\partial_{t}\xi\ dA\ dt & \\+\int_{I}K\left(\eta,\xi\right)\ dt  +
\int_{I}\int_{\Omega_{S}}-\partial_{t}\mathbf{d}\cdot\partial_{t}\boldsymbol{\xi}\ dx\ dt+\int_{I}a_{S}\left(\mathbf{d},\boldsymbol{\xi}\right)\ dt =
\int_{I}\left\langle F\left(t\right),\mathbf{q}\right\rangle _{\Gamma_{in/out}}\ dt .
\end{aligned}
    \end{equation}
\end{definition}
Here, we denote
\begin{equation}\label{eqn:functionals}
\begin{aligned}b\left(t,\mathbf{u},\mathbf{v},\mathbf{w}\right):= & \frac{1}{2}\int_{\Omega_{F}^{\eta}\left(t\right)}\left(\mathbf{u}\cdot\nabla\right)\mathbf{v}\cdot\mathbf{w}\ dx-\frac{1}{2}\left(\mathbf{u}\cdot\nabla\right)\mathbf{w}\cdot\mathbf{v}\ dx,\\
K\left(\eta,\xi\right):= & \int_{\omega}\nabla^{2}\eta:\nabla^{2}\xi\ dA,\\
a_{S}\left(\mathbf{d},\boldsymbol{\xi}\right):= & \int_{\Omega_{S}}\nabla\mathbf{d}:\nabla\boldsymbol{\xi}+\nabla\partial_{t}\mathbf{d}:\nabla\boldsymbol{\xi}+\left(\text{div}\mathbf{d}\right)\left(\text{div}\boldsymbol{\xi}\right)\ dx,\\
\left\langle F\left(t\right),\mathbf{v}\right\rangle _{\Gamma_{in/out}}:= & P_{\text{in}}\left(t\right)\int_{\Gamma_{in}}\mathbf{v}\cdot\mathbf{n}\ dA-P_{out}\left(t\right)\int_{\Gamma_{out}}\mathbf{v}\cdot\mathbf{n}\ dA .
\end{aligned}
\end{equation}
\subsection{Main results}
Having collected the preliminary material, we are ready to formulate our main result.
\begin{theorem}\label{thm:main}
If $P_{in/out}\in L^{2}_{per}\left(I\right)$ is time-periodic with $\left\Vert P_{in/out}\right\Vert _{L_{t}^{2}}\le C_0$ for a constant $C_0=C_0(\texttt{data)}$,
 then there exists at least one time-periodic weak solution $\left(\mathbf{u},\eta,\mathbf{d}\right)\in V_{soln}^{\eta}$ to \eqref{eqn:FSI}. 
Furthermore, we have
\begin{equation}
    \sup_{t\in I}E\left(t\right)+\int_{I}D\left(t\right)dt\le C_{0}.
\end{equation}

\end{theorem}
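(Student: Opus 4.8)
\emph{Proof sketch.} The argument has three stages: an approximation scheme (decoupling plus Galerkin truncation), the derivation of \emph{uniform-in-time} a priori bounds, and the construction of a periodic solution by a fixed-point/continuation argument followed by passage to the limit. \textbf{Approximation.} After transferring the fluid equations to the reference cylinder $\Omega_F$ by the ALE diffeomorphism $\psi_\eta$ and using the divergence-free extension operator of Proposition~\ref{prop:estimates-extension-operator}, the system decouples into a fluid subproblem on $\Omega_F^\eta(t)$ (driven by the kinematic data $\mathrm{tr}_{\Gamma^\eta}\mathbf u=\partial_t\eta\,\mathbf e_r$), the thin Koiter shell equation for $\eta$, and the thick viscoelastic solid equation for $\mathbf d$, coupled through traces on $\Gamma$ and $\Gamma_{in/out}$. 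Choosing Galerkin bases of $H_{fl}^\eta$ (transported from $\Omega_F$), $H^2_0(\omega)$ and $H_{solid}$ compatible with the coupling and projecting \eqref{eqn:weak-form} onto them, level $n$ becomes a finite system of ODEs with quadratic nonlinearity (from the convective form $b$ and the geometric interface terms). For the finite-dimensional \emph{periodic} problem we run a Leray--Schauder continuation in the amplitude of $P_{in/out}$: the homogeneous problem has only the trivial periodic solution (the damped, forcing-free energy identity forces $D\equiv 0$, hence $\mathbf u\equiv 0$ and $\partial_t\mathbf d\equiv 0$, and then a static-elasticity argument gives $\eta\equiv 0$, $\mathbf d\equiv 0$), and the a priori bound below makes the continuation uniform; this yields a periodic Galerkin solution $(\mathbf u_n,\eta_n,\mathbf d_n)$.

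\textbf{Uniform a priori estimate.} This is the core of the theorem. Testing the level-$n$ weak formulation with the solution itself, the antisymmetric form $b$ vanishes, $a_S$ contributes $\tfrac{d}{dt}$ of the solid elastic energy plus the \emph{viscoelastic dissipation} $\|\nabla\partial_t\mathbf d_n\|_{L^2}^2$, and the remaining terms reassemble, as in \eqref{eqn:formal-en-bound}, into $\tfrac{d}{dt}E_n(t)+D_n(t)=\langle F(t),\mathbf u_n(t)\rangle_{\Gamma_{in/out}}$. The boundary work is bounded by $C\|P_{in/out}(t)\|_{L^2}\,D_n(t)^{1/2}$, using the trace theorem and a Poincaré/Korn inequality with constants uniform over the mildly deformed geometries (again via the extension operator), noting that the viscoelastic dissipation controls all velocities ($\|\nabla\partial_t\mathbf d_n\|_{L^2}^2\gtrsim\|\partial_t\mathbf d_n\|_{H^1}^2\gtrsim\|\partial_t\eta_n\|_{L^2(\omega)}^2$ through the coupling, and $\|\nabla\mathbf u_n\|_{L^2}^2$ controls $\|\mathbf u_n\|_{H^1}^2$). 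Integrating over $I$, the periodicity of $E_n$ kills $\int_I\tfrac{d}{dt}E_n$, and absorbing gives $\int_I D_n\lesssim\|P_{in/out}\|_{L^2_t}^2$. To control the \emph{elastic} part of $E_n$, which is not directly dissipated (the thin shell carries no damping), we test the structure equations with the displacements $(\eta_n,\mathbf d_n)$ themselves, the fluid contribution entering through a divergence-free extension of $\eta_n\mathbf e_r$ furnished once more by Proposition~\ref{prop:estimates-extension-operator}; this yields $\int_I\big(\|\nabla^2\eta_n\|_{L^2}^2+\|\nabla\mathbf d_n\|_{L^2}^2\big)\lesssim\int_I D_n+\text{(lower order)}$, hence $\int_I E_n\lesssim\|P_{in/out}\|_{L^2_t}^2$ after absorption under the smallness of $\|P_{in/out}\|_{L^2}$. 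Finally $E_n(t)\le\tfrac1T\int_I E_n+\int_I|E_n'|\lesssim\|P_{in/out}\|_{L^2_t}^2$, so for $C_0$ small we obtain $\sup_t E_n+\int_I D_n\le C_0$; in particular $\|\eta_n\|_{L^\infty_{t,x}}<R$, keeping the approximate domains non-degenerate. The indispensability of the viscoelasticity \eqref{eqn:lame-stress} is visible here: without the diffusion estimate \eqref{eqn:diffusion-estimate} neither could the boundary work be absorbed nor the energy be controlled over a full period.

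\textbf{Compactness and limit.} The uniform bounds give, along a subsequence, $\mathbf u_n\rightharpoonup\mathbf u$ in $L^2_{per}(I;H^1)$ (in the pulled-back sense), $\eta_n\rightharpoonup\eta$ in $V_w$, $\mathbf d_n\rightharpoonup\mathbf d$ in $V_s$; the compact embedding of $V_w$ into $C(\overline{I\times\omega})$ yields $\eta_n\to\eta$ uniformly, so the domains $\Omega_F^{\eta_n}(t)$ and the ALE maps converge. The decisive step is the strong $L^2$-convergence of the fluid velocity on the moving domains, proved in Section~\ref{sec:compactness} by transferring to a fixed domain via the extension operator and running an Aubin--Lions/Simon argument adapted to the time-dependent geometry, together with an equicontinuity-in-time estimate obtained by testing with divergence-free functions after correcting the $\eta$-dependence of the test space. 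With $\mathbf u_n\to\mathbf u$ and $\partial_t\eta_n\to\partial_t\eta$ strongly in $L^2$ and $\eta_n\to\eta$ uniformly, one passes to the limit in the nonlinear terms — $b(t,\mathbf u_n,\mathbf u_n,\mathbf q)$, the interface term $\tfrac12(\partial_t\eta_n)^2(R+\eta_n)\xi$ and the Bernoulli term $\tfrac12|\mathbf u_n|^2$ on $\Gamma_{in/out}$ — the linear terms passing by weak convergence and strong trace convergence preserving the coupling conditions of $V_{soln}^\eta$. Since every approximation lies in the weakly closed periodic spaces $L^p_{per},W^{k,p}_{per}$, the limit is time-periodic, and weak lower semicontinuity gives $\sup_t E(t)+\int_I D(t)\,dt\le C_0$. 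The full details are carried out in Section~\ref{sec:main-construction}.

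\textbf{Main obstacle.} The two genuinely hard points are (i) the uniform-in-time energy estimate: lacking Grönwall, the periodic bound rests entirely on the coercivity of the viscoelastic dissipation of the thick solid, the extra ``test with the displacement'' estimates needed for the undamped elastic energies, and the smallness of $\|P_{in/out}\|_{L^2}$ to absorb the quadratic boundary work; and (ii) the $L^2$-compactness of $\mathbf u$ on a non-cylindrical, weakly controlled moving domain whose divergence-free test spaces themselves depend on the unknown $\eta$ — which is precisely where the extension operator of Proposition~\ref{prop:estimates-extension-operator} does the heavy lifting. Both are the structural reasons the theorem requires viscoelasticity of the thick solid together with small boundary data.
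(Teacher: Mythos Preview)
Your energy estimates and identification of the key obstacles are on the mark and match the paper's Section~\ref{sec:formal-a-priori}: the diffusion estimate from periodicity, the ``test with the displacement'' trick via $\mathcal{F}_\eta(\eta)$ to control the undamped elastic energies, and the role of the viscoelastic term are exactly what the paper does. However, your proof \emph{architecture} differs from the paper's in two substantial ways.

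First, the paper does \emph{not} pull back to the reference cylinder via ALE and run Galerkin on the fully coupled problem. Instead it \emph{decouples}: the fluid domain is prescribed by an externally given $\delta$ and the convective term is linearized around a given $\mathbf v$, then regularized via $\mathcal R_\varepsilon$. The Galerkin basis $(\mathbf X_k^F,X_k,\mathbf X_k^S)$ is built on $\Omega_F^{\mathcal R_\varepsilon\delta}(t)$ using the Piola transform $\mathcal J_\delta$ and the extension $\mathcal F_\delta$, so the basis is well-defined because $\delta$ is \emph{fixed}. After obtaining a periodic solution of this linear decoupled problem, the paper applies the Kakutani--Glicksberg--Fan \emph{set-valued} fixed-point theorem (Theorem~\ref{thm:kakutani}) to the map $(\delta,\mathbf v)\mapsto\{(\eta,\mathbf u)\}$ to recover the coupling, and finally lets $\varepsilon\to0$. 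Your single-layer scheme (ALE $+$ direct Galerkin on the coupled nonlinear problem) is a different route; it can be made to work in principle, but you should be aware that it shifts the difficulty of a domain depending on the unknown into geometric nonlinearities in the ODE coefficients, and you would still need to explain why the Galerkin projections respect the coupling conditions --- the paper's careful basis construction \eqref{eqn:XK-defn} is precisely designed for this.

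Second, for periodicity at the Galerkin level the paper does not run a homotopy in the amplitude of $P_{in/out}$ and does not invoke uniqueness of the trivial unforced solution. It applies Leray--Schauder to the \emph{Poincar\'e map} $P_n:(\mathbf a_n(0),\mathbf a_n'(0))\mapsto(\mathbf a_n(T),\mathbf a_n'(T))$; the boundedness of $\{x=\lambda P_n(x)\}$ follows from the elegant observation $E_n(0)=\lambda^2E_n(T)\le E_n(T)$, which immediately feeds into the diffusion estimate \eqref{eqn:diff-Dn}. Your continuation-in-forcing argument is plausible, but the paper's device is cleaner and avoids having to verify nondegeneracy/uniqueness at $\lambda=0$.
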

\begin{remark}
     The statement of the main results do of course depends on given quantities such as fluid density $\rho_F$, viscosity $\mu$, thickness $H$ of the thick solid, Lam\'{e} coefficients of elasticity and so on. While these parameters are fixed, we shall generically call name $\texttt{data}$. This is why the main results are formulated up to a constant $C=C(\texttt{data})$.
\end{remark}
Concerning the initial-value problem our methodology allows to include the existence in a 3D/2D/3D setting which seems to have not been covered in \cite{Boris}. This extension highlights the adaptability and robustness of our approach. In the simpler yet widely studied 2D/1D/2D configuration, where the fluid and solid domains are reduced by one dimension, our results align with and recover the findings presented in \cite{Boris}. This consistency not only validates our methodology but also demonstrates its capacity to generalize known results to more complex and realistic settings.   Let us consider 
\begin{equation}\label{eqn:FSIini}
\boxed{\begin{cases}
\left(\partial_{t}\mathbf{u}+\left(\mathbf{u}\cdot\nabla\right)\mathbf{u}\right)=\mathrm{div}\ \sigma & \text{in}\ I\times\Omega_{F}^{\eta},\\
\text{div}\ \mathbf{u}=0 & \text{in}\ I\times\Omega_{F}^{\eta},\\
\frac{1}{2}\left|\mathbf{u}\right|^{2}+p=P_{in/out}\left(t\right),\ \mathbf{u}\times\mathbf{e}_{z}=\mathbf{0} & \text{on}\ I\times\Gamma_{in/out},\\
\partial_{tt}\eta+\Delta^{2}\eta=\mathbf{F}\cdot\mathbf{e}_{r}\left(\theta\right) & \text{in}\ I\times\omega,\\
\text{tr}_{\Gamma^{\eta}}\mathbf{u}=\partial_{t}\eta\mathbf{e}_{r},\ \text{tr}_{\Gamma}\mathbf{d}=\eta\mathbf{e}_{r} & \text{on}\ I\times\omega,\\
\eta=\left|\nabla\eta\right|=0 & \text{on}\ I\times\partial\omega,\\
\mathbf{d}=\mathbf{0} & \text{on}\ I\times\Gamma_{in/out}^{s},\\
\mathbb{S}\mathbf{e}_{r}=\mathbf{0} & \text{on}\ I\times\Gamma_{ext},\\
\mathbf{u}\left(0,\cdot\right)=\mathbf{u}_0 & \text{in}\ \Omega_{\eta}\left(0\right),\\
\eta\left(0,\cdot\right)=\eta_0,\ \partial_{t}\eta\left(0,\cdot\right)=\eta_1 & \text{in}\ \omega,\\
\mathbf{d}\left(0,\cdot\right)=\mathbf{d}_0,\ \partial_{t}\mathbf{d}\left(0,\cdot\right)=\mathbf{d}_1 & \text{in}\ \Omega_{S}.
\end{cases}}
\end{equation}
The corresponding result for \eqref{eqn:FSIini} reads as follows:
\begin{corollary}\label{thm-corollary}
Given an initial data $\left(\mathbf{u}_{0},\eta_{0},\eta_{1},\mathbf{d}_{0},\mathbf{d}_{1}\right)$ with finite energy $E\left(0\right)<\infty$, there exists a time $0<T_{\max}\le \infty$ for which the FSI-problem \eqref{eqn:FSIini} has at least one weak-solution in $(0,T_{\max})$. 
\end{corollary}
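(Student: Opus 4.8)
The plan is to reuse, almost verbatim, the construction of Section~\ref{sec:main-construction} carried out for Theorem~\ref{thm:main}: decouple \eqref{eqn:FSIini} into a fluid sub-problem posed on the instantaneous geometry and a structure sub-problem for the pair $(\eta,\mathbf{d})$, build a Galerkin approximation whose fluid test fields are produced by the divergence-free extension operator of Proposition~\ref{prop:estimates-extension-operator} so that the kinematic coupling $\mathrm{tr}_{\Gamma^{\eta}}\mathbf{u}=\partial_{t}\eta\,\mathbf{e}_{r}=\mathbf{d}(\cdot,R,\cdot,\cdot)\cdot\mathbf{e}_{r}$ holds at the discrete level, and then pass to the limit. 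Two simplifications are available in the initial-value setting. First, the time-periodicity constraint is dropped and replaced by the prescribed data $(\mathbf{u}_{0},\eta_{0},\eta_{1},\mathbf{d}_{0},\mathbf{d}_{1})$, so there is no Poincar\'e (period) map to iterate and hence \emph{no uniform-in-time energy estimate is required}. Second, on a finite time interval the diffusion estimate \eqref{eqn:diffusion-estimate} is not needed, so the thick solid may be taken purely elastic ($\delta\ge 0$); this is the only place where the proof of Theorem~\ref{thm:main} genuinely used viscoelasticity of $\mathbb{S}$.

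First I would fix the non-degeneracy window. Since $E(0)<\infty$, testing the discrete fluid equation with $\mathbf{u}$, the shell equation with $\partial_{t}\eta$ and the thick-solid equation with $\partial_{t}\mathbf{d}$ --- the interface integrals cancelling by the built-in coupling --- and estimating the Bernoulli work $\langle F(t),\mathbf{u}\rangle_{\Gamma_{in/out}}$ by the trace and Young inequalities yields, after a Gr\"onwall argument on $[0,T]$, a bound on $\sup_{[0,T]}E(t)+\int_{0}^{T}D(t)\,dt$ depending only on $E(0)$, $\|P_{in/out}\|_{L^{2}(0,T)}$ and \texttt{data}; on a finite interval this requires \emph{no} smallness of $P_{in/out}$. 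A standard continuation/bootstrap argument then produces some $T>0$, depending on $E(0)$ and $R$, on which $\|\eta(t)\|_{L^{\infty}(\omega)}<R$, so that $\psi_{\eta}$ and the fluid domains $\Omega_{F}^{\eta}(t)$ remain well defined; I would set $T_{\max}$ to be the supremum of all such $T$, so that $T_{\max}=+\infty$ or $\|\eta(t)\|_{L^{\infty}}\to R$ as $t\uparrow T_{\max}$.

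On $[0,T]$ these estimates give $\mathbf{u}\in L^{\infty}_{t}L^{2}_{x}\cap L^{2}_{t}H^{1}_{x}$, $\eta\in L^{\infty}_{t}H^{2}(\omega)\cap W^{1,\infty}_{t}L^{2}(\omega)$ and $\mathbf{d}\in L^{\infty}_{t}H^{1}(\Omega_{S})\cap W^{1,\infty}_{t}L^{2}(\Omega_{S})$, with, for $\delta=0$, \emph{no} control of $\nabla\partial_{t}\mathbf{d}$. To pass to the limit: the elastic terms $K(\eta,\xi)$ and $a_{S}(\mathbf{d},\boldsymbol{\xi})$ (for $\delta=0$ the latter is $\int_{\Omega_{S}}\nabla\mathbf{d}:\nabla\boldsymbol{\xi}+(\mathrm{div}\,\mathbf{d})(\mathrm{div}\,\boldsymbol{\xi})$) are linear and pass with the weak-$*$ convergences above; Aubin--Lions--Simon gives $\mathbf{d}\to\mathbf{d}$ strongly in $C_{t}L^{2}_{x}$ and $\eta\to\eta$ in $C_{t}H^{s}(\omega)$ for $s<2$, which controls the $\partial_{t}\eta$- and geometry-dependent terms of the shell block; and the genuinely nonlinear terms --- the convective term $b(t,\mathbf{u},\mathbf{u},\mathbf{q})$ and the kinetic term $\frac{1}{2}(\partial_{t}\eta)^{2}(R+\eta)\xi$ --- are closed using the $L^{2}$-compactness of $\mathbf{u}$ on the moving domains from Section~\ref{sec:compactness} and the induced strong convergence of the trace $\partial_{t}\eta=\mathrm{tr}_{\Gamma^{\eta}}\mathbf{u}\cdot\mathbf{e}_{r}$. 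It then remains to check that the limit $(\mathbf{u},\eta,\mathbf{d})$ satisfies the weak formulation \eqref{eqn:weak-form} with the right-hand side augmented by the initial data and lies in the natural non-periodic counterpart of $V_{soln}^{\eta}$.

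The hard part --- or rather the point needing care, since the heavy lifting is already done for the more delicate periodic problem --- will be to confirm that nothing in Section~\ref{sec:compactness} or in the limit passage secretly relied on time-periodicity or on the diffusion estimate \eqref{eqn:diffusion-estimate}: the thick-solid compactness needed here must come purely from the hyperbolic bounds $\mathbf{d}\in L^{\infty}_{t}H^{1}\cap W^{1,\infty}_{t}L^{2}$, and the identification in the limit of the trace of $\partial_{t}\mathbf{d}$ on $\Gamma$ with $\partial_{t}\eta\,\mathbf{e}_{r}$ must be deduced from the kinematic constraint together with the (already available) strong convergence of $\partial_{t}\eta$, not from compactness of $\partial_{t}\mathbf{d}$ itself. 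Once this is verified, the 3D/2D/3D case goes through exactly as the 2D/1D/2D case of \cite{Boris}, the dimensional difference being absorbed into Proposition~\ref{prop:estimates-extension-operator} and the Sobolev embeddings of Section~\ref{sec:compactness}.
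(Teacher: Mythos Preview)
Your proposal is correct and follows essentially the same approach as the paper: reuse the construction of Section~\ref{sec:main-construction}, drop the periodicity step of Subsection~\ref{ssec:periodicity}, replace the uniform-in-time estimates by a Gr\"onwall argument on a finite interval, and restrict to $(0,T_{\max})$ so that the geometry stays non-degenerate. The paper's own proof is a three-line sketch that defers the details to \cite{LR14}; your outline is considerably more explicit, in particular about why no smallness of $P_{in/out}$ is needed, why the viscoelastic term ($\delta>0$) can be dropped, and what exactly must be checked in the compactness argument of Section~\ref{sec:compactness} when $\nabla\partial_t\mathbf{d}$ is no longer controlled --- points the paper does not spell out.
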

\section{Formal a-priori estimates}\label{sec:formal-a-priori}
In order to obtain suitable a-priori estimates we need to construct a suitable extension operator, resembling \cite{LR14}, \cite{Claudiu23}.
\subsection{A divergence-free extension operator}
The aim of this operator is to create test-functions by a suitable extension of the test-functions for the thin elastic structure. Let us note that in general, given a motion $\delta\in C^{\infty}\left(I\times\omega;\mathbb{R}\right)$ we can speak of a new fluid domain $\Omega_{F}^{\delta}(t)$, simply obtained by replacing $\eta=\delta$ in the definition of $\Omega_{F}^{\eta}(t)$. For this, let us suppose that 
\[
\frac{R}{2}\le\delta\left(t,\theta,z\right)\le R+\frac{H}{2}\quad\forall t\in I,\ \left(\theta,z\right)\in\omega
\]

To be more precise:
Let  $\xi:\omega\mapsto\mathbb{R}$ be  smooth and compactly supported; we aim to extend it to a divergence free function to the fluid domain $\Omega_{F}^{\delta}\left(t\right)$  as follows.

First we consider (for $x,y,z$ in Cartesian coordinates)
\[
\overline{\mathcal{F}}_{\delta}\left(\xi\right)\left(t,x,y,z\right):=\frac{R+\delta\left(t,\theta,z\right)}{r}\mathbf{e}_{r}\left(\theta\right)\xi\left(\theta,z\right)\quad\left(\theta,z\right)\in\omega,r\in\left(\frac{R}{2},R+\frac{H}{2}\right)
\]

Now, let us observe that by construction we have the following properties
\begin{itemize}
\item The domain of $\overline{\mathcal{F}}_{\delta}\left(\xi\right)$ is a \emph{steady} domain.
    \item The \emph{coupling condition} $\text{tr}_{\Gamma^{\delta}\left(t\right)}\overline{\mathcal{F}}_{\delta}\left(\xi\right)=\xi\mathbf{e}_{r}$  is fulfilled (since  $\Gamma^{\delta}(t)$ corresponds to $r=R+\delta$).
    
    \item  The \emph{divergence-free} condition $\text{div}_{x}\overline{\mathcal{F}}\left(\xi\right)=0$. Indeed, this is because the divergence in cylindrical coordinates equals
    \[\frac{1}{r}\partial_{r}\left(r\overline{\mathcal{F}}_{r}\right)+\frac{1}{r}\partial_{\theta}\overline{\mathcal{F}}_{\theta}+\partial_{z}\overline{\mathcal{F}}_{z}=0
    \]
   Observe that $\overline{\mathcal{F}}_r= \frac{R+\delta}{R}\xi$, $\overline{\mathcal{F}}_{\theta}=\overline{\mathcal{F}}_{z}=0$. 
   
    \item For $z$ in a neighbourhood of $0$ and $L$ we have that $\overline{\mathcal{F}}\left(\xi\right)=0$.
  
\end{itemize}
Now, in order to obtain a divergence-free function to the whole fluid domain--that is all $r>0$-- we  set

\[
\tilde{\mathcal{F}_{\delta}}\left(\xi\right)\left(r,\theta,z\right)=\begin{cases}
\overline{\mathcal{F}}_{\delta}\left(\xi\right)\left(r,\theta,z\right) & r\in\left(\frac{R}{2},R+\frac{H}{2}\right)\\
\overline{\mathcal{F}}_{\delta}\left(\xi\right)\left(\frac{R}{2},\theta,z\right)+\beta_{\xi,\delta}\left(r\right)\gamma\left(z\right)\cdot\left(1,0,0\right) & r\in\left[0,\frac{R}{2}\right]
\end{cases}
\]
where $\beta_{\xi,\delta}$ is chosen such that
\[
\beta_{\xi,\delta}\in C^{\infty}\left(0,\frac{R}{2};\mathbb{R}_{+}\right),\ \beta_{\xi,\delta}\equiv0\ \text{on}\ \left[\frac{R}{4},\frac{R}{2}\right],\ \int_{0}^{R/2}\beta_{\xi,\delta}dr=\frac{1}{2\pi}\int_{\omega}\left(R+\delta\right)\xi dA
\]
and $\gamma$ is chosen (and fixed) such that
\[
\gamma\in C^{\infty}\left(0,L\right),\gamma\left(0\right)=1,\gamma\left(L\right)=0.
\]
In this way, we see that on the cylinder $C=C\left(\frac{R}{2},\theta,z\right)$ of radius $R/2$ we have that $
\int_{C}\text{div}\tilde{\mathcal{F}_{\delta}}\left(\xi\right)dx=0
$ (by the particular choice of $\beta_\xi, \gamma$) and thus we can set 
\[
\mathcal{F}_{\delta}\left(\xi\right):=\tilde{\mathcal{F}}_{\delta}\left(\xi\right)-\mathcal{B}_{\delta,\xi},\quad\mathcal{B}_{\delta,\xi}:=\text{Bog}_{C}\ \text{div}\tilde{\mathcal{F}_{\delta}}\left(\xi\right)
\]
 where the Bogovskii operator is introduced in Theorem~\ref{thm:bogovskii}.
One can see now that the operator $\mathcal{F}_{\delta}\left(\xi\right)$ is a suitable test-function extension, with divergence zero on the fluid domains. We have now  the following
\begin{proposition}\label{prop:estimates-extension-operator}
For any functions $\delta\in C_{\text{per}}^{\infty}\left(I;C^{\infty}\left(\omega;\mathbb{R}\right)\right)$ (prescribing the moving fluid domain $\Omega_{F} ^{\delta}(t)$) and any $\xi\in C^{\infty}\left(I;C_{0}^{\infty}\left(\omega;\mathbb{R}\right)\right)$ there exists a linear extension operator $\mathcal{F}_{\delta}\left(\xi\right)$ which enjoys the properties \[
\text{div}\mathcal{F}_{\delta}\left(\xi\right)=0\quad\text{in}\ \Omega_{F}^{\delta}\left(t\right),\quad\text{tr}_{\Gamma^{\delta}}\mathcal{F}_{\delta}\left(t\right)=\xi\mathbf{e}_{r},
\]
with $\mathcal{F}_{\delta}\left(\xi\right)\times \mathbf{e}_{z}=\mathbf{0}$ on $\Gamma_{in/out}$.

Furtheremore, for all $p,q\in\left[1,\infty\right]$ we have that 
\begin{equation}\label{eqn:cons-ext-1}
\left\Vert \mathcal{F}_{\delta}\left(\xi\right)\right\Vert _{L_{t}^{p}W_{x}^{k,q}}\lesssim\left\Vert \delta\xi\right\Vert _{L_{t}^{p}W_{x}^{k,q}},k\in\left[0,1\right]
\end{equation}
and  
\begin{equation}\label{eqn:cons-ext-2}
\left\Vert \partial_{t}\mathcal{F}_{\delta}\left(\xi\right)\right\Vert _{L_{t}^{p}L_{x}^{q}}\lesssim\left\Vert \partial_{t}\left(\delta\xi\right)\right\Vert _{L_{t}^{p}L_{x}^{q}}.
\end{equation}
 Additionally it also holds that for any two $\delta_1,\delta_2$  as above  and for all $\xi$  we have that
\begin{equation}\label{eqn:cons-ext-3}
\left\Vert \mathcal{F}_{\delta_{1}}\left(\xi\right)-\mathcal{F}_{\delta_{2}}\left(\xi\right)\right\Vert _{L_{t}^{\infty}L_{x}^{q}}\lesssim\left\Vert \left(\delta_{1}-\delta_{2}\right)\left(\xi\right)\right\Vert _{L_{t}^{\infty}L_{x}^{q}}+\left(\left\Vert \delta_{1}\xi\right\Vert _{L_{t,x}^{\infty}}+\left\Vert \delta_{2}\xi\right\Vert _{L_{t,x}^{\infty}}\right)^{\left(q-1\right)/q}\left\Vert \left(\delta_{1}-\delta_{2}\right)\left(\xi\right)\right\Vert _{L_{t}^{\infty}L_{x}^{q}}^{1/q}.
\end{equation}
The continuity constants depend only on $\Omega$ and $\Omega_S$.
\end{proposition}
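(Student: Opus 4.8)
\medskip
\noindent The plan is to take the explicit construction preceding the statement at face value and turn its bullet‑point properties into the four quantitative bounds, in three steps. \emph{Step 1 (structural properties).} These are essentially already verified above: on the annulus $r\in(R/2,R+H/2)$ one has $r\,\overline{\mathcal F}_\delta(\xi)\cdot\mathbf e_r=(R+\delta)\xi$, independent of $r$, so $\overline{\mathcal F}_\delta(\xi)$ is divergence‑free in cylindrical coordinates; at $r=R+\delta$ the prefactor is $1$, hence $\mathrm{tr}_{\Gamma^\delta}\overline{\mathcal F}_\delta(\xi)=\xi\mathbf e_r$; and since $\xi$ is compactly supported in $\omega$, $\overline{\mathcal F}_\delta(\xi)$ vanishes for $z$ near $0$ and $L$, so $\mathcal F_\delta(\xi)\times\mathbf e_z=\mathbf 0$ on $\Gamma_{in/out}$. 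The inner field $\tilde{\mathcal F}_\delta(\xi)$ matches $\overline{\mathcal F}_\delta(\xi)$ at $r=R/2$ and differs from it only inside the fixed cylinder $C$; the normalization imposed on $\beta_{\xi,\delta}$ is designed so that $\mathrm{div}\,\tilde{\mathcal F}_\delta(\xi)$ has zero mean on $C$, which is the compatibility condition for $\mathrm{Bog}_C$ in Theorem~\ref{thm:bogovskii}, and since $\mathrm{Bog}_C$ returns a field supported in $C$ with vanishing boundary trace, subtracting $\mathcal B_{\delta,\xi}$ makes $\mathcal F_\delta(\xi)$ divergence‑free on all of $\Omega_F^\delta(t)$ without altering the trace on $\Gamma^\delta$ or the behaviour near $\Gamma_{in/out}$.

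\emph{Step 2 (the $W^{k,q}$ and time‑derivative estimates \eqref{eqn:cons-ext-1}--\eqref{eqn:cons-ext-2}).} I would split $\mathcal F_\delta(\xi)=\tilde{\mathcal F}_\delta(\xi)-\mathcal B_{\delta,\xi}$, estimate each piece pointwise in $t$, and then integrate in time, the whole construction being $t$‑wise linear. From the explicit formula, $|\overline{\mathcal F}_\delta(\xi)|\le\tfrac2R|(R+\delta)\xi|$ on the annulus and, differentiating once in $x$, the first derivatives of $\overline{\mathcal F}_\delta(\xi)$ are controlled by those of $(R+\delta)\xi$ together with $(R+\delta)\xi$ itself, with coefficients depending only on the admissible bounds for $\delta$; hence $\|\overline{\mathcal F}_\delta(\xi)\|_{W^{k,q}_x}\lesssim\|(R+\delta)\xi\|_{W^{k,q}_x}$ for $k\in[0,1]$. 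On $C$ the $r$‑independent trace term is already covered, while the correction $\beta_{\xi,\delta}\gamma\cdot(1,0,0)$, written as $c_{\xi,\delta}\,\beta_0(r)\gamma(z)\cdot(1,0,0)$ with fixed profiles $\beta_0,\gamma$ and scalar $c_{\xi,\delta}\propto\int_\omega(R+\delta)\xi\,dA$, obeys $\|c_{\xi,\delta}\beta_0\gamma\|_{W^{k,q}(C)}\lesssim|c_{\xi,\delta}|\lesssim\|(R+\delta)\xi\|_{L^1(\omega)}\lesssim\|(R+\delta)\xi\|_{L^q(\omega)}$. Finally $\mathrm{div}\,\tilde{\mathcal F}_\delta(\xi)$ is supported in $C$ and estimated by the same data, so Theorem~\ref{thm:bogovskii} controls $\mathcal B_{\delta,\xi}$ in $W^{k,q}$ by $\|(R+\delta)\xi\|_{W^{k,q}}$. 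For \eqref{eqn:cons-ext-2} I would differentiate everything in $t$ — the $t$‑dependence enters only through $\delta(t,\cdot)$ and $\xi(t,\cdot)$, and $\partial_t\overline{\mathcal F}_\delta(\xi)=r^{-1}\partial_t\big((R+\delta)\xi\big)\mathbf e_r$, $\partial_t c_{\xi,\delta}\propto\int_\omega\partial_t\big((R+\delta)\xi\big)\,dA$ — and repeat the same estimates, obtaining the bound in terms of $\|\partial_t(\delta\xi)\|$.

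\emph{Step 3 (the Lipschitz‑in‑$\delta$ estimate \eqref{eqn:cons-ext-3}).} Here $\mathcal F_{\delta_1}(\xi)$ and $\mathcal F_{\delta_2}(\xi)$ are defined on the different domains $\Omega_F^{\delta_1}(t)$, $\Omega_F^{\delta_2}(t)$, so I would decompose the $L^q_x$ norm of their difference over the common part $\Omega_F^{\delta_1}(t)\cap\Omega_F^{\delta_2}(t)$ and over the symmetric difference $\Omega_F^{\delta_1}(t)\triangle\Omega_F^{\delta_2}(t)$. On the common part, inside the annulus the difference is exactly $\tfrac{\delta_1-\delta_2}{r}\mathbf e_r\xi$, while on $C$ the difference of the inner and Bogovskij parts depends on $\delta_i$ only through $\int_\omega(R+\delta_i)\xi\,dA$ and the $r=R/2$ trace, both affine in $\delta_i$; this yields the linear term $\|(\delta_1-\delta_2)\xi\|_{L^\infty_tL^q_x}$ in \eqref{eqn:cons-ext-3}. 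On the symmetric difference the Bogovskij parts vanish (they are supported in $C$, which is disjoint from it), so the fields reduce to $\overline{\mathcal F}_{\delta_i}(\xi)$, and I would use the Hölder splitting
\[
\int_{\Omega_F^{\delta_1}(t)\triangle\Omega_F^{\delta_2}(t)}\!\!|\overline{\mathcal F}_{\delta_i}(\xi)|^{q}\,dx\ \le\ \|\overline{\mathcal F}_{\delta_i}(\xi)\|_{L^\infty_x}^{q-1}\int_{\Omega_F^{\delta_1}(t)\triangle\Omega_F^{\delta_2}(t)}\!\!|\overline{\mathcal F}_{\delta_i}(\xi)|\,dx,
\]
slicing the last integral in $r$: for fixed $(\theta,z)$ the radius runs between $R+\delta_1$ and $R+\delta_2$ with integrand $\lesssim|\xi(\theta,z)|$, so it is $\lesssim\int_\omega|\xi|\,|\delta_1-\delta_2|\,dA=\|(\delta_1-\delta_2)\xi\|_{L^1(\omega)}\lesssim\|(\delta_1-\delta_2)\xi\|_{L^q(\omega)}$. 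Together with $\|\overline{\mathcal F}_{\delta_i}(\xi)\|_{L^\infty_x}\lesssim\|(R+\delta_i)\xi\|_{L^\infty_x}$ from Step 2, taking the supremum in $t$ reproduces the second (interpolated) term of \eqref{eqn:cons-ext-3}. In all steps the implicit constants involve only $R$ and $H$, i.e.\ $\Omega$ and $\Omega_S$.

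\emph{The main obstacle} is precisely the comparison across two distinct fluid domains in Step 3: one cannot subtract $\mathcal F_{\delta_1}(\xi)$ and $\mathcal F_{\delta_2}(\xi)$ pointwise where the domains disagree, and the term $\big(\|\delta_1\xi\|+\|\delta_2\xi\|\big)^{(q-1)/q}\|(\delta_1-\delta_2)\xi\|^{1/q}$ is the unavoidable price for that when $q<\infty$ — it is generated by the Hölder splitting above combined with the slicing bound $\int_{\Omega_F^{\delta_1}(t)\triangle\Omega_F^{\delta_2}(t)}|\overline{\mathcal F}_{\delta_i}(\xi)|\,dx\lesssim\|(\delta_1-\delta_2)\xi\|_{L^1}$. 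Everything else is careful bookkeeping of the explicit formula together with the mapping properties of $\mathrm{Bog}_C$ from Theorem~\ref{thm:bogovskii}.
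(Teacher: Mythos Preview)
Your proposal is correct and is precisely the detailed expansion of the paper's own proof, which consists of the single sentence ``It follows by elementary computations involving the product rule differentiation.'' In particular, your Step~3 symmetric-difference/H\"older splitting is a clean derivation of the interpolated term in \eqref{eqn:cons-ext-3} that the paper leaves entirely implicit.
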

\begin{proof}
    It follows by elementary computations involving the product rule differentiation.
\end{proof}
\begin{remark}\label{rmk:test-functions-extension}
It can be easily seen that for all smooth functions $\xi\in C^{\infty}\left(I\times\omega\right)$ and $\boldsymbol{\xi}\in C^{\infty}\left(I\times\Omega_{S}\right)$ such that $\boldsymbol{\xi}\left(t,R,\theta,z\right)=\mathbf{e}_{r}\left(\theta\right)\xi\left(\theta,z\right),\ \left(\theta,z\right)\in\omega$ it follows that $\left(\mathcal{F}_{\delta}\left(\xi\right),\xi,\boldsymbol{\xi}\right)$ is a valid test function and therefore
\begin{equation}
\left(\mathcal{F}_{\delta}\left(\xi\right),\xi,\boldsymbol{\xi}\right)\in V_{test}^{\delta}.
\end{equation}
See Section~\ref{section:weak-soln} for details.
\end{remark}



\subsection{The energy balance}\label{ssec:energy-balance}
Let us multiply the fluid equation \eqref{eqn:fluid} by $\mathbf{u}$ to obtain
\begin{equation}\label{eqn:test-u-fluid-formal}
\int_{\Omega_{F}^{\eta}\left(t\right)}\partial_{t}\mathbf{u}\cdot\mathbf{u}\ dx+\int_{\Omega_{F}^{\eta}\left(t\right)}\left(\mathbf{u}\cdot\nabla\right)\mathbf{u}\cdot\mathbf{u}\ dx=\int_{\Omega_{F}^{\eta}\left(t\right)}\text{div}\ \sigma \cdot\mathbf{u}\ dx
\end{equation}
By using Reynolds' transport theorem, we have 
\begin{equation}\label{Rey}
\begin{aligned}\int_{\Omega_{F}^{\eta}\left(t\right)}\partial_{t}\mathbf{u}\cdot\mathbf{u}\ dx & =\int_{\Omega_{F}^{\eta}\left(t\right)}\partial_{t}\frac{\left|\mathbf{u}\right|^{2}}{2}\ dx=\frac{d}{dt}\int_{\Omega_{F}^{\eta}\left(t\right)}\frac{\left|\mathbf{u}\right|^{2}}{2}dx-\int_{\Gamma\left(t\right)}\frac{\left|\mathbf{u}\right|^{2}}{2}\mathbf{u}\cdot\mathbf{n}\ dA.
\end{aligned}
\end{equation}
We use integration by parts and the divergence-free condition to rewrite the convective term in the following way:
\begin{equation}\label{conv}
\begin{aligned}\int_{\Omega_{F}^{\eta}\left(t\right)}\left(\mathbf{u}\cdot\nabla\right)\mathbf{u}\cdot\mathbf{u}\ dx =\int_{\partial\Omega_{F}^{\eta}\left(t\right)}\frac{\left|\mathbf{u}\right|^{2}}{2}\mathbf{u}\cdot\mathbf{n}\ dA =\int_{\Gamma\left(t\right)}\frac{\left|\mathbf{u}\right|^{2}}{2}\mathbf{u}\cdot\mathbf{n}\ dA+\int_{\Gamma_{in/out}}\frac{\left|\mathbf{u}\right|^{2}}{2}\mathbf{u}\cdot\mathbf{n}\ dA.
\end{aligned}
\end{equation}

Now, concerning the right hand side of \eqref{eqn:test-u-fluid-formal}, we have
\begin{multline}\label{RHS}
\int_{\Omega_{F}^{\eta}\left(t\right)}\text{div}\ \sigma \cdot\mathbf{u}\ dx =\int_{\Omega_{F}^{\eta}\left(t\right)}\text{div}\left(\sigma\mathbf{u}\right)\ dx-\int_{\Omega_{F}^{\eta}\left(t\right)}\sigma:\nabla\mathbf{u}\ dx\\
=\int_{\partial\Omega_{F}^{\eta}\left(t\right)}\sigma\mathbf{u}\cdot\mathbf{n}\ dA-\int_{\Omega_{F}^{\eta}\left(t\right)}\left|\nabla\mathbf{u}\right|^{2}\ dx =\int_{\Gamma\left(t\right)}\sigma\mathbf{u}\cdot\mathbf{n} \ dA+\int_{\Gamma_{in/out}}\sigma\mathbf{u}\cdot\mathbf{n}\ dA-\int_{\Omega_{F}^{\eta}\left(t\right)}\left|\nabla\mathbf{u}\right|^{2}\ dx .
\end{multline}
Putting together \eqref{Rey}--\eqref{RHS}, the relation \eqref{eqn:test-u-fluid-formal} becomes 
\begin{equation*}
    \begin{aligned}\frac{d}{dt}\int_{\Omega_{F}^{\eta}\left(t\right)}\frac{\left|\mathbf{u}\right|^{2}}{2}\ dx+\int_{\Omega_{F}^{\eta}\left(t\right)}\left|\nabla\mathbf{u}\right|^{2}\ dx=
\int_{\Gamma\left(t\right)}\sigma\mathbf{u}\cdot\mathbf{n}\ dA+\int_{\Gamma_{in/out}}\sigma\mathbf{u}\cdot\mathbf{n}\ dA-\int_{\Gamma_{in/out}}\frac{\left|\mathbf{u}\right|^{2}}{2}\mathbf{u}\cdot\mathbf{n}\ dA.
\end{aligned}
\end{equation*}
The first term containing $
\Gamma(t)$ will cancel with a future-one from the elastic equation. On $\Gamma_{in}$, since $\mathbf{n}=(-1,0,0)$, we have 
\begin{equation*}
\begin{aligned}\int_{\Gamma_{in}}\sigma\mathbf{u}\cdot\mathbf{n}\ dA-\int_{\Gamma_{in}}\frac{\left|\mathbf{u}\right|^{2}}{2}\mathbf{u}\cdot\mathbf{n}\ dA  =
\int_{\Gamma_{in}}\left(\partial_{1}\mathbf{u}^{1}-p-\frac{\left|\mathbf{u}\right|^{2}}{2}\right)\cdot\mathbf{u}^{1}\ dA  =
-P_{in}\left(t\right)\int_{\Gamma_{in}}\mathbf{u}\cdot\mathbf{n}\ dA
\end{aligned}
\end{equation*}
 where we used the fact that,  since $\mathbf{u}^{2}=\mathbf{u}^{3}=0$ around $\Gamma_{in}$ and $\text{div} \ \mathbf{u}=0$ it follows that $\partial_{1}\mathbf{u}^{1}=0$. Thus, we obtain
\begin{multline}\label{eqn:mult-u}
\frac{d}{dt}\int_{\Omega_{F}^{\eta}\left(t\right)}\frac{\left|\mathbf{u}\right|^{2}}{2}\ dx+\int_{\Omega_{F}^{\eta}\left(t\right)}\left|\nabla\mathbf{u}\right|^{2}\ dx+P_{in/out}\left(t\right)\int_{\Gamma_{in/out}}\mathbf{u}\cdot\mathbf{n}\ dA=\int_{\omega}J\left(t,z,\theta\right)\left(\sigma\mathbf{n}\circ\phi_{\eta}\right)\cdot\mathbf{e}_{r}\partial_{t}\eta\ dA,
\end{multline}
where we performed a change of variables to obtain the last equality. Now we multiply \eqref{eqn:wave} by $\partial_{t} \eta$ and integrate on $\omega$ to get that 
\begin{equation}\label{eqn:mult-partialteta}
   \begin{aligned}\frac{d}{dt}\int_{\omega}\frac{1}{2}\left(\partial_{t}\eta\right)^{2}dA+\int_{\omega}\left|\nabla^{2}\eta\right|^{2}dA =
-\int_{\omega}J\left(t,z,\theta\right)\left(\sigma\mathbf{n}\circ\phi_{\eta}\right)\cdot\mathbf{e}_{r}\partial_{t}\eta\ dA  +\int_{\omega}\mathbb{S}\left(t,R,\theta,z\right)\mathbf{e}_{r}\cdot\mathbf{e}_{r}\partial_{t}\eta\ dA.
\end{aligned}
\end{equation}
Let us also multiply \eqref{eqn:thick} by $\partial_{t}\mathbf{d}$ and integrate to obtain that 
\begin{equation}\label{eqn:mult-partialtd}
   \frac{d}{dt}\int_{\Omega_{S}}\frac{1}{2}\left(\partial_{t}\mathbf{d}\right)^{2}\ dx+\int_{\Omega_{S}}\left(\left|\nabla\mathbf{d}\right|^{2}+\left|\nabla\partial_{t}\mathbf{d}\right|^{2}\right)\ dx=\int_{\partial\Omega_{S}}\mathbb{S}\mathbf{e}_{r}\cdot\partial_{t}\mathbf{d}\ dA .
\end{equation}
We sum  \eqref{eqn:mult-u}, \eqref{eqn:mult-partialteta}, \eqref{eqn:mult-partialtd} 
and make use of the coupling condition \eqref{eqn:dynamic-coupling}
to obtain the following energy balance:
\begin{equation}\label{eqn:energy-balance}
\frac{d}{dt}E\left(t\right)+D\left(t\right)=\pm P_{in/out}\left(t\right)\int_{\Gamma_{in/out}}\left(\mathbf{u}\cdot\mathbf{n}\right)\ dA
\end{equation}
where we denote the energy by
\begin{equation}\label{eqn:energy-generic}
E\left(t\right):=E_{kin}\left(t\right)+E_{el}\left(t\right).    
\end{equation}
The kinetic energy $E_{kin}$ and the elastic energy $E_{el}$ are given by
\begin{equation}\label{eqn:en-kinetic}
E_{kin}\left(t\right):=\frac{1}{2}\left(\int_{\Omega_{F}^{\eta}\left(t\right)}\left|\mathbf{u}\right|^{2}dx+\int_{\omega}\left(\partial_{t}\eta\right)^{2}dA+\int_{\Omega_{S}}\left|\partial_{t}\mathbf{d}\right|^{2}dx\right),
\end{equation}
\begin{equation}\label{eqn:en-elastic}
E_{el}\left(t\right):=\frac{1}{2}\left(\int_{\omega}\left|\nabla^{2}\eta\right|^{2}dA+\int_{\Omega_{S}}\left(\left|\nabla\mathbf{d}\right|^{2}+\left|\text{div}\mathbf{d}\right|^{2}\right)dx\right).
\end{equation}
The diffusive energy $D$ is given by
\begin{equation}\label{eqn:def-diff}
D\left(t\right):=\int_{\Omega_{F}^{\eta}\left(t\right)}\left|\nabla\mathbf{u}\right|^{2}dx+\int_{\Omega_{S}}\left|\nabla\partial_{t}\mathbf{d}\right|^{2}dx .
\end{equation}
\subsection{Uniform energy estimates}\label{ssec:unif-energ-est}
Let us assume the time-periodicity of the unknowns and hence $E\left(0\right)=E\left(T\right)$. Let us  integrate \eqref{eqn:energy-balance} in time to obtain 
\begin{equation*}
\int_{I}D\left(t\right)dt=\pm\int_{I}P_{in/out}\left(t\right)\int_{\Gamma_{in/out}}\left(\mathbf{u}\cdot\mathbf{n}\right) \ dA \ dt.
\end{equation*}
Thus by the trace operator and Poincar\'{e} inequality, \footnote{Note that since $\text{tr}_{\Gamma\left(t\right)}\mathbf{u}=\partial_{t}\eta\mathbf{e}_{r}$ and the fact that the shell is clamped, roughly speaking $\eta$ is compactly supported and so is $\mathbf{u}$. This is needed for applying Poincar\'{e}'s inequality.
}
we obtain that 
\[
\int_{\Gamma_{in/out}}\left(\mathbf{u}\cdot\mathbf{n}\right)^{2}\ dA\ dt\lesssim\int_{\Omega_{F}^{\eta}\left(t\right)}\left|\nabla\mathbf{u}\right|^{2}\ dx\ dt.
\]
We have the following \emph{diffusion estimate}
\begin{equation}\label{eqn:diffusion-estimate}
\int_{I}D\left(t\right)dt\lesssim\int_{I}\left|P_{in/out}\left(t\right)\right|^{2}dt
\end{equation}
By Poincar\'{e}'s inequality  it follows that 
\begin{equation}\label{eqn:diff1}
\int_{I}\int_{\Omega_{F}^{\eta}\left(t\right)}\left|\mathbf{u}\right|^{2}dxdt\lesssim\int_{I}\left|P_{in/out}\left(t\right)\right|^{2}dt.
\end{equation}
and 
\begin{equation}\label{eqn:diff2}
\begin{aligned}\int_{I}\int_{\Omega_{S}}\left|\partial_{t}\mathbf{d}\right|^{2}dxdt\lesssim  \int_{I}\int_{\Omega_{S}}\left|\nabla\partial_{t}\mathbf{d}\right|^{2}dxdt\le\int_{I}D\left(t\right)dt
\lesssim \int_{I}\left|P_{in/out}\left(t\right)\right|^{2}dt,
\end{aligned}
\end{equation}
while again by the continuity of the  trace operator and Poincar\'{e}'s inequality we get
\begin{equation}\label{eqn:diff3}
\int_{I}\int_{\omega}\left|\partial_{t}\eta\right|^{2}dAdt\lesssim\int_{I}D\left(t\right)dt\lesssim\int_{I}\left|P_{in/out}\left(t\right)\right|^{2}dt.
\end{equation}

Adding \eqref{eqn:diff1}, \eqref{eqn:diff2}, \eqref{eqn:diff3}
we get that 
\begin{equation}\label{eqn:kin-diff}
\int_{I}E_{kin}\left(t\right)dt\lesssim\int_{I}\left|P_{in/out}\left(t\right)\right|^{2}dt .
\end{equation}

Now, by the  mean-value theorem there exists $t_0 \in I$ for which \[E\left(t_{0}\right)=\frac{1}{T}\int_{I}E\left(t\right)\ dt=\fint_{I} E(t)\ dt, \] and integrating \eqref{eqn:energy-balance} from $t_0$ to an arbitrary $t\in I$ and using \eqref{eqn:kin-diff} making use of Young's inequality and the trace estimate we find that 
\begin{equation}\label{eqn:en-ineq-1}
  \begin{aligned}\sup_{t\in I}E\left(t\right) & \le\fint_{I}E\left(t\right)dt+\int_{I}\int_{\Omega_{F}^{\eta}\left(t\right)}\left|\nabla\mathbf{u}\right|^{2}dxdt+\int_{I}\left|P_{in/out}\left(t\right)\right|\left|\int_{\Gamma_{in/out}}\mathbf{u}\cdot\mathbf{n}\ dA\ dt\right|\\
 & \lesssim\fint_{I}E_{el}\left(t\right)dt+\int_{I}\left|P_{in/out}\left(t\right)\right|^{2}dt.
\end{aligned}
\end{equation}

Now, in order to estimate $E_{el}$, it would be convenient to test \eqref{eqn:wave} by $ 
\eta$ and \eqref{eqn:thick} by $\mathbf{d}$. The test function for the fluid equation is then $\mathcal{F}_{\eta} \left(\eta \right)$ (see also Remark~\ref{rmk:test-functions-extension} and with the computations and notations of Subsection~\ref{ssec:deriving-weak-form}). Thus, we obtain
\begin{multline}\label{eqn:est-E-el-long}
E_{el}\left(t\right)\le  \int_{I}\int_{\Omega_{F}^{\eta}\left(t\right)}\left|\mathbf{u}\cdot\partial_{t}\mathcal{F}_{\eta}\left(\eta\right)\right|+\left|\nabla\mathbf{u}\cdot\nabla\mathcal{F}_{\eta}\left(\eta\right)\right|\ dx\ dt+\int_{I}\left|b\left(t,\mathbf{u,}\mathbf{u},\mathcal{F}_{\eta}\left(\eta\right)\right)\right|\ dt \\ +\int_{I}\int_{\Omega_{S}}\left(\partial_{t}\mathbf{d}\right)^{2}\left|\nabla\partial_{t}\mathbf{d}\cdot\nabla\mathbf{d}\right|\ dA\ dt +
  \int_{I}\int_{\omega}\frac{1}{2}\left|\left(\partial_{t}\eta\right)^{2}\left(\eta+R\right)\right|+\left(\partial_{t}\eta\right)^{2}\ dA\ dt+
  \int_{I}\left\langle F\left(t\right),\mathcal{F}_{\eta}\left(\eta\right)\right\rangle _{\Gamma_{in/out}}dt
=:  \sum_{k=1}^{8}T_{k}.
\end{multline}
We will now estimate only the most tedious terms, based on the operator $\mathcal{F}_{\eta}$ introduced in Proposition \ref{prop:estimates-extension-operator}. The remaining terms can be estimated similarly but more simply. Using Proposition~\ref{prop:estimates-extension-operator}, the estimate \eqref{eqn:kin-diff} and  the embedding $H_{x}^{2}\hookrightarrow W^{1,q}_{x} \hookrightarrow L_{x}^{\infty}$ for any $1\le q <\infty$ (in 2D, for $\eta$), we have
\begin{equation}\label{eqn:est-T1}
   \begin{aligned}\left|T_{1}\right|\le & \left\Vert \mathbf{u}\right\Vert _{L_{t}^{2}L_{x}^{2}}\left\Vert \partial_{t}\mathcal{F}_{\eta}\left(\eta\right)\right\Vert _{L_{t}^{2}L_{x}^{2}}
\lesssim \left\Vert P_{in/out}\right\Vert _{L_{t}^{2}}\left\Vert \partial_{t}\left(\eta^{2}\right)\right\Vert _{L_{t}^{2}L_{x}^{2}}\\
\lesssim & \left\Vert P_{in/out}\right\Vert _{L_{t}^{2}}\left\Vert \partial_{t}\eta\right\Vert _{L_{t}^{2} L^2_{x}}\left\Vert \eta\right\Vert _{L_{t}^{\infty}L_{x}^{\infty}}
\lesssim \left\Vert P_{in/out}\right\Vert _{L_{t}^{2}}^{2}\left(\sup_{t\in I}E_{el}\left(t\right)\right)^{1/2}.
\end{aligned}
\end{equation}
Then we estimate the convective term $T_3$ for which we have 
\begin{equation}\label{eqn:est-T3}
    \begin{aligned}T_{3} & \le\int_{I}\int_{\Omega_{F}^{\eta}\left(t\right)}\left|\mathbf{u}\right|\left|\nabla\mathbf{u}\right|\left|\mathcal{F}_{\eta}\left(\eta\right)\right|+\left|\mathbf{u}\right|^{2}\left|\nabla\mathcal{F}_{\eta}\left(\eta\right)\right|dxdt\\
 & =:T_{3,1}+T_{3,2},
\end{aligned}
\end{equation}
and then 
\begin{equation}\label{eqn:est-T31-T32}
\begin{aligned}T_{3,1}+T_{3,2}\le & \left\Vert \mathbf{u}\right\Vert _{L_{t}^{2}L_{x}^{6}}\left\Vert \nabla\mathbf{u}\right\Vert _{L_{t}^{2}L_{x}^{2}}\left\Vert \mathcal{F}_{\eta}\left(\eta\right)\right\Vert _{L_{t}^{\infty}L_{x}^{3}}+\left\Vert \mathbf{u}\right\Vert _{L_{t}^{2}L_{x}^{6}}^{2}\left\Vert \mathcal{F}_{\eta}\left(\eta\right)\right\Vert _{L_{t}^{\infty}W_{x}^{1,3/2}}\\
\lesssim & \left\Vert P_{in/out}\right\Vert _{L_{t}^{2}}^{2}\left(\left\Vert \eta^{2}\right\Vert _{L_{t}^{\infty}L_{x}^{3}}+\left\Vert \left|\nabla\eta\right|\eta\right\Vert _{L_{t}^{\infty}L_{x}^{3/2}}\right)\\
\lesssim & \left\Vert P_{in/out}\right\Vert _{L_{t}^{2}}^{2}\left(\left(\sup_{t\in I}E_{el}\left(t\right)\right)+\left(\sup_{t\in I}E_{el}\left(t\right)\right)^{1/2+1/2}\right)\\
\lesssim & \left\Vert P_{in/out}\right\Vert _{L_{t}^{2}}^{2}\left(\sup_{t\in I}E_{el}\left(t\right)\right)
\end{aligned}
\end{equation}
Thus, this means that for \emph{sufficiently small} $\left\Vert P_{in/out}\right\Vert _{L_{t}^{2}}$ from
\eqref{eqn:est-E-el-long}-\eqref{eqn:est-T31-T32} it follows that 
\begin{equation}\label{eqn:E-El-new}
    \sup_{t\in I}E_{el}\left(t\right)\lesssim\left\Vert P_{in/out}\right\Vert _{L_{t}^{2}L_{x}^{2}}^{2}.
\end{equation}
From \eqref{eqn:E-El-new} and \eqref{eqn:en-ineq-1}, we have proved the following
\begin{proposition}   
There exists a positive constant $C_0=C_0 (\texttt{data})$ for which $\left\Vert P_{in/out}\right\Vert _{L_{t}^{2}}\le C_{0}$ implies that 
\begin{equation}\label{eqn:formal-en-bound}
    \sup_{t\in I}E\left(t\right)\lesssim\left\Vert P_{in/out}\right\Vert _{L_{t}^{2}}^{2}.
\end{equation}
\end{proposition}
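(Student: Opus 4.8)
The plan is to obtain \eqref{eqn:formal-en-bound} by chaining together the estimates already assembled above, the only genuinely new ingredient being an absorption argument that is valid precisely when the boundary data is small. First I would integrate the energy balance \eqref{eqn:energy-balance} over one period: time-periodicity kills the term $\frac{d}{dt}E$, so $\int_I D(t)\,dt$ is controlled by $\int_I |P_{in/out}(t)|\,\big|\int_{\Gamma_{in/out}}\mathbf u\cdot\mathbf n\,dA\big|\,dt$; bounding the boundary flux by the trace theorem together with Poincar\'e's inequality (legitimate because the clamped shell forces $\mathbf u$ to vanish near $\partial\omega$ on $\Gamma(t)$), then Young's inequality, yields the diffusion estimate \eqref{eqn:diffusion-estimate}, and a further application of Poincar\'e on each piece of $E_{kin}$ gives \eqref{eqn:kin-diff}.

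Next I would invoke the mean value theorem to pick $t_0\in I$ with $E(t_0)=\fint_I E(t)\,dt$, integrate \eqref{eqn:energy-balance} from $t_0$ to an arbitrary $t\in I$, and absorb the resulting flux term with Young's inequality; this produces \eqref{eqn:en-ineq-1}, namely $\sup_{t\in I}E(t)\lesssim \fint_I E_{el}(t)\,dt+\|P_{in/out}\|_{L^2_t}^2$. Thus the whole statement reduces to controlling the time-averaged elastic energy. To do this — the crux of the argument — I would test \eqref{eqn:wave} by $\eta$, the thick-structure equation \eqref{eqn:thick} by $\mathbf d$, and, crucially, the fluid equation by the divergence-free extension $\mathcal F_{\eta}(\eta)$ furnished by Proposition~\ref{prop:estimates-extension-operator}, which is an admissible test function by Remark~\ref{rmk:test-functions-extension}. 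Summing and using the coupling conditions \eqref{eqn:coupling}--\eqref{eqn:dynamic-coupling} gives the decomposition \eqref{eqn:est-E-el-long}, $E_{el}(t)\le\sum_{k=1}^{8}T_k$. Each $T_k$ is then estimated using the continuity bounds \eqref{eqn:cons-ext-1}--\eqref{eqn:cons-ext-2} for $\mathcal F_{\eta}$, the kinetic bound \eqref{eqn:kin-diff}, H\"older's inequality, and the Sobolev embeddings $H^2(\omega)\hookrightarrow W^{1,q}(\omega)\hookrightarrow L^{\infty}(\omega)$ in 2D for $\eta$ and $H^1(\Omega_F^{\eta})\hookrightarrow L^6$ in 3D for $\mathbf u$; as the model computations \eqref{eqn:est-T1} and \eqref{eqn:est-T31-T32} show, every term is dominated by $\|P_{in/out}\|_{L^2_t}^2\,(\sup_I E_{el})^{1/2}$ or $\|P_{in/out}\|_{L^2_t}^2\,(\sup_I E_{el})$.

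Finally I would absorb: writing $y:=\sup_{t\in I}E_{el}(t)$, the previous step gives an inequality of the form $y\lesssim \|P_{in/out}\|_{L^2_t}^2+\|P_{in/out}\|_{L^2_t}^2\,y^{1/2}+\|P_{in/out}\|_{L^2_t}^2\,y$. Choosing $C_0=C_0(\texttt{data})$ small enough — the relevant constants being the embedding and extension-operator constants, which by Proposition~\ref{prop:estimates-extension-operator} depend only on $\Omega$ and $\Omega_S$ — the last two terms on the right are absorbed into the left-hand side (using $ab\le\epsilon a^2+C_\epsilon b^2$ for the $y^{1/2}$ contribution), yielding $y\lesssim\|P_{in/out}\|_{L^2_t}^2$, which is \eqref{eqn:E-El-new}. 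Substituting this back into \eqref{eqn:en-ineq-1} gives \eqref{eqn:formal-en-bound}.

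The main obstacle is the third step, and within it the convective and extension-operator terms $T_1,T_2,T_3$: one must line up the H\"older and interpolation exponents so that each term carries at most the first power of $y=\sup_I E_{el}$ (so that absorption is possible), and keep careful track that all constants depend only on \texttt{data} — in particular on $R$ through the standing requirement $\|\eta\|_{L^\infty_{t,x}}<R$ that makes $\Omega_F^{\eta}(t)$ well defined, and on the Jacobian $J$ appearing in the change of variables in \eqref{eqn:mult-u}. Since these are formal a priori estimates, their rigorous justification will ultimately be performed on the Galerkin approximations in Section~\ref{sec:main-construction}, where the same algebra applies verbatim.
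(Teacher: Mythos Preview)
Your proposal is correct and follows essentially the same approach as the paper: integrate \eqref{eqn:energy-balance} over a period to obtain the diffusion estimate \eqref{eqn:diffusion-estimate} and kinetic bound \eqref{eqn:kin-diff}, use the mean-value argument to arrive at \eqref{eqn:en-ineq-1}, then test with $\bigl(\mathcal F_\eta(\eta),\eta,\mathbf d\bigr)$ to produce \eqref{eqn:est-E-el-long} and estimate the $T_k$ via Proposition~\ref{prop:estimates-extension-operator} and Sobolev embeddings, finally absorbing for small $\|P_{in/out}\|_{L^2_t}$ to reach \eqref{eqn:E-El-new} and hence \eqref{eqn:formal-en-bound}. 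The only minor discrepancy is your claim that \emph{every} $T_k$ carries a factor of $(\sup_I E_{el})^{1/2}$ or $(\sup_I E_{el})$; in fact some terms (e.g.\ $T_6=\int_I\!\int_\omega(\partial_t\eta)^2\,dA\,dt$) are already bounded by $\|P_{in/out}\|_{L^2_t}^2$ alone via \eqref{eqn:kin-diff}, but this only helps and your absorption inequality already accommodates it.
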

\section{Compactness}\label{sec:compactness}
In this section we aim to prove the $L^2$ compactness for a sequence of fluid velocities.
More precisely, let us assume that we have a sequence of solutions $\left(\mathbf{u}_{n},\eta_{n},\mathbf{d}_{n}\right)\in V_{soln}^{\eta_n}$ such that the corresponding energies of the system $E_n$ are uniformly bounded, that is $
\sup_{n\ge1}\sup_{t\in I}E_{n}\left(t\right)\lesssim c<\infty.$
Of course, there exists a weak limit $\left(\mathbf{u},\eta,\mathbf{d}\right)\in V_{soln}^{\eta}$ and a subsequence for which 
$
\left(\mathbf{u}_{n},\eta_{n},\mathbf{d}_{n}\right)\rightharpoonup\left(\mathbf{u},\eta,\mathbf{d}\right)$
in the corresponding spaces. As usual when dealing with the Navier-Stokes equation, we need to prove that $\mathbf{u}_{n}\to\mathbf{u}$ in $L^{2}_{t,x}$. Due to the coupled nature of the solutions we shall in fact  prove  that $\left(\mathbf{u}_{n},\partial_{t}\eta_{n},\partial_{t}\mathbf{d}_{n}\right)\to\left(\mathbf{u},\partial_{t}\eta,\partial_{t}\mathbf{d}\right)$ in $L^{2}_{t,x}$ for a subsequence. 
More precisely we have 
\begin{proposition}\label{prop:compactness}
    Let $\left(\mathbf{u}_{n},\eta_{n},\mathbf{d}_{n}\right)\in V_{soln}^{\eta_n}$ be a sequence of solutions for \eqref{eqn:weak-form} with finite energy $E_n$ and diffusion $D_n$
    \begin{equation}\label{eqn:hyp-bound-En}
    \sup_{n\ge1}\left(\sup_{t\in I}E_{n}\left(t\right)+\int_{I}D_{n}\left(t\right)dt\right)\le c_{0}<\infty
    \end{equation}
    where $E_{n}\left(t\right)$
    and $D_{n}\left(t\right)$
    are the analogue of  $E$ and $D$ from \eqref{eqn:energy-generic} and \eqref{eqn:def-diff}. The constant $c_0$ is fixed sufficiently small to ensure that $\left\Vert \eta_{n}\right\Vert _{L_{t,x}^{\infty}}<R$.
Then the sequence $\left(\mathbf{u}_{n},\partial_{t}\eta_{n},\partial_{t}\mathbf{d}_{n}\right)_{n\ge1}$ has a strongly convergent subsequence in $L^{2}_{t,x}$.
\end{proposition}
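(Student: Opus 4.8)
The plan is to obtain the strong convergence of the coupled velocity $\mathbf{v}_n:=(\mathbf{u}_n,\partial_t\eta_n,\partial_t\mathbf{d}_n)$ from an Aubin--Lions--Simon argument carried out on a \emph{fixed} reference domain, after transporting the fluid unknown back from the moving domains $\Omega_F^{\eta_n}(t)$. The two recurring difficulties are that the fluid domains depend on both $n$ and $t$, and that the only equation available for the fluid is the pressure-free weak formulation \eqref{eqn:weak-form}; hence the time-derivative estimate must be produced by testing against \emph{divergence-free} coupled test fields, which is exactly where the extension operator $\mathcal{F}_\delta$ of Proposition~\ref{prop:estimates-extension-operator} enters.

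First I would record the uniform bounds contained in \eqref{eqn:hyp-bound-En}: $\mathbf{u}_n$ is bounded in $L^\infty(I;L^2(\Omega_F^{\eta_n}(t)))\cap L^2(I;H^1(\Omega_F^{\eta_n}(t)))$; $\eta_n$ is bounded in $L^\infty(I;H^2_0(\omega))\cap W^{1,\infty}(I;L^2(\omega))$, with $\partial_t\eta_n=\text{tr}_{\Gamma^{\eta_n}}\mathbf{u}_n$ also bounded in $L^2(I;H^{1/2}(\omega))$; and — here the viscoelastic term in \eqref{eqn:lame-stress} is essential — $\partial_t\mathbf{d}_n$ is bounded in $L^\infty(I;L^2(\Omega_S))\cap L^2(I;H^1(\Omega_S))$, while $\mathbf{d}_n$ is bounded in $L^\infty(I;H^1(\Omega_S))$. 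A first, soft compactness follows at once: since $H^2_0(\omega)\cembed C(\overline{\omega})$ in 2D and $\partial_t\eta_n$ is equibounded in $L^\infty_tL^2$, the Aubin--Lions--Simon lemma yields, along a subsequence, $\eta_n\to\eta$ in $C(I;C(\overline{\omega}))$; likewise $\mathbf{d}_n\to\mathbf{d}$ in $C(I;H^\sigma(\Omega_S))$ for every $\sigma<1$. Consequently $\|\eta_n\|_{L^\infty_{t,x}}<R$ passes to the limit, the diffeomorphisms $\psi_{\eta_n}$ and $\psi_{\eta_n}^{-1}$ converge uniformly together with their spatial gradients, and the associated Jacobians stay uniformly comparable to $1$.

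For the strong convergence of $\mathbf{v}_n$, I would transport the fluid velocity to the reference fluid domain: set $\hat{\mathbf{u}}_n:=\mathbf{u}_n\circ\psi_{\eta_n}$ on $\Omega_F$, which is divergence-free in the Piola-transformed sense, has trace $\partial_t\eta_n\mathbf{e}_r$ on $\{r=R\}$, and is uniformly bounded in $L^\infty(I;L^2(\Omega_F))\cap L^2(I;H^1(\Omega_F))$. Since $\partial_t\mathbf{d}_n$ has the same trace $\partial_t\eta_n\mathbf{e}_r$ on $\{r=R\}$, gluing $\hat{\mathbf{u}}_n$ and $\partial_t\mathbf{d}_n$ across that fixed interface produces $\hat{\mathbf{w}}_n$ on the fixed domain $\widehat\Omega:=\Omega_F\cup\{r=R\}\cup\Omega_S$, bounded in $L^2(I;H^1(\widehat\Omega))\cap L^\infty(I;L^2(\widehat\Omega))$. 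The crucial estimate is a uniform bound
\[
\|\partial_t\hat{\mathbf{w}}_n\|_{L^{4/3}(I;Z')}\le C,
\]
where $Z$ is a space of smooth coupled test fields on $\widehat\Omega$ that are divergence-free in the pulled-back sense; one gets it by feeding into \eqref{eqn:weak-form} the admissible triples $(\mathcal{F}_{\eta_n}(\xi),\xi,\boldsymbol{\xi})$ supplied by Remark~\ref{rmk:test-functions-extension}, transporting everything to $\Omega_F$, and bounding each resulting term — the moving-frame contributions generated by $\partial_t\psi_{\eta_n}$, the convective term $b(t,\mathbf{u}_n,\mathbf{u}_n,\cdot)$ (handled through $\|\mathbf{u}_n\|_{L^3}\le\|\mathbf{u}_n\|_{L^2}^{1/2}\|\mathbf{u}_n\|_{H^1}^{1/2}$ and $H^1\hookrightarrow L^6$ in 3D, which produces $L^{4/3}$-in-time integrability), the viscous and elastic bilinear forms, and the inflow/outflow functional $\langle F(t),\cdot\rangle_{\Gamma_{in/out}}$ — by the uniform energy and diffusion bounds, using the continuity estimates \eqref{eqn:cons-ext-1}--\eqref{eqn:cons-ext-3}. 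Because $H^1(\widehat\Omega)\cembed L^2(\widehat\Omega)\hookrightarrow Z'$ on the genuinely fixed Lipschitz domain $\widehat\Omega$, Aubin--Lions--Simon gives a subsequence with $\hat{\mathbf{w}}_n\to\hat{\mathbf{w}}$ strongly in $L^2(I;H^s(\widehat\Omega))$ for every $s<1$. Restricting to $\Omega_S$ gives $\partial_t\mathbf{d}_n\to\partial_t\mathbf{d}$ in $L^2_{t,x}$; taking traces on $\{r=R\}$ (continuous from $H^s$, $s>1/2$, into $L^2$) gives $\partial_t\eta_n\to\partial_t\eta$ in $L^2(I;L^2(\omega))$; and composing with $\psi_{\eta_n}^{-1}\to\psi_{\eta}^{-1}$ uniformly transfers the strong convergence of $\hat{\mathbf{u}}_n$ back to $\mathbf{u}_n\to\mathbf{u}$ in $L^2_{t,x}$ over the moving domains, which is the assertion.

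I expect the main obstacle to be exactly the uniform time-regularity estimate on $\hat{\mathbf{w}}_n$ displayed above. It forces one to simultaneously: eliminate the pressure, which is possible only because $\mathcal{F}_\delta$ manufactures divergence-free test functions with prescribed interface trace on each $n$-dependent domain; absorb the commutator-type terms produced when the convective and $\partial_t$ terms are pulled back through $\psi_{\eta_n}$, using only the bounds $\eta_n\in W^{1,\infty}_tL^2\cap L^\infty_tH^2_0$; and keep the nonlinear convective term integrable in time, which in 3D is borderline and relies on interpolating $\mathbf{u}_n\in L^2_tH^1_x$ against $L^\infty_tL^2_x$. A secondary but structurally important point is to set up the Aubin--Lions triple on the \emph{fixed} domain $\widehat\Omega$ — which is precisely what the transport to the reference configuration and the uniform convergence of $\psi_{\eta_n}$ from the soft-compactness step are for.
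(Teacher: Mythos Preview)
Your route is genuinely different from the paper's, and with some care it can be made to work, but two points in the sketch are imprecise enough to count as gaps. The paper does \emph{not} transport to a fixed domain and run Aubin--Lions; instead it proves norm convergence directly, via the splitting
\[
\int_I\!\!\int|\mathbf{u}_n|^2+\int_I\!\!\int|\partial_t\eta_n|^2+\int_I\!\!\int|\partial_t\mathbf{d}_n|^2
=\Bigl[\text{coupled functional with }\mathcal{F}_{\eta_n}(\partial_t\eta_n)\Bigr]+\int_I\!\!\int\mathbf{u}_n\cdot\bigl(\mathbf{u}_n-\mathcal{F}_{\eta_n}(\partial_t\eta_n)\bigr).
\]
For the first piece it introduces the scalar functionals $c_{\xi,\boldsymbol{\xi},n}(t)=\int_{\Omega_F^{\eta_n}}\mathbf{u}_n\cdot\mathcal{F}_{\eta_n}(\xi)+\int_\omega\partial_t\eta_n\,\xi+\int_{\Omega_S}\partial_t\mathbf{d}_n\cdot\boldsymbol{\xi}$, shows H\"older equicontinuity in $t$ directly from the weak formulation, and uses Arzel\`a--Ascoli plus a density/mollification argument to get uniform-in-$(\xi,\boldsymbol{\xi})$ convergence, finally taking $(\xi,\boldsymbol{\xi})=(\partial_t\eta_n,\partial_t\mathbf{d}_n)$. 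For the zero-trace remainder it approximates $\mathbf{u}_n-\mathcal{F}_{\eta_n}(\partial_t\eta_n)$ in $(H^{1/4})'$ by compactly supported divergence-free fields, which can be written as $\mathcal{J}_{\delta_\sigma}\phi$ for a smooth $\delta_\sigma<\eta$, and then reuses the equicontinuity argument. No bound on $\partial_t\hat{\mathbf{w}}_n$ in a fixed dual space is ever needed.

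In your approach, two issues need repair. First, the plain composition $\hat{\mathbf{u}}_n=\mathbf{u}_n\circ\psi_{\eta_n}$ is neither divergence-free nor uniformly in $L^2_tH^1_x$: the latter would require $\nabla\eta_n\in L^\infty$, which $H^2_0(\omega)$ does not provide in two dimensions. You should use the Piola pull-back $\mathcal{J}_{\eta_n}^{-1}\mathbf{u}_n$ instead; by Lemma~\ref{lm:Piola} this is genuinely divergence-free on $\Omega_F$ but only bounded in $L^2_tW^{1,r}_x$ for $r<2$, which still embeds compactly into $L^2$. Second, feeding only the triples $(\mathcal{F}_{\eta_n}(\xi),\xi,\boldsymbol{\xi})$ into \eqref{eqn:weak-form} cannot yield a bound on $\partial_t\hat{\mathbf{w}}_n$ in the dual of a space dense in $L^2$: those triples carry prescribed interface trace $\xi\mathbf{e}_r$ and miss the entire zero-trace fluid sector. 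You must also test with $(\mathcal{J}_{\eta_n}\phi,0,\mathbf{0})$ for $\phi$ divergence-free with zero boundary values on $\Omega_F$, and with $(\mathbf{0},0,\boldsymbol{\xi}_0)$ for $\boldsymbol{\xi}_0$ vanishing on $\{r=R\}$; only then is the test space dense in the coupled divergence-free $L^2$ subspace on which Aubin--Lions can be run. The paper's functional-equicontinuity argument sidesteps both issues at the price of a more hands-on computation.
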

\begin{proof}

    The idea is to prove the following convergences 
    \begin{equation}
    \label{eqn:compact-1}
\begin{aligned}\int_{I}\int_{\Omega_{F}^{\eta_{n}}\left(t\right)}\mathbf{u}_{n}\cdot\mathcal{F}_{\eta_{n}}\left(\partial_{t}\eta_{n}\right)dxdt+\int_{I}\int_{\omega}\left(\partial_{t}\eta_{n}\right)^{2}dAdt & +\int_{I}\int_{\Omega_{S}}\left(\partial_{t}\mathbf{d}_{n}\right)^{2}dxdt\to\\
\int_{I}\int_{\Omega_{F}^{\eta}\left(t\right)}\mathbf{u}_{n}\cdot\mathcal{F}_{\eta}\left(\partial_{t}\eta\right)dxdt+\int_{I}\int_{\omega}\left(\partial_{t}\eta\right)^{2}dAdt & +\int_{I}\int_{\Omega_{S}}\left(\partial_{t}\mathbf{d}\right)^{2}dxdt
\end{aligned}
    \end{equation}

and
\begin{equation}\label{eqn:compact-2}
\begin{aligned}\int_{I}\int_{\Omega_{F}^{\eta_{n}}\left(t\right)}\mathbf{u}_{n}\cdot\left(\mathbf{u}_{n}-\mathcal{F}_{\eta_{n}}\left(\partial_{t}\eta_{n}\right)dxdt\right)\to\int_{I}\int_{\Omega_{F}^{\eta}\left(t\right)}\mathbf{u}\cdot\left(\mathbf{u}-\mathcal{F}_{\eta}\left(\partial_{t}\eta\right)\right)dxdt\end{aligned}
\end{equation}
as $n\to \infty$.
By adding \eqref{eqn:compact-1} and \eqref{eqn:compact-2} our claim follows immediately.

The proof of the two  convergences is presented in Subsection~\ref{ssec:1st-compactness} and Subsection~\ref{ssec:2nd-compactness}.
\end{proof}

Let us note that by an eventual diagonal argument we may prove that there exists a set $I_0 \subseteq I$ which is dense in $I$ and for which
\begin{equation}
\left(\mathbf{u}_{n}\left(t\right),\eta_{n}\left(t\right),\mathbf{d}_{n}\left(t\right)\right)\rightharpoonup\left(\mathbf{u}\left(t\right),\eta\left(t\right),\mathbf{d}\left(t\right)\right)
\end{equation}
in the appropriate function spaces of spatial variables for all $t\in I_0$.
\subsection{Proof of  \eqref{eqn:compact-1}}\label{ssec:1st-compactness}

Let us consider arbitrary but fixed $\xi\in C_{0}^{\infty}\left(\omega\right)$ and $\boldsymbol{\xi}\in C^{\infty}\left(\Omega_{S}\right)$ such that $\text{tr}_{\Gamma}\boldsymbol{\xi}=\xi\mathbf{e}_{r}$ in $\omega$.

We consider the functionals
\begin{equation}
c_{\xi, \boldsymbol{\xi},n}\left(t\right):=\int_{\Omega_{F}^{\eta_{n}}\left(t\right)}\mathbf{u}_{n}\cdot\mathcal{F}_{\eta_{n}}\left(\xi\right)dx+\int_{\omega}\partial_{t}\eta_{n} \xi\  dA+\int_{\Omega_{S}}\partial_{t}\mathbf{d}_{n}\cdot\boldsymbol{\xi}\ dA ,
\end{equation}
and 
\begin{equation}
c_{\xi, \boldsymbol{\xi}}\left(t\right):=\int_{\Omega_{F}^{\eta}\left(t\right)}\mathbf{u}\cdot\mathcal{F}_{\eta}\left(\xi\right)dx+\int_{\omega}\partial_{t}\eta \xi \ dA+\int_{\Omega_{S}}\partial_{t}\mathbf{d}\cdot\boldsymbol{\xi}\ dA.
\end{equation}
Our aim is to prove that 
\begin{equation}
    \sup_{t\in I}\sup_{\left\Vert \xi, \boldsymbol{\xi}\right\Vert _{L_{x}^{2}}\le1}\left|c_{\xi, \boldsymbol{\xi},n}\left(t\right)-c_{\xi, \boldsymbol{\xi}}\left(t\right)\right|\xrightarrow{n\to\infty}0.
\end{equation}
We follow the following steps.
\begin{enumerate}
    \item We prove that for each fixed $\xi, \boldsymbol{\xi}\in H^{2},\left\Vert (\xi, \boldsymbol{\xi})\right\Vert _{H^{2}_{x}}\le1$ it follows that 
    \begin{equation}
        c_{\xi, \boldsymbol{\xi},n}\left(t\right)\to c_{\xi, \boldsymbol{\xi}}\left(t\right)\quad\text{\text{in}}\ C\left(\overline{I}\right).
    \end{equation}
We claim that there exists $\alpha \in (0,1)$ for which
\begin{equation}\label{eqn:holder-cont-cbn}
 \sup_{\left\Vert \left(\xi, \boldsymbol{\xi}\right)\right\Vert _{H_{x}^{2}}\le1}\left|c_{\xi, \boldsymbol{\xi},n}\left(t\right)-c_{\xi, \boldsymbol{\xi},n}\left(s\right)\right|\lesssim\left|t-s\right|^{\alpha}\quad\forall \ t,s\in I.
\end{equation}
Indeed, using the weak formulation \eqref{eqn:weak-form} we find  that\footnote{Recall the definition of the convective term from \eqref{eqn:functionals}.}
\begin{equation}
\begin{aligned}\left|c_{\xi, \boldsymbol{\xi},n}\left(t\right)-c_{\xi, \boldsymbol{\xi},n}\left(s\right)\right|\le & \left|\int_{s}^{t}\int_{\Omega_{F}^{\eta_{n}}}-\mathbf{\mathbf{u}}_{n}\cdot\mathcal{F}_{\eta_{n}}\left(\xi\right)+\nabla\mathbf{\mathbf{u}}_{n}:\nabla\mathcal{F}_{\eta_{n}}\left(\xi\right)dxdt\right|+\\
 & \left|\int_{s}^{t}b\left(\tau,\mathbf{\mathbf{u}}_{n},\mathbf{\mathbf{u}}_{n},\mathcal{F}_{\eta_{n}}\left(\xi\right)\right)d\tau\right|+ \text{other-terms}
\end{aligned}
\end{equation}
for all $t,s \in I$.
Let us estimate the most tedious term, the convective term, the other  following in a similar  way. 
As usual, by interpolation we get that $\mathbf{u}_{n}\in L_{t}^{\infty}L_{x}^{2}\cap L_{t}^{2}W_{x}^{1,2}\hookrightarrow L_{t,x}^{10/3}$.

We have 
\begin{equation} 
\begin{aligned}&\left|\int_{s}^{t}b\left(\tau,\mathbf{\mathbf{u}}_{n},\mathbf{\mathbf{u}}_{n},\mathcal{F}_{\eta_{n}}\left(\xi\right)\right)d\tau\right| \\ &\le \int_{s}^{t}\int_{\Omega_{F}^{n}\left(t\right)}\left|\mathbf{\mathbf{u}}_{n}\right|\left|\nabla\mathbf{\mathbf{u}}_{n}\right|\left|\mathcal{F}_{\eta_{n}}\left(\xi\right)\right|dxd\tau+
\int_{s}^{t}\int_{\Omega_{F}^{n}\left(t\right)}\left|\mathbf{\mathbf{u}}_{n}\right|^{2}\left|\nabla\mathcal{F}_{\eta_{n}}\left(\xi\right)\right|dxd\tau
\le  C_{1}+C_{2}.
\end{aligned}
\end{equation}
 We use the energy-estimates to get that
\begin{equation}
 \begin{aligned}\left|C_{1}\right|\le & \left\Vert \mathbf{u}_{n}\right\Vert _{L_{t}^{\infty}L_{x}^{2}}\left\Vert \nabla\mathbf{u}_{n}\right\Vert _{L_{t,x}^{2}}\left\Vert \mathcal{F}_{\eta_{n}}\left(\xi\right)\right\Vert _{L_{t}^{\infty}L_{x}^{\infty}}\left|t-s\right|^{1/2}\\
\lesssim & \sup_{t\in I}E_{n}\left(t\right)\cdot\left\Vert P_{in/out}\right\Vert _{L_{t}^{2}}\left(\left\Vert \eta_{n}\xi\right\Vert _{L_{t}^{\infty}L_{x}^{\infty}}\right)\left|t-s\right|^{1/2}
\lesssim  \left|t-s\right|^{1/2},
\end{aligned}
\end{equation}
and with a similar argument, we obtain
\begin{equation}
 \begin{aligned}\left|C_{2}\right| & \le\left\Vert \mathbf{u}_{n}\right\Vert _{L_{t}^{\infty}L_{x}^{2}}^{2}\left\Vert \nabla\mathcal{F}_{\eta_{n}}\left(\xi\right)\right\Vert _{L_{t,x}^{\infty}}\left|t-s\right|\lesssim\left|t-s\right|.
 \end{aligned}
\end{equation}
Hence, it is easy to see by standard estimates  that $\sup_{n\ge1}\sup_{t\in I}\left|c_{\xi, \boldsymbol{\xi},n}\left(t\right)\right|\lesssim1$ 
and also that  
$c_{\xi, \boldsymbol{\xi},n}\left(t\right)\to c_{\xi, \boldsymbol{\xi}}\left(t\right)$ in a distributional sense.
Consequently, by the Arzela-Ascoli Theorem we get that
\[c_{\xi, \boldsymbol{\xi},n}\left(t\right)\to c_{\xi, \boldsymbol{\xi}}\left(t\right)\quad\text{in}\ C\left(\overline{I}\right)\] for each fixed $\xi, \boldsymbol{\xi}$ with $\left\Vert (\xi, \boldsymbol{\xi})\right\Vert _{H^{2}\times H^{1}}\le1$.
\item 
Then, we claim the uniform convergence  w.r.t. $\left(\xi, \boldsymbol{\xi}\right)\in H^{2}\times H^{1}$, namely     \begin{equation}\label{eqn:claim-hn}
h_{n}\left(t\right):=\sup_{\left\Vert \left(\xi, \boldsymbol{\xi}\right)\right\Vert _{H^{2}\times H^{1}}\le1}\left|c_{\xi, \boldsymbol{\xi},n}\left(t\right)-c_{\xi, \boldsymbol{\xi}}\left(t\right)\right|\to0\quad\text{in}\ C\left(\overline{I}\right).
     \end{equation}
Indeed, for $t\in I_0$ we have in fact that $\mathbf{u}_{n}\left(t\right)\to\mathbf{u}\left(t\right)\ \text{in}\ H_{x}^{-1}$ due to the embedding $L^{2}\hookrightarrow\hookrightarrow H^{-1}$. Regarding the term $\int_{\Omega_{F}^{\eta_{n}}\left(t\right)}\mathbf{u}_{n}\cdot\mathcal{F}_{\eta_{n}}bdx-\int_{\Omega_{F}^{\eta}\left(t\right)}\mathbf{u}\cdot\mathcal{F}_{\eta}bdx$. Due to the fact that the extension operators $\mathcal{F}_{\eta_n, \eta}$ are defined on a steady cylinder $C$ (cf. Proposition~\ref{prop:estimates-extension-operator}) we can extend $\mathbf{u}_n, \mathbf{u}$ by zero and write 

\begin{equation}
\begin{aligned}\sup_{\left\Vert b\right\Vert _{H_{x}^{2}}\le1}\left|\int_{\Omega_{F}^{\eta_{n}}\left(t\right)}\mathbf{u}_{n}\cdot\mathcal{F}_{\eta_{n}}\left(\xi\right)dx-\int_{\Omega_{F}^{\eta}\left(t\right)}\mathbf{u}\cdot\mathcal{F}_{\eta}\left(\xi\right)dx\right| & \lesssim\\
\sup_{\left\Vert b\right\Vert _{H_{x}^{2}}\le1}\left\Vert \mathbf{u}_{n}\left(t\right)-u\left(t\right)\right\Vert _{H_{x}^{-1}}\left\Vert \mathcal{F}_{\eta_{n}}\left(\xi\right)\right\Vert _{H_{x}^{1}}+\left\Vert \mathbf{u}\left(t\right)\right\Vert _{L_{x}^{2}}\left\Vert \mathcal{F}_{\eta_{n}}\left(\xi\right)-\mathcal{F}_{\eta}\left(\xi\right)\right\Vert _{L_{x}^{2}} & \lesssim\\
\left\Vert \mathbf{u}_{n}-u\right\Vert _{L_{x}^{\infty}H_{x}^{-1}}+\left\Vert \left(\eta_{n}-\eta\right)\xi\right\Vert _{L_{t}^{\infty}L_{x}^{2}}+\left\Vert \left(\eta_{n}-\eta\right)\xi\right\Vert _{L_{t}^{\infty}L_{x}^{2}}^{1/2} & \lesssim\\
\left\Vert \mathbf{u}_{n}-u\right\Vert _{L_{x}^{\infty}H_{x}^{-1}}+\left\Vert \left(\eta_{n}-\eta\right)\right\Vert _{L_{t}^{\infty}L_{x}^{2}}+\left\Vert \left(\eta_{n}-\eta\right)\right\Vert _{L_{t}^{\infty}L_{x}^{2}}^{1/2} & \to0 ,
\end{aligned}
\end{equation}
where in the last estimate we used \eqref{eqn:cons-ext-3}.
This means  there exists a sequence of positive numbers $a_n\to 0$ such that 
\[
0\le h_{n}\left(t\right)\le a_{n}\quad\forall\ t\in I_{0}.
\]
Using now the density of $I_0$ in $I$, we can easily ensure that 

\[
h_{n}\left(t\right)\to0\quad\text{in}\ C\left(\overline{I}\right),
\]
which proves the claim \eqref{eqn:claim-hn}.
\item In this step, we aim to allow for $\left(\xi, \boldsymbol{\xi}\right)\in L_{x}^{2}\times H_{x}^{1}$. 
To this end, let us observe that the reasoning from the previous step yields the following estimate: for all $\varepsilon>0$ there exists $n(\varepsilon)\ge 1$ such that for all $n>n\left(\varepsilon\right)$ it holds that
\begin{equation}\label{eqn:ineq-bb}
\left|c_{\xi, \boldsymbol{\xi},n}\left(t\right)-c_{\xi, \boldsymbol{\xi}}\left(t\right)\right|\le\varepsilon\left(\left\Vert \xi\right\Vert _{H_{x}^{2}}+\left\Vert \boldsymbol{\xi}\right\Vert _{H_{x}^{1}}\right) \quad \forall\left(\xi, \boldsymbol{\xi}\right)\in H_{x}^{2}\times H_{x}^{1}.
\end{equation}

Now for each $\xi\in L^{2}\left(\omega\right)$ extended by zero to $\mathbb{R}^{2}$ its standard mollification in the variables $\theta,z$, denoted
$\xi_\rho$ for  $\rho>0$. We can consider its correspondent $\boldsymbol{\xi}_{\rho}\in H^{1}\left(\Omega_{S}\right)$ with $\boldsymbol{\xi}_{\rho}\left(R,\cdot\right)=\xi_{\rho}\mathbf{e}_{r}$.
Plugging them in \eqref{eqn:ineq-bb} yields 
\begin{equation}
    \begin{aligned}\left|c_{\xi_{\rho},\boldsymbol{\xi}_{\rho},n}\left(t\right)-c_{\xi_{\rho},\boldsymbol{\xi}_{\rho}}\left(t\right)\right|  \le\varepsilon\left(\left\Vert \xi_{\rho}\right\Vert _{H_{x}^{2}}+\left\Vert \boldsymbol{\xi}_{\rho}\right\Vert _{H_{x}^{1}}\right)
  \lesssim\frac{\varepsilon}{\rho^{N}}\left(\left\Vert \xi\right\Vert _{L_{x}^{2}}+\left\Vert \boldsymbol{\xi}\right\Vert _{H_{x}^{1}}\right)
\lesssim\sqrt{\varepsilon}\left(\left\Vert \xi\right\Vert _{L_{x}^{2}}+\left\Vert \boldsymbol{\xi}\right\Vert _{H_{x}^{1}}\right),
\end{aligned}
\end{equation}
for a certain positive integer $N$ and $\rho=\varepsilon^{1/(2N)}$. Letting now $\varepsilon\to 0$, we conclude that 
\begin{equation}
g_{n}\left(t\right):=\sup_{\left\Vert \left(\xi, \boldsymbol{\xi}\right)\right\Vert _{L_{x}^{2}\times H_{x}^{1}}}\left|c_{\xi, \boldsymbol{\xi},n}\left(t\right)-c_{\xi, \boldsymbol{\xi}}\left(t\right)\right|\to0\quad\text{in}\ C\left(\overline{I}\right).
\end{equation}
\item Finally, we take $\left(\xi,\boldsymbol{\xi}\right)=\left(\partial_{t}\eta_{n}\left(t,\cdot\right),\partial_{t}\mathbf{d}_{n}\left(t,\cdot\right)\right)$  and we obtain \eqref{eqn:compact-1}.
\end{enumerate}
\subsection{Proof of \eqref{eqn:compact-2}}\label{ssec:2nd-compactness}
We aim to prove \eqref{eqn:compact-2}. 
We proceed as follows:
\begin{enumerate}
\item Let $\sigma>0$ and $\delta_{\sigma}$ be such that 
\[
\delta_{\sigma}\in C_{per}^{\infty}\left(I;C_{0}^{\infty}\left(\omega\right)\right),\quad\eta-\sigma<\delta_{\sigma}<\eta.
\]
We recall the Piola transform $\mathcal{J}_{\delta_{\sigma}}$ from Lemma~\ref{lm:Piola} and we claim that 
\begin{equation}\label{eqn:claim-piola}
\int_{I}\int_{\Omega_{F}^{\eta_{n}}\left(t\right)}\mathbf{u}_{n}\cdot\mathcal{J}_{\delta_{\sigma}}\phi dx\xrightarrow{n\to\infty}\int_{I}\int_{\Omega_{F}^{\eta}\left(t\right)}\mathbf{u}\cdot\mathcal{J}_{\delta_{\sigma}}\phi dx
\end{equation}
uniformly for all $\phi \in H^{1}_{\text{div}}(\Omega)$.
A few remarks are needed. 
First, since $\eta_n \to \eta$ in $C_{t,x}^{0}$, 

for sufficiently large $n>n(\sigma)$ we have that $\Omega_{F}^{\delta_{\sigma}}\subset\Omega_{F}^{\eta_{n}}$ and so the integrands in \eqref{eqn:claim-piola} are well-defined after extending $\mathcal{J}_{\delta_{\sigma}}\phi$ by zero.

Secondly, that the Piola mapping $\mathcal{J}_{\delta_{\sigma}}$  maps

\[
H:=\left\{ \mathbf{u}\in H^{1}\left(\Omega;\mathbb{R}^{3}\right):\text{div}\mathbf{u}=0,\ \mathbf{u}\times\mathbf{e}_{r}=\mathbf{0}\ \text{on}\ \Gamma,\ \mathbf{u}\times\mathbf{e}_{z}=\mathbf{0}\ \text{on}\ \Gamma_{in/out}\right\} 
\]
into
\[
\begin{aligned}H_{fl}^{\delta_{\sigma}}:= & \left\{ \mathbf{u}\in H^{1}\left(\Omega_{F}^{\delta_{\sigma}};\mathbb{R}^{3}\right):\text{div}\mathbf{u}=0,\ \mathbf{u}\times\mathbf{e}_{r}=0\ \text{on}\ \Gamma^{\delta_{\sigma}},\ \mathbf{u}\times\mathbf{e}_{z}=0\ \text{on}\ \Gamma_{in/out}\right\} \end{aligned}
.
\]

We recall that the Piola mapping $\mathcal{J}_{\delta_{\sigma}}$ maps $W_{x}^{1,2}$ into its correspondent with a continuity constant depending on $\left\Vert \nabla\delta_{\sigma}\right\Vert _{L^{\infty}_{x}}$, cf. Lemma~\ref{lm:Piola}.

The proof of \eqref{eqn:claim-piola} follows closely the steps already detailed in Subsection~\ref{ssec:1st-compactness} after defining the analogous functionals $c_{\phi,n}\left(t\right):=\int_{\Omega_{F}^{\eta_{n}}\left(t\right)}\mathbf{u}_{n}\cdot\mathcal{J}_{\delta_{\sigma}}\phi dx$ and its corresponding limit $c_{\phi}(t)$ for all $\phi \in H$ .

\item  Our next objective is to write $\mathbf{u}_{n}-\mathcal{F}_{\eta_{n}}\left(\partial_{t}\eta_{n}\right)\in L_{\text{div}}^{2}\left(\Omega_{\eta_{n}}\right)$ as $\mathbf{u}_{n}-\mathcal{F}_{\eta_{n}}\left(\partial_{t}\eta_{n}\right)=\mathcal{J}_{\delta_{\sigma}}\phi$ and use the convergence \eqref{eqn:claim-piola}. To this end we use the approximation Lemma~\ref{lemma:approx-psi}, which for any $\varepsilon>0$ there exists $\sigma_{\varepsilon}>0$ and a function 
$
\psi_{t,n}\in C_{\text{0,div}}^{\infty}\left(\Omega_{\eta_{n}}\right)$
such that 

\begin{equation}\label{eqn:prop-approx}
\text{supp} \ \psi_{t,n}\subset\Omega_{\eta_{n}-3\sigma_{\varepsilon}} \ \text{and} \ 
\left\Vert \mathbf{u}_{n}-\mathcal{F}_{\eta_{n}}\left(\partial_{t}\eta_{n}\right)-\psi_{t,n}\right\Vert _{\left(H^{1/4}\left(\mathbb{R}^{3}\right)\right)^{\prime}}<\varepsilon.
\end{equation}
Now, since $\eta_n \to \eta$ uniformly it follows that for  all $n>n(\varepsilon)$ holds the following
\[
\Omega_{\eta_{n}-3\sigma_{\varepsilon}}\subset\Omega_{\eta-2\sigma_{\varepsilon}}\subset\Omega_{\delta_{\sigma_{\varepsilon}}}
\]
so by extending $\psi_{t,n}$ by zero to $\Omega_{\delta_{\sigma_{\varepsilon}}}$ it follows that we can write 
\[
\psi_{t,n}=\mathcal{J}_{\delta_{\sigma_{\varepsilon}}}\phi,\ \phi\in H,
\]
and then from \eqref{eqn:claim-piola}, it follows that 
\begin{equation}
\lim_{n\to\infty}\int_{I}\int_{\Omega_{F}^{\eta_{n}}\left(t\right)}\mathbf{u}_{n}\cdot\psi_{t,n}dxdt=\lim_{n\to\infty}\int_{I}\int_{\Omega_{F}^{\eta}\left(t\right)}\mathbf{u}\cdot\psi_{t,n}dxdt.
\end{equation}
Using \eqref{eqn:prop-approx}, we obtain that
\begin{equation}
    \begin{aligned}\lim_{n\to\infty}\int_{I}\int_{\Omega_{F}^{\eta_{n}}\left(t\right)}\mathbf{u}_{n}\cdot\left(\mathbf{u}_{n}-\mathcal{F}_{\eta_{n}}\left(\partial_{t}\eta_{n}\right)\right)dxdt & =\lim_{n\to\infty}\int_{I}\int_{\Omega_{F}^{\eta}\left(t\right)}\mathbf{u}\cdot\left(\mathbf{u}_{n}-\mathcal{F}_{\eta_{n}}\left(\partial_{t}\eta_{n}\right)\right)dxdt\\
 & =\int_{I}\int_{\Omega_{F}^{\eta}\left(t\right)}\mathbf{u}\cdot\left(\mathbf{u}-\mathcal{F}_{\eta}\left(\partial_{t}\eta\right)\right)dxdt,
\end{aligned}
\end{equation}
where the last convergence only makes use of the weak-convergence. It follows that 
\eqref{eqn:compact-2} holds.

\end{enumerate}

\section{The main results}\label{sec:main-construction}
In this section we prove the existence of a weak solution, that is we prove Theorem~\ref{thm:main}.
\begin{proof}[Proof of Corollary~\ref{thm-corollary}]
    It is a simplified proof of Theorem~\ref{thm:main}. In fact we only need to adapt the definition of the spaces $V_{soln}^{\eta}$ and $V_{test}^{\eta}$ suitably. The Subsection~\ref{ssec:periodicity} can be skipped, as we already know the initial data hence we do not need to prove that it exists via a fixed-point argument.
    
    In order to obtain a set-valued fixed-point result as in Subsection~\ref{ssec:set-v-fix-pt} we need to restrict the time-interval to $\left(0,T_{\max}\right)$ with $T_{\max}$ depending on the initial data. For a detailed proof we refer to \cite{LR14}.
\end{proof}

The proof of Theorem~\ref{thm:main} will be presented in the rest of the section and it is divided into several steps. 
\subsection{Construction of the decoupled solution}
Suppose that for given 
\[\delta\in C_{\text{per}}^{\infty}\left(I;C_{0}^{\infty}\left(\omega;\mathbb{R}\right)\right),\quad \mathbf{v}\in L_{per}^{2}\left(I;L^{2}\left(\mathbb{R}^{3}\right)\right)
\]
we prescribe a fluid domain 
\[
\Omega_{F}^{\delta}\left(t\right):=\left\{ \left(x,y,z\right)\in\mathbb{R}^{3}:\sqrt{x^{2}+y^{2}}<R+\delta\right\} 
\]
Now we aim to solve the FSI problem \eqref{eqn:weak-form}  for the fluid domain $\Omega_{F}^{\delta}\left(t\right)$ and for this we introduce
\begin{definition}
We call $\left(\mathbf{u},\eta,\mathbf{d}\right)\in V_{soln}^{\delta}$ a weak-solution for the \emph{decoupled} and \emph{linearized} around $(\delta,\mathbf{v})$ problem if 
\begin{equation}
\begin{aligned}&\int_{I}\int_{\Omega_{F}^{\delta}\left(t\right)}-\mathbf{u}\cdot\partial_{t}\mathbf{q}+\nabla\mathbf{u}:\nabla\mathbf{q}\ dx \ dt+\int_{I}b\left(t,\mathbf{u},\mathbf{v},\mathbf{q}\right) \ dt  +
\int_{I}\int_{\omega}-\frac{1}{2}\partial_{t}\eta\partial_{t}\delta\left(R+\delta\right)\xi-\partial_{t}\eta\partial_{t}\xi \ dA\ dt \\ &+\int_{I}K\left(\eta,\xi\right)\ dt 
\int_{I}\int_{\Omega_{S}}-\partial_{t}\mathbf{d}\cdot\partial_{t}\bm{\xi}\ dx\ dt+\int_{I}a_{S}\left(\mathbf{d},\bm{\xi}\right)\ dt =
\int_{I}\left\langle F\left(t\right),\mathbf{q}\right\rangle\ dt ,
\end{aligned}
\end{equation}
holds for all $\left(\mathbf{q},\xi,\boldsymbol{\xi}\right)\in V_{test}^{\delta}$.

\end{definition}

\subsubsection{The Galerkin approximation}
Let $\left(Y_{k}\right)_{k\ge1}$ be a smooth basis of $H_{0}^{2}\left(\omega\right)$.
In order to extend it to a system of linear-independent vectors in $\Omega_S$ we first fix a cut-off function
\[
s\in C^{\infty}\left(R,R+H\right),\ s\left(R\right)=1,\ s\left(R+H\right)=0,
\]
and then the operator 
\[
\mathcal{F}_{S}:H_{0}^{2}\left(\omega;\mathbb{R}\right)\mapsto H^{1}\left(\Omega_{S}\right),\ \mathcal{F}_{S}\left(\xi\right)=s\left(r\right)\xi\left(\theta,z\right)\mathbf{e}_{r}\left(\theta\right),
\]
Obviously, $\mathcal{F}_{S}\left(\xi\right)\left(R,\cdot\right)=\xi\mathbf{e}_{r}$. We set
\[
Y_{k}^{S}:=\mathcal{F}_{S}\left(Y_{k}\right),\ k\ge 1,
\]
while for the fluid domain, employing the extension operator $\mathcal{F}_{\delta}$ (from Proposition~\ref{prop:estimates-extension-operator}), we define
\[
Y_{k}^{F}:=\mathcal{F}_{\delta}\left(Y_{k}\right),\ k\ge1.
\]

Next we consider $\left\{ \mathbf{\tilde{Z}}_{k}^{F}\right\} _{k\ge1}$, an $H^{1}$ basis of the space 
\[
\mathcal{H}:=\left\{ \mathbf{v}\in H_{\text{div}}^{1}\left(\Omega\right):\text{tr}_{\Gamma}\mathbf{v}=\mathbf{0},\ \text{tr}_{\Gamma_{in/out}}\mathbf{v}\times\mathbf{e}_{z}=0\right\} .
\]
This basis can be constructed by considering the eigenvalues of the  operator 
\[
\Lambda_{\xi}:L_{\text{div}}^{2}\left(\Omega\right)\mapsto L_{\text{div}}^{2}\left(\Omega\right),\quad\Lambda_{\xi}\left(\mathbf{f}\right):=\mathbf{v},
\]
where for each $\xi\in L^{2}\left(\Gamma_{in/out}\right)$ with $\int_{\Gamma_{in/out}}\xi \ dA=0$, we solve the Stokes problem
\[
\begin{cases}
\Delta\mathbf{v}+\nabla\pi=\mathbf{f} & \text{in}\ \Omega,\\
\text{div}\ \mathbf{v}=0 & \text{in}\ \Omega,\\
\mathbf{v}=\pm\xi\mathbf{e}_{1} & \text{on}\ \Gamma_{in/out},\\
\mathbf{v}=\mathbf{0} & \text{on}\ \Gamma.
\end{cases}
\]
One can easily note that $\Lambda_{\xi}$ is symmetric and compact and thus its eigenvectors form a countable basis of $L_{\text{div}}^{2}\left(\Omega\right)$. By ranging $\xi$ in a countable dense $L^2$ basis of the space $\left\{ \xi:\xi\in L^{2}\left(\Gamma_{in/out}\right),\ \int\xi=0\right\}$, we obtain the basis $\left\{ \mathbf{\tilde{Z}}_{k}^{F}\right\} _{k\ge1}$ and this can be proven to be a basis of the space $\left\{ \mathbf{v}\in H^{1}\left(\Omega\right):\text{tr}_{\Gamma_{in/out}}\mathbf{v}=\pm\xi\mathbf{e}_{1}\right\} $.\footnote{The argument is standard in the theory of Navier-Stokes equations, see for example \cite[Section 2.5]{Seregin-book}. }

 Using the Piola mapping, we set
\[
\mathbf{Z}_{k}^{F}:=\mathcal{J}_{\delta}\mathbf{\tilde{Z}}_{k}^{F},
\]
and we obtain an $H^{1}$ basis of the space 
\[
\mathcal{H}_{\delta}:=\left\{ \mathbf{v}\in H_{\text{div}}^{1}\left(\Omega_{F}^{\delta}\left(t\right)\right):\text{tr}_{\Gamma^{\delta}}\mathbf{v}=\mathbf{0},\ \text{tr}_{\Gamma_{in/out}}\mathbf{v}\times\mathbf{e}_{z}=0\right\} . 
\]

Finally, let us set $\left\{ \mathbf{Z}_{k}^{S}\right\} _{k\ge1}$, a basis for the space 
\[
\left\{ \mathbf{d}\in H^{1}\left(\Omega_{S}\right):\mathbf{d}=\mathbf{0}\ \text{\text{on}\ \ensuremath{\Gamma\cup\Gamma_{in/out}^{s}}}\right\}. 
\]

We may join all the above families into the formula
\begin{equation}\label{eqn:XK-defn}
\left(\mathbf{X}_{k}^{F},X_{k},\mathbf{X}_{k}^{S}\right):=\begin{cases}
\left(\mathbf{Y}_{k}^{F},Y_{k},\mathbf{Y}_{k}^{S}\right), & k\ \mbox{is }\text{odd}\\
\left(\mathbf{Z}_{k}^{F},0,\mathbf{Z}_{k}^{S}\right), & k\ \mbox{is } \text{even}.
\end{cases}
\end{equation}

\begin{remark}
The family $\left(\mathbf{X}_{k}^{F},X_{k},\mathbf{X}_{k}^{S}\right)_{k\ge1}$ is dense in $V_{test}^{\delta}$ by construction.
The rough idea is to write 
\[
\begin{aligned}\left(\mathbf{q},\xi,\boldsymbol{\xi}\right)\sim & \left(\mathbf{q}-\mathcal{F}_{\delta}\left(\xi\right),0,\boldsymbol{\xi}-\mathcal{F}_{S}\left(\xi\right)\right)+\left(\mathcal{F}_{\delta}\left(\xi_{n}\right),\xi_{n},\mathcal{F}_{S}\left(\xi_{n}\right)\right)+\\
 & \left(\mathcal{F}_{\delta}\left(\xi-\xi_{n}\right),\xi-\xi_{n},\mathcal{F}_{S}\left(\xi-\xi_{n}\right)\right).
\end{aligned}
\]
The argument is then very similar to the one of \cite[p. 237]{LR14}.
\end{remark}
We can now make the ansatz  
\begin{equation}\label{eqn:ansatz-un}
\begin{aligned}\mathbf{u}_{n}\left(t,x\right):= & \sum_{k=1}^{n}\left(\mathbf{a}_{n}^{k}\right)^{\prime}\left(t\right)\mathbf{X}_{k}^{F},\quad\text{in}\ I\times\Omega_{F}^{\delta},\\
\eta_{n}\left(t,x\right):= & \sum_{k=1}^{n}\left(\mathbf{a}_{n}^{k}\right)\left(t\right)X_{k}\left(x\right),\quad\text{in}\ I\times\omega,\\
\mathbf{d}_{n}\left(t,x\right):= & \sum_{k=1}^{n}\left(\mathbf{a}_{n}^{k}\right)\left(t\right)\mathbf{X}_{k}^{S}\left(x\right),\quad\text{in}\ I\times\Omega_{S},
\end{aligned}
\end{equation}
where the unknown $\mathbf{a}_{n}\in C^{2}\left(I;\mathbb{R}^{n}\right)$ is obtained by solving the  system 
\begin{equation}
    \begin{aligned}\int_{\Omega_{F}^{\delta}\left(t\right)}-\mathbf{u}_{n}\cdot\partial_{t}\mathbf{X}_{k}^{F}+\nabla\mathbf{u}_{n}:\nabla\mathbf{X}_{k}^{F}\ dx+b\left(t,\mathbf{u}_{n},\mathbf{v},\mathbf{X}_{k}^{F}\right) +
\int_{\omega}-\frac{1}{2}\left(\partial_{t}\eta_{n}\right)^{2}X_{k}\left(R+\delta\right)+\partial_{tt}\eta_{n}X_{k}\ dA &\\ +K\left(\eta_{n},X_{k}\right)  +
\int_{\Omega_{S}}\partial_{tt}\mathbf{d}_{n}\cdot\mathbf{X}_{k}^{S}dx+a_{S}\left(\mathbf{d}_{n},\mathbf{X}_{k}^{S}\right) =
\left\langle F\left(t\right),\mathbf{X}_{k}^{F}\right\rangle \quad t\in I,k\in\overline{1,n} .
\end{aligned}
\end{equation}
In order to obtain meaningful energy estimates when testing with $\left(\mathbf{u}_{n},\partial_{t}\eta_{n},\partial_{t}\mathbf{d}_{n}\right)$ we need to rewrite the system to an equivalent form.
Thus, we obtain the equivalent system
\begin{equation}\label{eqn:Galerkin}
   \begin{aligned}
   &\frac{d}{dt}\int_{\Omega_{F}^{\delta}\left(t\right)}\frac{\mathbf{u}_{n}\cdot\mathbf{X}_{k}^{F}}{2}\ dx +\int_{\Omega_{F}^{\delta}\left(t\right)}\frac{\partial_{t}\mathbf{u}_{n}\cdot\mathbf{X}_{k}^{F}-\mathbf{u}_{n}\cdot\partial_{t}\mathbf{X}_{k}^{F}}{2}\ dx +
\int_{\Omega_{F}^{\delta}\left(t\right)}\nabla\mathbf{u}_{n}:\nabla\mathbf{X}_{k}^{F}\ dx+b\left(t,\mathbf{u}_{n},\mathbf{v},\mathbf{X}_{k}^{F}\right)\\&+
\int_{\omega}\partial_{tt}\eta_{n}X_{k}\ dA+K\left(\eta_{n},X_{k}\right) +
\int_{\Omega_{S}}\partial_{tt}\mathbf{d}_{n}\cdot\mathbf{X}_{k}^{S}\ dx+a_{S}\left(\mathbf{d}_{n},\mathbf{X}_{k}^{S}\right) =
\left\langle F\left(t\right),\mathbf{X}_{k}^{F}\right\rangle \quad\forall\ t\in I,\ k\in\overline{1,n} .
\end{aligned}
\end{equation}
Let us now discuss the solvability of \eqref{eqn:Galerkin}. The mass matrix corresponding to the 2nd order terms is given by 
\[
M_{ij}\left(t\right):=\int_{\Omega_{F}^{\delta}\left(t\right)}\mathbf{X}_{i}^{F}\cdot\mathbf{X}_{j}^{F}\ dx+\int_{\omega}X_{i}X_{j}\ dA+\int_{\Omega_{S}}\mathbf{X}_{i}^{S}\cdot\mathbf{X}_{j}^{S}\ dx\quad i,j=\overline{1,n},
\]
which is obviously positive definite. 
A standard Picard-Lindel\"{o}f argument can thus be employed, see \cite[Proposition 3.27]{LR14}. 
Therefore, if we set $\left(\mathbf{a}_{n}\left(0\right),\mathbf{a}_{n}^{\prime}\left(0\right)\right)=:\left(\mathbf{a}_{n0},\mathbf{a}_{n1}\right)\in\mathbb{R}^{2\times n}$
then $\mathbf{a}_n$ exists locally on some interval $(0,T*)$ which can be extended to $I=(0,T)$
due to usual Gronwall arguments.

Indeed, let us notice that by \eqref{eqn:ansatz-un} we may multiply \eqref{eqn:Galerkin} by $\left(\mathbf{a}_{n}^{k}\right)^{\prime}\left(t\right)$, which corresponds to the test function $\left(\mathbf{u}_{n},\eta_{n},\mathbf{d}_{n}\right)$ and we obtain the analogue of \eqref{eqn:energy-balance}, namely the energy balance
\begin{equation}\label{eqn:En-energy-balance}
    \frac{d}{dt}E_{n}\left(t\right)+D_{n}\left(t\right)=\pm P_{in/out}\left(t\right)\int_{\Gamma_{in/out}}\left(\mathbf{u}_{n}\cdot\mathbf{n}\right)\ dA.
\end{equation}

\subsubsection{Establishing the periodicity}\label{ssec:periodicity}
As a consequence of the above discussion, the following Poincar\'{e} mapping  is thus well defined:
\begin{equation}\label{eqn:poincare-map}
    P_{n}:\mathbb{R}^{2\times n}\mapsto\mathbb{R}^{2\times n},\quad P_{n}:\left(\mathbf{a}_{n}\left(0\right),\mathbf{a}_{n}^{\prime}\left(0\right)\right)\mapsto\left(\mathbf{a}_{n}\left(T\right),\mathbf{a}_{n}^{\prime}\left(T\right)\right).
\end{equation}
\begin{remark}\label{rmk:an-C2}
    It is useful to notice that for a fixed $n$ we have that $\mathbf{a}_{n}$ from \eqref{eqn:Galerkin} is bounded in $C^{2}(I;\mathbb{R}^{n})$, although not uniformly w.r.t. $n$.
\end{remark}
\begin{proposition}\label{prop:Pn-fixed-point}
    The mapping $P_n$ from \eqref{eqn:poincare-map} has a fixed point.
\end{proposition}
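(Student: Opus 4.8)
The plan is to produce the fixed point by a standard Brouwer/degree argument on the finite--dimensional phase space $\mathbb{R}^{2\times n}$, with all the analytic content concentrated in a Galerkin--level version of the a priori estimates of Section~\ref{sec:formal-a-priori}. First note that $P_n$ is well defined on all of $\mathbb{R}^{2\times n}$: the right--hand side of \eqref{eqn:Galerkin} is a (polynomial, hence locally Lipschitz) vector field in $(\mathbf{a}_n,\mathbf{a}_n')$, Remark~\ref{rmk:an-C2} guarantees that every local solution extends to all of $I$, and classical continuous dependence on the initial data makes $P_n$ continuous. So the only real task is to exhibit a non--empty convex compact set that $P_n$ maps into itself (or, equivalently, a ball on whose boundary $I-P_n$ has non--vanishing degree).

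The a priori estimate supplies this. For $s\in[0,1]$ let $P_n^{(s)}$ be the Poincaré map of \eqref{eqn:Galerkin} with $P_{in/out}$ replaced by $sP_{in/out}$ (so $P_n^{(1)}=P_n$). If $(\mathbf{a}_{n0},\mathbf{a}_{n1})$ is a fixed point of $P_n^{(s)}$, the corresponding Galerkin solution is $T$--periodic and one may run the argument of Subsections~\ref{ssec:energy-balance}--\ref{ssec:unif-energ-est} at level $n$: multiplying \eqref{eqn:Galerkin} by $(\mathbf{a}_n^k)'(t)$ and summing gives the energy balance \eqref{eqn:En-energy-balance}, which together with periodicity yields $\int_I D_n\,dt\lesssim s^2\|P_{in/out}\|_{L^2}^2$ exactly as in \eqref{eqn:diffusion-estimate}; multiplying instead by $\mathbf{a}_n^k(t)$ and summing --- the discrete counterpart of testing the wave and solid equations with $\eta$, $\mathbf{d}$ and the fluid equation with $\mathcal{F}_\delta(\eta)$, which is meaningful precisely because the basis \eqref{eqn:XK-defn} was constructed from the extension operators $\mathcal{F}_\delta$ and $\mathcal{F}_S$ --- reproduces the chain \eqref{eqn:est-E-el-long}--\eqref{eqn:est-T31-T32} and bounds $\sup_{t\in I}E_{el,n}$. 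The only feature not already present in Section~\ref{sec:formal-a-priori} is that the convective contribution is now $b(t,\mathbf{u}_n,\mathbf{v},\cdot)$, \emph{linear} in $\mathbf{u}_n$; it is controlled by the a priori bounds on $\mathbf{u}_n$ and absorbed by Young's inequality, at the cost of a constant that depends on $\mathbf{v}$ but is finite (so for this inner step no smallness of $\|P_{in/out}\|$ is required). Hence $\sup_{t\in I}E_n\le R_n^2$ for every fixed point of every $P_n^{(s)}$; converting this into a bound on the initial data, all such fixed points lie in the interior of the ellipsoid $\{\mathcal{E}\le\rho\}$ for $\rho$ large enough, where $\mathcal{E}$ is the (positive definite) energy quadratic form at $t=0$, which coincides with the one at $t=T$ because $\delta$ is $T$--periodic, so $\{\mathcal{E}\le\rho\}$ is a solid ellipsoid --- convex and compact.

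Now one concludes. Since $\partial\{\mathcal{E}\le\rho\}$ contains no fixed point of any $P_n^{(s)}$, homotopy invariance of the Brouwer degree gives $\deg\!\big(I-P_n,\{\mathcal{E}<\rho\},0\big)=\deg\!\big(I-P_n^{(0)},\{\mathcal{E}<\rho\},0\big)$. For the unforced map the energy balance degenerates to $\frac{d}{dt}E_n+D_n=0$, so $\mathcal{E}(P_n^{(0)}x)\le\mathcal{E}(x)$; thus $P_n^{(0)}$ maps $\{\mathcal{E}\le\rho\}$ into itself, and its only fixed point is the origin (a $T$--periodic unforced Galerkin solution has $\int_I D_n=0$, whence $\mathbf{u}_n\equiv\mathbf 0$ and $\partial_t\eta_n\equiv\partial_t\mathbf{d}_n\equiv\mathbf 0$; testing the surviving stationary system with $(\mathbf{X}_k^F,X_k,\mathbf{X}_k^S)$ and summing then forces $\eta_n\equiv\mathbf 0$, $\mathbf{d}_n\equiv\mathbf 0$). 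The linear homotopy $\tau\mapsto\tau P_n^{(0)}$ has no zero of $I-\tau P_n^{(0)}$ on $\partial\{\mathcal{E}\le\rho\}$ (by energy non--increase for $\tau<1$, and because the only fixed point is interior for $\tau=1$), so $\deg(I-P_n^{(0)},\{\mathcal{E}<\rho\},0)=\deg(\operatorname{id},\{\mathcal{E}<\rho\},0)=1$. Therefore $\deg(I-P_n,\{\mathcal{E}<\rho\},0)=1\neq0$ and $P_n$ has a fixed point. (Equivalently, a modified energy $\tilde E_n:=E_n+\varepsilon\big(\int\mathbf{u}_n\cdot\mathcal{F}_\delta(\eta_n)+\int\partial_t\eta_n\,\eta_n+\int\partial_t\mathbf{d}_n\cdot\mathbf{d}_n\big)$ satisfies, for $\varepsilon$ small, $\frac{d}{dt}\tilde E_n+c\tilde E_n\lesssim|P_{in/out}(t)|^2$, so a suitable sub--level ellipsoid of $\tilde E_n$ is invariant under $P_n$ and Brouwer's theorem applies directly.)

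The main obstacle is the a priori estimate for periodic Galerkin trajectories, i.e. showing that the formal computations of Section~\ref{sec:formal-a-priori} survive the Galerkin discretisation: one must verify that the tuple corresponding to multiplying \eqref{eqn:Galerkin} by $\mathbf{a}_n^k$ is a legitimate test element, and handle the convective term $b(t,\mathbf{u}_n,\mathbf{v},\cdot)$ and the geometric term $\tfrac12(\partial_t\eta_n)^2(R+\delta)$ exactly as in \eqref{eqn:est-E-el-long}--\eqref{eqn:est-T31-T32}; the remaining ingredients --- continuity of $P_n$, the degree bookkeeping, and the identification of the trivial fixed point at $s=0$ --- are routine.
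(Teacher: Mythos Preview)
Your argument is correct, but the paper takes a different and somewhat shorter route. The paper applies the Leray--Schauder (Schaefer) fixed point theorem directly to $P_n$: continuity and compactness are immediate in finite dimensions (via Remark~\ref{rmk:an-C2}), and the only substantial step is to bound the set $LS:=\{x:\ x=\lambda P_n(x)\text{ for some }\lambda\in[0,1]\}$. The key observation is that $x=\lambda P_n(x)$ together with the periodicity of $\delta$ and the fact that $E_n$ is a \emph{quadratic} form in $(\mathbf{a}_n,\mathbf{a}_n')$ yields $E_n(0)=\lambda^2 E_n(T)\le E_n(T)$; this inequality replaces true periodicity in the diffusion estimate \eqref{eqn:diffusion-estimate}, and from there the machinery of Subsection~\ref{ssec:unif-energ-est} runs verbatim at the Galerkin level to produce \eqref{eqn:uniform-En-estimate}.

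By contrast, you homotope in the forcing amplitude $s$ rather than in the scaling $\lambda$ of the map, which obliges you to analyse the unforced case $s=0$ separately and compute $\deg(I-P_n^{(0)})$ via a second homotopy. This is sound --- your identification of the trivial fixed point at $s=0$ (using $\int_I D_n=0\Rightarrow \mathbf{u}_n\equiv0,\ \partial_t\mathbf{d}_n\equiv0$ by Poincar\'e, then the stationary system forcing $\eta_n\equiv0,\ \mathbf{d}_n\equiv0$) is correct --- but it costs an extra page that Leray--Schauder avoids. Your remark that no smallness of $\|P_{in/out}\|_{L^2_t}$ is needed at this linearised Galerkin stage is accurate (the convective term $b(t,\mathbf{u}_n,\mathbf{v},\cdot)$ and the geometric term involve the \emph{given} smooth $\mathbf{v},\delta$, so Young's inequality with $\varepsilon$ suffices); the paper does not exploit this, simply carrying the smallness hypothesis through since it is required later anyway. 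Your parenthetical Lyapunov alternative --- building a modified energy $\tilde E_n$ with $\frac{d}{dt}\tilde E_n+c\tilde E_n\lesssim|P_{in/out}|^2$ and applying Brouwer on a sublevel set --- is in fact closest in spirit to the paper's approach and would be the cleanest of your three options.
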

\begin{proof}
    We shall apply the Leray-Schauder fixed point theorem.

    To this end we check the
\begin{itemize}
    \item Compactness: Let $\left|\left(\mathbf{a}_{n_{k}}\left(0\right),\mathbf{a}_{n_{k}}^{\prime}\left(0\right)\right)\right|\le1$ for all $k \ge 1$. From Remark~\ref{rmk:an-C2} we have that there exists a subsequence of $\mathbf{a}_{n_{k}}$, which we don't relabel, and a function $\mathbf{a}_{n}\in C^{2}\left(I\right)$ for which $\mathbf{a}_{n_{k}}\to\mathbf{a}_{n}$ in $C^{1}(I)$. It is then clear than $\left(\mathbf{a}_{n_{k}}\left(T\right),\mathbf{a}_{n_{k}}^{\prime}\left(T\right)\right)_{k}$ is convergent.

    \item  Continuity: Let $\left(\mathbf{a}_{n_{k}}\left(0\right),\mathbf{a}_{n_{k}}^{\prime}\left(0\right)\right)\to\left(\mathbf{a}_{n0},\mathbf{a}_{n1}\right)$ as $k\to \infty$. With $\mathbf{a}_n$ from above it is clear that $\mathbf{a}_n$ solves \eqref{eqn:Galerkin} with initial data $\left(\mathbf{a}_{n0},\mathbf{a}_{n1}\right)$. Then $\left(\mathbf{a}_{n_{k}}\left(T\right),\mathbf{a}_{n_{k}}^{\prime}\left(T\right)\right)\to\left(\mathbf{a}_{n}\left(T\right),\mathbf{a}_{n}^{\prime}\left(T\right)\right)$ which proves the continuity.
    \item Boundedness: we need to check that the set 
\[
LS:=\left\{ \left(\mathbf{a}_{n0},\mathbf{a}_{n1}\right):\left(\mathbf{a}_{n0},\mathbf{a}_{n1}\right)=\lambda P_{n}\left(\mathbf{a}_{n0},\mathbf{a}_{n1}\right)\ \text{for some }\lambda\in\left[0,1\right]\right\} 
\]
is uniformly bounded. The crucial observation is that any initial data $\left(\mathbf{a}_{n0},\mathbf{a}_{n1}\right)\in LS$ produces a solution $\mathbf{a}_n$ of $\eqref{eqn:Galerkin}$ which enjoys the property 
\[
\left(\mathbf{a}_{n}\left(0\right),\mathbf{a}_{n}^{\prime}\left(0\right)\right)=\lambda\left(\mathbf{a}_{n}\left(T\right),\mathbf{a}_{n}^{\prime}\left(T\right)\right),\quad\lambda\in\left[0,1\right]
\]
and thus
\begin{equation}\label{eqn:lambda-energy}
E_{n}\left(0\right)=\lambda^{2}E_{n}\left(T\right)\le E_{n}\left(T\right).
\end{equation}
Now integrating \eqref{eqn:En-energy-balance} on $I$ and using \eqref{eqn:lambda-energy} we find out that 
\begin{equation}\label{eqn:diff-Dn}
\int_{I}D_{n}\left(t\right)dt:=\int_{I}\int_{\Omega_{F}^{\delta}\left(t\right)}\left|\nabla\mathbf{u}_{n}\right|^{2}dxdt+\int_{I}\int_{\Omega_{S}}\left|\partial_{t}\nabla\mathbf{d}\right|^{2}dxdt\lesssim\left\Vert P_{in/out}\right\Vert _{L_{t}^{2}}^{2}
\end{equation}
Now noticing that \eqref{eqn:diff-Dn} is the analogue of \eqref{eqn:diffusion-estimate} the reasoning follows in exactly the same manner as in Subsection~\ref{ssec:unif-energ-est}. Important is  that from \eqref{eqn:ansatz-un} it is possible to test \eqref{eqn:Galerkin} with $\left(\mathcal{F}_{\delta}\left(\eta_{n}\right),\eta_{n},\mathcal{F}_{S}\left(\eta_{n}\right)\right)$ at the discrete level. Thus we obtain the same energy bound as \eqref{eqn:formal-en-bound}, namely: for $\left\Vert P_{in/out}\right\Vert _{L_{t}^{2}}\le C_{0}=C_{0}\left(\texttt{data}\right)$ it follows that 
\begin{equation}\label{eqn:uniform-En-estimate}
    \sup_{t\in I}E_{n}\left(t\right)+\int_{I}D_{n}\left(t\right)dt\lesssim\left\Vert P_{in/out}\right\Vert _{L_{t}^{2}}^{2}\lesssim C_{0}^{2}.
\end{equation}
\end{itemize}
This proves Proposition~\ref{prop:Pn-fixed-point}.
    \end{proof}
From \eqref{eqn:uniform-En-estimate},
we may now obtain a weak limit as well as convergence up to a subsequence (not relabeled) 
\begin{equation}
\left(\mathbf{u}_{n},\eta_{n},\mathbf{d}_{n}\right)\rightharpoonup\left(\mathbf{u},\eta,\mathbf{d}\right)\quad\text{in}\ V_{soln}^{\delta}.
\end{equation}
Since the problem \eqref{eqn:Galerkin} is linearized we obtain by passing to the limit that for any sufficiently regular $\delta, \mathbf{v}$ there exists a time-periodic weak solution. More precisely, by employing a regularizing operator $\mathcal{R}_{\varepsilon}$ we obtain the following result
\begin{proposition}\label{prop:existence-dec-reg-per}
For any $\varepsilon>0$, there exists at least one time-periodic weak solution 
$\left(\mathbf{u},\eta,\mathbf{d}\right)=\left(\mathbf{u}_{\varepsilon},\eta_{\varepsilon},\mathbf{d}_{\varepsilon}\right)\in V_{soln}^{\mathcal{R}_{\varepsilon}\delta}$ of the  following decoupled and regularized problem:
\begin{equation}\label{eqn:dec-reg-eps}
\begin{aligned}&\int_{I}\int_{\Omega_{F}^{\mathcal{R}_{\varepsilon}\delta}\left(t\right)}-\mathbf{u}\cdot\partial_{t}\mathbf{q}+\nabla\mathbf{u}:\nabla\mathbf{q}dxdt+\int_{I}b\left(t,\mathbf{u},\mathcal{R}_{\varepsilon}\mathbf{v},\mathbf{q}\right)dt  +
\int_{I}\int_{\omega}-\frac{1}{2}\partial_{t}\eta\partial_{t}\mathcal{R}_{\varepsilon}\delta\left(R+\mathcal{R}_{\varepsilon}\delta\right)\xi-\partial_{t}\eta\partial_{t}\xi \ dA\ dt \\& +\int_{I}K\left(\eta,\xi\right)dt  +
\int_{I}\int_{\Omega_{S}}-\partial_{t}\mathbf{d}\cdot\partial_{t}\bm{\xi}dxdt+\int_{I}a_{S}\left(\mathbf{d},\bm{\xi}\right)dt  =
\int_{I}\left\langle F\left(t\right),\mathbf{q}\right\rangle dt\quad\forall\left(\mathbf{q},\xi,\boldsymbol{\xi}\right)\in V_{test}^{\mathcal{R}_{\varepsilon}\delta}.
\end{aligned}
\end{equation}
Furthermore, there exists a constant $C_0$ for which
\begin{equation}\label{eqn:dec-reg-estimate-eps}
\left\Vert P_{in/out}\right\Vert _{L_{t}^{2}}\le C_{0}\implies\sup_{t\in I}E_{\varepsilon}\left(t\right)+\int_{I}D_{\varepsilon}\left(t\right)\lesssim\left\Vert P_{in/out}\right\Vert _{L_{t}^{2}}^{2},
\end{equation}
where $E_{\varepsilon}$ is the analogue of $E$ from \eqref{eqn:energy-generic}.
    
\end{proposition}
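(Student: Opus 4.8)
The plan is to obtain $(\mathbf{u}_\varepsilon,\eta_\varepsilon,\mathbf{d}_\varepsilon)$ as the limit, as $n\to\infty$, of the Galerkin solutions constructed in the previous paragraph, now run with the \emph{smooth} frozen data $(\mathcal{R}_\varepsilon\delta,\mathcal{R}_\varepsilon\mathbf{v})$ in place of $(\delta,\mathbf{v})$. Since $\mathcal{R}_\varepsilon$ mollifies in space--time, $\mathcal{R}_\varepsilon\delta\in C^\infty_{\mathrm{per}}(I;C^\infty_0(\omega;\mathbb{R}))$ and $\mathcal{R}_\varepsilon\mathbf{v}$ is smooth; hence the construction of the moving-domain basis $(\mathbf{X}_k^F,X_k,\mathbf{X}_k^S)$ of \eqref{eqn:XK-defn} --- which relies on the extension operator of Proposition~\ref{prop:estimates-extension-operator} and the Piola map of Lemma~\ref{lm:Piola}, both needing a smooth displacement --- applies verbatim, and (shrinking $C_0$ if necessary) $\frac R2\le R+\mathcal{R}_\varepsilon\delta\le R+\frac H2$ so that $\Omega_{F}^{\mathcal{R}_\varepsilon\delta}(t)$ is well defined. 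By the Leray--Schauder argument of Proposition~\ref{prop:Pn-fixed-point} the Poincar\'e map $P_n$ of \eqref{eqn:poincare-map} admits a fixed point, so for each $n$ there is a \emph{time-periodic} Galerkin solution $(\mathbf{u}_n,\eta_n,\mathbf{d}_n)$ of the $\mathcal{R}_\varepsilon$-version of \eqref{eqn:Galerkin}; testing that system with $(\mathbf{u}_n,\partial_t\eta_n,\partial_t\mathbf{d}_n)$ and then with the extension test function $(\mathcal{F}_{\mathcal{R}_\varepsilon\delta}(\eta_n),\eta_n,\mathcal{F}_{S}(\eta_n))$ reproduces, exactly as in Subsection~\ref{ssec:unif-energ-est}, the bound \eqref{eqn:uniform-En-estimate}, uniformly in $n$.

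Next I would extract limits. All fluid integrals are first pulled back by the Piola map $\mathcal{J}_{\mathcal{R}_\varepsilon\delta}$ to the fixed reference domain $\Omega_{F}$, which renders the bounds coming from \eqref{eqn:uniform-En-estimate} genuinely uniform in $n$; thus $\mathbf{u}_n$, $\eta_n$, $\mathbf{d}_n$ are bounded in the norms defining $V_{fl}^{\mathcal{R}_\varepsilon\delta}$, $V_w$, $V_s$ respectively. Passing to a subsequence, $(\mathbf{u}_n,\eta_n,\mathbf{d}_n)\weakto(\mathbf{u}_\varepsilon,\eta_\varepsilon,\mathbf{d}_\varepsilon)$ weak-$*$ in these spaces. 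Since the periodic spaces are weakly closed and the kinematic constraints $\text{tr}_{\Gamma^{\mathcal{R}_\varepsilon\delta}(t)}\mathbf{u}_n=\partial_t\eta_n\mathbf{e}_r$ and $\mathbf{d}_n(t,R,\cdot)=\eta_n\mathbf{e}_r$ are linear (with traces weakly continuous on the relevant spaces), the limit lies in $V_{soln}^{\mathcal{R}_\varepsilon\delta}$.

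To identify the limit as a solution of \eqref{eqn:dec-reg-eps}, multiply the $k$-th equation of \eqref{eqn:Galerkin} by an arbitrary $\phi\in C^\infty_{\mathrm{per}}(I)$, integrate over $I$, and integrate by parts in time (periodicity kills the endpoint contributions), so that $\int_I\int_\omega\partial_{tt}\eta_n\,X_k\phi\,dA\,dt=-\int_I\int_\omega\partial_t\eta_n\,\partial_t(X_k\phi)\,dA\,dt$, and likewise for $\mathbf{d}_n$ and for the $\frac{d}{dt}$-term arising from the moving domain. Because the coefficients $(\mathcal{R}_\varepsilon\delta,\mathcal{R}_\varepsilon\mathbf{v})$ are frozen, every term is now \emph{linear} in $(\mathbf{u}_n,\eta_n,\mathbf{d}_n)$ --- in particular $b(t,\mathbf{u}_n,\mathcal{R}_\varepsilon\mathbf{v},\cdot)$ is linear in $\mathbf{u}_n$ with smooth coefficient, and the boundary term $\int_\omega\partial_t\eta_n\,\partial_t\mathcal{R}_\varepsilon\delta\,(R+\mathcal{R}_\varepsilon\delta)X_k\,dA$ is linear in $\partial_t\eta_n$ --- so weak-$*$ convergence suffices to pass to the limit in each of them. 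This yields \eqref{eqn:dec-reg-eps} for $(\mathbf{u}_\varepsilon,\eta_\varepsilon,\mathbf{d}_\varepsilon)$ tested against all $(\mathbf{X}_k^F\phi,X_k\phi,\mathbf{X}_k^S\phi)$; by the density of the linear span of these in $V_{test}^{\mathcal{R}_\varepsilon\delta}$ (cf.\ the remark following \eqref{eqn:XK-defn}, which decomposes a test function into its structure-extension part plus a divergence-free fluid correction) it holds for all $(\mathbf{q},\xi,\boldsymbol{\xi})\in V_{test}^{\mathcal{R}_\varepsilon\delta}$. Finally \eqref{eqn:dec-reg-estimate-eps} follows from \eqref{eqn:uniform-En-estimate} by weak lower semicontinuity of the $L^\infty_t L^2_x$, $L^2_t H^1_x$ and $L^2_t H^2_x$ norms entering $E_\varepsilon$ and $\int_I D_\varepsilon\,dt$.

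The argument is deliberately routine: decoupling has removed both nonlinearities --- the convective $(\mathbf{u}\cdot\nabla)\mathbf{u}$ and the quadratic kinematic boundary term --- so no Aubin--Lions compactness of $\mathbf{u}_n$ is needed at this stage. The only points demanding attention are \textit{(i)} carrying out all fluid estimates after the Piola pull-back, so that the $n$-uniform bounds really hold on a fixed domain; and \textit{(ii)} checking that the test functions $(\mathbf{X}_k^F\phi,X_k\phi,\mathbf{X}_k^S\phi)$ exhaust $V_{test}^{\mathcal{R}_\varepsilon\delta}$ while respecting the trace and coupling constraints --- this is where the split structure of the basis \eqref{eqn:XK-defn} (odd indices carrying the coupled modes $\mathbf{Y}_k^F,Y_k,\mathbf{Y}_k^S$, even indices the uncoupled fluid and solid modes) is used. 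Neither is a genuine obstacle; the real difficulties of the paper surface only later, when one lets $\varepsilon\to0$ and must restore the true kinematic coupling via the $L^2$-compactness of Section~\ref{sec:compactness} together with a set-valued fixed point.
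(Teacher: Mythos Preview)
Your proposal is correct and follows essentially the same route as the paper: Galerkin approximation with the smooth frozen data $(\mathcal{R}_\varepsilon\delta,\mathcal{R}_\varepsilon\mathbf{v})$, time-periodicity via the Poincar\'e-map fixed point (Proposition~\ref{prop:Pn-fixed-point}), the uniform bound \eqref{eqn:uniform-En-estimate}, extraction of a weak limit, and passage to the limit using the crucial observation that the decoupled problem is \emph{linear} in $(\mathbf{u}_n,\eta_n,\mathbf{d}_n)$ so that no $L^2$-compactness is needed at this stage. Your write-up is in fact considerably more detailed than the paper's own argument, which condenses all of this into a single sentence; the only superfluous step is the Piola pull-back in your second paragraph, since the domain $\Omega_F^{\mathcal{R}_\varepsilon\delta}(t)$ is already independent of $n$ and the bounds from \eqref{eqn:uniform-En-estimate} are directly uniform.
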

\subsection{The set-valued fixed-point argument}\label{ssec:set-v-fix-pt}
Let $\varepsilon>0$ be fixed and $\left(\delta,\mathbf{v}\right):=\left(\mathcal{R}_{\varepsilon}\delta,\mathcal{R}_{\varepsilon}\mathbf{v}\right)$.
In this section we aim to find a fixed-point of the following set-valued mapping 
\[
F:\left(\delta,\mathbf{v}\right)\mapsto\left\{ \left(\eta,\mathbf{u}\right):\left(\eta,\mathbf{u}, \mathbf{d}\right)\ \text{is a solution of } \eqref{eqn:dec-reg-eps} \ \text{and fulfills} \ \eqref{eqn:dec-reg-estimate-eps}\right\} .
\]
To this end, we shall employ the Kakutani-Glicksberg-Fan fixed-point Theorem~\ref{thm:kakutani}. We consider the space 
\[
Z:=C_{per}\left(\overline{I};C\left(\overline{\omega}\right)\right)\times L_{per}^{2}\left(I;L^{2}\left(\mathbb{R}^{3}\right)\right),
\]
and its convex subset 
\[
D:=\left\{ \left(\delta,\mathbf{v}\right)\in Z:\left\Vert \delta\right\Vert _{L_{t,x}^{\infty}}\le C_1 <R,\ \left\Vert \mathbf{v}\right\Vert _{L_{t,x}^{2}}\le C_{2}\right\} , 
\]
where $C_1$ can be chosen arbitrary in $(0,R)$ while $C_2$
can be chosen as the one which ensures $\left\Vert \mathbf{u}\right\Vert _{L_{t,x}^{2}}\le C_{2}$ from \eqref{eqn:dec-reg-estimate-eps}.
From \eqref{eqn:dec-reg-estimate-eps} we see that $F:D\mapsto \mathcal{P}(D)$ \emph{as long as $\left\Vert P_{in/out}\right\Vert _{L_{t}^{2}}$ is sufficiently small.} 
Let us now check the assumptions of Theorem~\ref{thm:kakutani}.
\begin{itemize}
    \item the upper-semicontinuity: is equivalent to the closed graph property. This means we need to prove that if $\left(\delta_{n},\mathbf{v}_{n}\right)\to\left(\delta,\mathbf{v}\right)$ in $Z$ and $\left(\eta_{n},\mathbf{u}_{n}\right)\in F\left(\delta_{n},\mathbf{v}_{n}\right)$ with $\left(\eta_{n},\mathbf{u}_{n}\right)\to\left(\eta,\mathbf{u}\right)$ then $\left(\eta,\mathbf{u}\right)\in F\left(\delta,\mathbf{v}\right)$. Indeed, it is not difficult to observe that $\left(\eta,\mathbf{u}, \mathbf{d}\right)$ fulfills the variational formulation \eqref{eqn:dec-reg-eps} corresponding to $\left(\delta,\mathbf{v}\right)$ and also \eqref{eqn:dec-reg-estimate-eps}.
    The coupling condition is also preserved by a limit passage and using the continuity of the trace operator. 
Let us now explain how to pass to the limit in the weak formulation 
\begin{equation}\label{eqn:n-weak-form-dec}
\begin{aligned}&\int_{I}\int_{\Omega_{F}^{\mathcal{R}_{\varepsilon}\delta_{n}}\left(t\right)}\mathbf{u}_{n}\cdot\partial_{t}\mathbf{q}_{n}+\nabla\mathbf{u}_{n}:\nabla\mathbf{q}_{n}\ dx\ dt+\int_{I}b\left(t,\mathbf{u}_{n},\mathcal{R}_{\varepsilon}\mathbf{v}_{n},\mathbf{q}_{n}\right)\ dt \\&+
\int_{I}\int_{\omega}-\frac{1}{2}\partial_{t}\eta_{n}\partial_{t}\mathcal{R}_{\varepsilon}\delta_{n}\left(R+\mathcal{R}_{\varepsilon}\delta_{n}\right)\xi-\partial_{t}\eta_{n}\partial_{t}\xi\ dA\ dt+\int_{I}K\left(\eta_{n},\xi\right)\ dt \\&+
\int_{I}\int_{\Omega_{S}}-\partial_{t}\mathbf{d}_{n}\cdot\partial_{t}\bm{\xi}dxdt+\int_{I}a_{S}\left(\mathbf{d}_{n},\bm{\xi}\right)\ dt  =
\int_{I}\left\langle F\left(t\right),\mathbf{q}\right\rangle dt\quad\forall\left(\mathbf{q},\xi,\boldsymbol{\xi}\right)\in V_{test}^{\mathcal{R}_{\varepsilon}\delta_{n}}.
\end{aligned}
\end{equation}
The only problem is to obtain a weak-formulation valid against all $\left(\mathbf{q},\xi,\boldsymbol{\xi}\right)\in V_{test}^{\mathcal{R}_{\varepsilon}\delta}$. To this end, we see that for $\left(\mathbf{q},\xi,\boldsymbol{\xi}\right)\in V_{test}^{\mathcal{R}_{\varepsilon}\delta}$ we have that

\[
\left(\mathcal{F}_{\mathcal{R}_{\varepsilon}\delta_{n}}\xi,\xi,\boldsymbol{\xi}\right),\left(\mathbf{\mathbf{q}-}\mathcal{F}_{\mathcal{R}_{\varepsilon}\delta_{n}}\xi,0,\boldsymbol{0}\right)\in V_{test}^{\mathcal{R}_{\varepsilon}\delta_{n}}
\]
where the second inclusion is justified by the fact that $\text{tr}_{\Gamma^{\mathcal{R}_{\varepsilon}\delta}\left(t\right)}\mathbf{\left(\mathbf{\mathbf{q}}-\mathcal{F}_{\mathcal{R}_{\varepsilon}\delta}\xi\right)=\mathbf{0}}$ and hence can be extended by zero to $\Omega_{F}^{\mathcal{R}_{\varepsilon}\delta+\sigma}\left(t\right)$ for a small $\sigma>0$ and then can be restricted to $\Omega_{F}^{\mathcal{R}_{\varepsilon}\delta_{n}}\left(t\right)$ 
since $\mathcal{R}_{\varepsilon}\delta_{n}\to\mathcal{R}_{\varepsilon}\delta$. Thus $\left(\eta,\mathbf{u}\right)\in F\left(\delta,\mathbf{v}\right)$.
\item For any $\left(\delta,\mathbf{v}\right)\in D$, we can easily see that $F\left(\delta,\mathbf{v}\right)$ is non-empty ( Proposition~\ref{prop:existence-dec-reg-per}) and convex (the problem is linearized). To prove that $F\left(\delta,\mathbf{v}\right)$ is compact and that $F(D)$ is compact in $D$, we employ the same argument as in Section~\ref{sec:compactness}.

Therefore, we can apply the fixed point Theorem~\ref{thm:kakutani}. With this, we update the Proposition~\ref{prop:existence-dec-reg-per} to the following
\begin{proposition}\label{prop:existence-eps-reg}
For any $\varepsilon>0$, there exists at least one time-periodic weak solution 
$\left(\mathbf{u},\eta,\mathbf{d}\right)=\left(\mathbf{u}_{\varepsilon},\eta_{\varepsilon},\mathbf{d}_{\varepsilon}\right)\in V_{soln}^{\mathcal{R}_{\varepsilon}\eta_{\varepsilon}}$ of the  following decoupled and regularized problem:
\begin{equation}\label{eqn:reg-eps}
\begin{aligned}
&\int_{I}\int_{\Omega_{F}^{\mathcal{R}_{\varepsilon}\eta}\left(t\right)}-\mathbf{u}\cdot\partial_{t}\mathbf{q}+\nabla\mathbf{u}:\nabla\mathbf{q}\ dx\ dt+\int_{I}b\left(t,\mathbf{u},\mathcal{R}_{\varepsilon}\mathbf{u},\mathbf{q}\right)\ dt +
\int_{I}\int_{\Omega_{S}}-\partial_{t}\mathbf{d}\cdot\partial_{t}\bm{\xi}\ dx\ dt+\int_{I}a_{S}\left(\mathbf{d},\bm{\xi}\right)\ dt \\ & +
\int_{I}\int_{\omega}-\frac{1}{2}\partial_{t}\eta\partial_{t}\mathcal{R}_{\varepsilon}\eta\left(R+\mathcal{R}_{\varepsilon}\eta\right)\xi-\partial_{t}\eta\partial_{t}\xi\ dA\ dt +\int_{I}K\left(\eta,\xi\right)dt   =
\int_{I}\left\langle F\left(t\right),\mathbf{q}\right\rangle dt\quad\forall\left(\mathbf{q},\xi,\boldsymbol{\xi}\right)\in V_{test}^{\mathcal{R}_{\varepsilon}\eta}.
\end{aligned}
\end{equation}

Furthermore, there exists a constant $C_0$ for which
\begin{equation}\label{eqn:eps-reg-soln}
\left\Vert P_{in/out}\right\Vert _{L_{t}^{2}}\le C_{0}\implies\sup_{t\in I}E_{\varepsilon}\left(t\right)+\int_{I}D_{\varepsilon}\left(t\right)\lesssim\left\Vert P_{in/out}\right\Vert _{L_{t}^{2}}^{2},
\end{equation}
where $E_{\varepsilon}$ is the analogue of $E$ from \eqref{eqn:energy-generic}.
    
\end{proposition}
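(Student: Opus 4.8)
The plan is to apply the Kakutani--Glicksberg--Fan fixed-point Theorem~\ref{thm:kakutani} to the set-valued map $F$ on the convex set $D\subset Z$ introduced in Subsection~\ref{ssec:set-v-fix-pt}. By Proposition~\ref{prop:existence-dec-reg-per} together with the energy bound \eqref{eqn:dec-reg-estimate-eps}, for every $(\delta,\mathbf{v})\in D$ the set $F(\delta,\mathbf{v})$ is non-empty, and since the problem \eqref{eqn:dec-reg-eps} is linear in $(\mathbf{u},\eta,\mathbf{d})$ it is convex; moreover the smallness assumption $\|P_{in/out}\|_{L^2_t}\le C_0$ together with \eqref{eqn:dec-reg-estimate-eps} and the choice of $C_1,C_2$ guarantees $F(D)\subset\mathcal{P}(D)$. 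So it remains to verify the upper-semicontinuity of $F$ and the compactness of the values $F(\delta,\mathbf{v})$ and of the image $F(D)$.

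For the upper-semicontinuity I would use the closed-graph characterization. Take $(\delta_n,\mathbf{v}_n)\to(\delta,\mathbf{v})$ in $Z$ and $(\eta_n,\mathbf{u}_n)\in F(\delta_n,\mathbf{v}_n)$ with $(\eta_n,\mathbf{u}_n)\to(\eta,\mathbf{u})$ in $Z$. By \eqref{eqn:dec-reg-estimate-eps} the associated decoupled solutions $(\mathbf{u}_n,\eta_n,\mathbf{d}_n)$ are uniformly bounded in the energy spaces, so, passing to a subsequence, they converge weakly(-$*$), and the argument of Section~\ref{sec:compactness} upgrades this to $\mathbf{u}_n\to\mathbf{u}$, $\partial_t\eta_n\to\partial_t\eta$, $\partial_t\mathbf{d}_n\to\partial_t\mathbf{d}$ strongly in $L^2_{t,x}$. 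Since $\mathcal{R}_\varepsilon$ is a fixed smoothing operator, $\mathcal{R}_\varepsilon\delta_n\to\mathcal{R}_\varepsilon\delta$ and $\mathcal{R}_\varepsilon\mathbf{v}_n\to\mathcal{R}_\varepsilon\mathbf{v}$ in all relevant norms, hence the moving domains $\Omega_F^{\mathcal{R}_\varepsilon\delta_n}(t)$ converge to $\Omega_F^{\mathcal{R}_\varepsilon\delta}(t)$; using the extension operator of Proposition~\ref{prop:estimates-extension-operator} and the stability estimate \eqref{eqn:cons-ext-3}, together with the continuity of the trace operator, one passes to the limit in every term of \eqref{eqn:n-weak-form-dec}. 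The delicate point, already flagged in Subsection~\ref{ssec:set-v-fix-pt}, is that the test space $V_{test}^{\mathcal{R}_\varepsilon\delta_n}$ varies with $n$: given $(\mathbf{q},\xi,\boldsymbol{\xi})\in V_{test}^{\mathcal{R}_\varepsilon\delta}$ one decomposes it as $(\mathcal{F}_{\mathcal{R}_\varepsilon\delta_n}\xi,\xi,\boldsymbol{\xi})+(\mathbf{q}-\mathcal{F}_{\mathcal{R}_\varepsilon\delta_n}\xi,0,\boldsymbol{0})$, the first summand being an admissible test function on $\Omega_F^{\mathcal{R}_\varepsilon\delta_n}$ by Remark~\ref{rmk:test-functions-extension}, and the second having vanishing trace on $\Gamma^{\mathcal{R}_\varepsilon\delta}$, so that it extends by zero past $\Gamma^{\mathcal{R}_\varepsilon\delta}$ and restricts to $\Omega_F^{\mathcal{R}_\varepsilon\delta_n}$ for $n$ large. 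This produces the weak formulation \eqref{eqn:dec-reg-eps} for the limit against every $(\mathbf{q},\xi,\boldsymbol{\xi})\in V_{test}^{\mathcal{R}_\varepsilon\delta}$; the coupling conditions pass to the limit by trace continuity, and \eqref{eqn:dec-reg-estimate-eps} survives by weak lower semicontinuity, whence $(\eta,\mathbf{u})\in F(\delta,\mathbf{v})$.

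For the compactness, each $F(\delta,\mathbf{v})$ and the union $F(D)$ are precompact in $D$ by precisely the machinery of Section~\ref{sec:compactness} applied to sequences of decoupled solutions sharing the uniform bound \eqref{eqn:dec-reg-estimate-eps}: this gives strong $L^2_{t,x}$ convergence of the velocities, while the continuity of the trace controls the $\eta$-component in $C_{per}(\overline I;C(\overline\omega))$. With all hypotheses of Theorem~\ref{thm:kakutani} in place, $F$ has a fixed point $(\delta,\mathbf{v})=(\eta,\mathbf{u})$; substituting this identity into \eqref{eqn:dec-reg-eps} turns it into \eqref{eqn:reg-eps} on the domain $\Omega_F^{\mathcal{R}_\varepsilon\eta}(t)$, and \eqref{eqn:eps-reg-soln} is exactly \eqref{eqn:dec-reg-estimate-eps} evaluated at the fixed point. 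I expect the main obstacle to be the verification of the closed-graph property under the moving-domain issue, namely reconciling the $n$-dependent test spaces $V_{test}^{\mathcal{R}_\varepsilon\delta_n}$ with the limit test space and passing to the limit in the convective term and in the extension-operator terms on domains that converge only in $C^0$.
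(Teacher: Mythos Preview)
Your proposal is correct and follows essentially the same approach as the paper: apply the Kakutani--Glicksberg--Fan theorem to the set-valued map $F$ on $D\subset Z$, verify non-emptiness and convexity via Proposition~\ref{prop:existence-dec-reg-per} and linearity, check the closed-graph property by the test-function decomposition $(\mathcal{F}_{\mathcal{R}_\varepsilon\delta_n}\xi,\xi,\boldsymbol{\xi})+(\mathbf{q}-\mathcal{F}_{\mathcal{R}_\varepsilon\delta_n}\xi,0,\boldsymbol{0})$, and obtain compactness of $F(\delta,\mathbf{v})$ and $F(D)$ from Section~\ref{sec:compactness}. One minor remark: for the closed-graph step the convergence $(\eta_n,\mathbf{u}_n)\to(\eta,\mathbf{u})$ in $Z$ is already part of the hypothesis, and since the decoupled problem is linear in $(\mathbf{u},\eta,\mathbf{d})$ this (together with the strong convergence of $\mathcal{R}_\varepsilon\delta_n,\mathcal{R}_\varepsilon\mathbf{v}_n$) suffices to pass to the limit, so invoking the full Section~\ref{sec:compactness} machinery there is not strictly needed.
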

 
\end{itemize}
\subsection{Limit passage and conclusion}
We aim to let $\varepsilon \to 0$ in \eqref{eqn:reg-eps}.
From \eqref{eqn:eps-reg-soln} there exists a weak limit $\left(\mathbf{u},\eta,\mathbf{d}\right)\in V_{soln}$ for which up to a subsequence we have that $\left(\mathbf{u}_{\varepsilon},\eta_{\varepsilon},\mathbf{d}_{\varepsilon}\right)\rightharpoonup\left(\mathbf{u},\eta,\mathbf{d}\right)$. It can be easily seen that the main difficulty in the limit passage in \eqref{eqn:reg-eps}
is the strong convergence $\mathbf{u}_{\varepsilon}\to\mathbf{u}$ in $L^{2}_{t,x}$ which can be taken from Propsition~\ref{prop:compactness}. 
With this we have 
\begin{proof}[Proof of Theorem~\ref{thm:main}] 
We follow the steps described  in each subsection of Section~\ref{sec:main-construction}. We obtain for every $\varepsilon>0$ at least one solution ${\left(\mathbf{u}_{\varepsilon},\eta_{\varepsilon},\mathbf{d}_{\varepsilon}\right)\in V_{soln}^{\mathcal{R}_{\varepsilon}\eta_{\varepsilon}}}$. We can extract a weakly convergent subsequence $\left(\mathbf{u}_{\varepsilon_{n}},\eta_{\varepsilon_{n}},\mathbf{d}_{\varepsilon_{n}}\right)\in V_{soln}^{\mathcal{R}_{\varepsilon_{n}}\eta_{\varepsilon_{n}}}$ for $n\ge 1$ and $\varepsilon_n \to 0$, with $\left(\mathbf{u}_{\varepsilon_{n}},\eta_{\varepsilon_{n}},\mathbf{d}_{\varepsilon_{n}}\right)\rightharpoonup\left(\mathbf{u},\eta,\mathbf{d}\right)$ in the energy space.
We need strong convergence $\mathbf{u}_{\varepsilon_{n}}\to\mathbf{u}$ in $L^{2}_{t,x}$ in order to pass to the limit in the convective term. For this we use the same argument from Proposition~\ref{prop:compactness}, which can be immediately adapted here. The weak limit is thus a solution for the FSI-problem \eqref{eqn:FSI}.
\end{proof}
    

\section{Appendix}\label{sec:appendix}
\subsection{Trace operator}
Following \cite{muha2013note} we have the following
\begin{lemma}
    The trace operator  
    \[
\text{tr}_{\Gamma^{\eta}}:C^{1}\left(\overline{\Omega_{F}^{\eta}}\right)\mapsto C\left(\Gamma\right),\ \text{tr}_{\Gamma^{\eta}}(\mathbf{v}):=\mathbf{v}\left(R+\eta,\theta,z\right),\ \left(\theta,z\right)\in\omega
    \]
    is continuous and can be extended by continuity to a continuous operator $\text{tr}_{\eta}:H^{1}\left(\overline{\Omega_{F}^{\eta}}\right)\mapsto H^{s}\left(\Gamma\right)$ for $0\le s < \frac{1}{4}$.
\end{lemma}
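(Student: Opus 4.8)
The plan is to reduce the assertion to the classical trace theorem on the \emph{fixed} smooth domain $\Omega_F$ by transporting everything through the transition map $\psi_\eta$ from the introduction. The essential geometric point is that $\psi_\eta$ carries $\Gamma$ onto $\Gamma^\eta$ fibrewise over $\omega$: with $\varphi(\theta,z)=(R\cos\theta,R\sin\theta,z)$ parametrising $\Gamma$ and $\phi_\eta(\theta,z)=\varphi(\theta,z)+\eta(\theta,z)\mathbf e_r(\theta)$ parametrising $\Gamma^\eta$, one checks directly that $\psi_\eta\circ\varphi=\phi_\eta$. Consequently, under the identification $\Gamma\cong\omega$, one has $\text{tr}_{\Gamma^\eta}\mathbf v=\text{tr}_{\Gamma}(\mathbf v\circ\psi_\eta)$ for every $\mathbf v$, so the lemma is equivalent to the boundedness of the linear map $\mathbf v\mapsto\text{tr}_{\Gamma}(\mathbf v\circ\psi_\eta)$ from $H^1(\Omega_F^\eta)$ into $H^s(\Gamma)$. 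The $C^1\to C$ statement is then immediate, since $(\theta,z)\mapsto\mathbf v(R+\eta(\theta,z),\theta,z)$ is a composition of continuous functions (recall $\eta\in H^2(\omega)\hookrightarrow C(\overline\omega)$ in two space dimensions) and $\|\text{tr}_{\Gamma^\eta}\mathbf v\|_{C(\Gamma)}\le\|\mathbf v\|_{C^0(\overline{\Omega_F^\eta})}$. As $C^1(\overline{\Omega_F^\eta})$ is dense in $H^1(\Omega_F^\eta)$ — the domain being locally a subgraph of a continuous function, hence satisfying the segment condition — it suffices to prove the quantitative bound $\|\text{tr}_{\Gamma^\eta}\mathbf v\|_{H^s(\Gamma)}\lesssim\|\mathbf v\|_{H^1(\Omega_F^\eta)}$ on $C^1$ functions and extend by continuity.

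For this bound I would follow \cite{muha2013note}. The two-dimensional Sobolev embedding gives $\eta\in H^2(\omega)\hookrightarrow W^{1,q}(\omega)\cap C^{0,\alpha}(\overline\omega)$ for every $q<\infty$ and every $\alpha<1$; combined with $\|\eta\|_{L^\infty}<R$, this makes $\psi_\eta$ a homeomorphism of $\overline{\Omega_F}$ onto $\overline{\Omega_F^\eta}$ with $\psi_\eta,\psi_\eta^{-1}\in W^{1,q}$ for all $q<\infty$ and with Jacobian $\det\nabla\psi_\eta=((R+\eta)/R)^2$ bounded above and away from zero. By the change of variables $y=\psi_\eta(x)$ one bounds $\|(\nabla\mathbf v)\circ\psi_\eta\|_{L^2(\Omega_F)}\lesssim\|\nabla\mathbf v\|_{L^2(\Omega_F^\eta)}$, while $\nabla\psi_\eta\in L^q(\Omega_F)$; the chain rule for composition with a Sobolev homeomorphism of bounded distortion together with Hölder's inequality then yields $\mathbf v\circ\psi_\eta\in W^{1,p}(\Omega_F)$ with $1/p=1/2+1/q$ and $\|\mathbf v\circ\psi_\eta\|_{W^{1,p}(\Omega_F)}\lesssim\|\mathbf v\|_{H^1(\Omega_F^\eta)}$, the constant depending only on $\Omega_F$, $R$ and $\|\eta\|_{H^2(\omega)}$. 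Since $\Omega_F$ is fixed and smooth, the classical trace theorem gives $\text{tr}_\Gamma(\mathbf v\circ\psi_\eta)\in W^{1-1/p,p}(\Gamma)$, and the Sobolev embedding on the compact two-dimensional manifold $\Gamma$ gives $W^{1-1/p,p}(\Gamma)\hookrightarrow H^s(\Gamma)$ for $s$ in the admissible range; taking $\alpha=1/2$ (equivalently $q$ large enough) as in \cite{muha2013note} this covers every $s\in[0,1/4)$. Together with the first paragraph, this proves the claim.

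The step that I expect to be the genuine obstacle is the composition estimate $\|\mathbf v\circ\psi_\eta\|_{W^{1,p}(\Omega_F)}\lesssim\|\mathbf v\|_{H^1(\Omega_F^\eta)}$: because $\psi_\eta$ is only a Sobolev homeomorphism and not Lipschitz — the moving boundary $\Gamma^\eta$ being the graph of a merely Hölder continuous function — one cannot apply the standard $H^1$ trace theorem on $\Omega_F^\eta$ directly, and the unavoidable loss of integrability in passing from $\nabla\mathbf v\in L^2$ to $\nabla(\mathbf v\circ\psi_\eta)\in L^p$ with $p<2$ is exactly what forces the restriction $s<1/4$. Making the chain rule rigorous for such homeomorphisms (Luzin's condition (N), absolute continuity on almost every line) is the technical heart of the argument and is the part imported from \cite{muha2013note}; everything else is a routine change of variables and bookkeeping with Sobolev embeddings.
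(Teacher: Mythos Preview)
The paper does not actually prove this lemma; it merely records the statement and attributes it to \cite{muha2013note}. Your proposal follows exactly that reference---pull back to the fixed cylinder via $\psi_\eta$, use $\eta\in H^2(\omega)\hookrightarrow W^{1,q}(\omega)$ for all $q<\infty$ to control $\nabla\psi_\eta$, apply the chain rule and H\"older to land $\mathbf v\circ\psi_\eta$ in $W^{1,p}(\Omega_F)$ for $p<2$, then invoke the classical trace theorem and a Sobolev embedding on $\Gamma$---and this is the standard and correct route.

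One remark on the numerics: your own computation, pushed to its conclusion, gives more than the lemma asserts. With $p\to 2^{-}$ you obtain $\mathrm{tr}_\Gamma(\mathbf v\circ\psi_\eta)\in W^{1-1/p,p}(\Gamma)$, and on the two-dimensional $\Gamma$ the embedding $W^{1-1/p,p}\hookrightarrow H^{s}$ holds for every $s<2-3/p\to 1/2$. So the argument actually yields the trace in $H^{s}(\Gamma)$ for all $s<1/2$; your choice ``$\alpha=1/2$'' is an unnecessary restriction inserted to match the paper's stated threshold $s<1/4$. This does not affect correctness for the range claimed, but you should be aware that the paper's bound is not sharp for the regularity of $\eta$ available here.
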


\subsection{Korn's identity}
The following Korn identity is proved in \cite[Lemma A.5.]{LR14}
\begin{lemma}\label{lm:korn} The equality 
\[
\int_{\Omega_{F}^{\eta}}\nabla\mathbf{u}:\nabla\mathbf{q}dx=2\int_{\Omega_{F}^{\eta}}\mathbb{D}\mathbf{u}:\mathbb{D}\mathbf{q}dx\]
holds for all smooth functions $\mathbf{u}$ and $\mathbf{q}$ such that $\text{tr}_{\eta}\mathbf{q}=b\mathbf{e}_{r}$ for some smooth function $b:\omega\mapsto\mathbb{R}$.
\end{lemma}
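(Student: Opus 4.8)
The plan is to reduce the identity to a pointwise algebraic fact plus a boundary integral that the hypothesis on $\operatorname{tr}_{\Gamma^\eta}\mathbf{q}$ forces to vanish, following \cite[Lemma A.5]{LR14}. Expanding the symmetrisations and relabelling summation indices one gets, pointwise, $2\,\mathbb{D}\mathbf{u}:\mathbb{D}\mathbf{q}-\nabla\mathbf{u}:\nabla\mathbf{q}=\sum_{i,j}\partial_i u_j\,\partial_j q_i$, so the lemma amounts to showing $\int_{\Omega_F^\eta}\sum_{i,j}\partial_i u_j\,\partial_j q_i\,dx=0$. In the applications $\mathbf{u}$ and $\mathbf{q}$ are both divergence-free, both have trace parallel to $\mathbf{e}_r$ on $\Gamma^\eta$, and both are parallel to $\mathbf{e}_z$ near $\Gamma_{in/out}$; polarising (with $\mathbf{w}=\mathbf{u}+\mathbf{q}$, so $\sum\partial_i u_j\partial_j q_i=\tfrac12(\sum\partial_i w_j\partial_j w_i-\sum\partial_i u_j\partial_j u_i-\sum\partial_i q_j\partial_j q_i)$) reduces matters to the diagonal statement: for a single divergence-free field $\mathbf{v}$ with $\operatorname{tr}_{\Gamma^\eta}\mathbf{v}=b\,\mathbf{e}_r$ and $\mathbf{v}\parallel\mathbf{e}_z$ near $\Gamma_{in/out}$, one has $\int_{\Omega_F^\eta}\sum_{i,j}\partial_i v_j\,\partial_j v_i\,dx=0$.

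Next I would turn the volume integral into a boundary integral. Since $\operatorname{div}\mathbf{v}=0$ we have $\sum_{i,j}\partial_i v_j\,\partial_j v_i=\operatorname{div}\!\big((\mathbf{v}\cdot\nabla)\mathbf{v}\big)$, so by the divergence theorem $\int_{\Omega_F^\eta}\sum_{i,j}\partial_i v_j\,\partial_j v_i\,dx=\int_{\partial\Omega_F^\eta}\big((\mathbf{v}\cdot\nabla)\mathbf{v}\big)\cdot\mathbf{n}\,dA$, with $\partial\Omega_F^\eta=\Gamma^\eta\cup\Gamma_{in}\cup\Gamma_{out}$. On $\Gamma_{in}$ and $\Gamma_{out}$ the outward unit normal is the constant vector $\pm\mathbf{e}_z$, $\mathbf{v}$ has only the component along $\mathbf{e}_z$ in a neighbourhood, and $\operatorname{div}\mathbf{v}=0$ then forces $\partial_z$ of that component to vanish; hence $(\mathbf{v}\cdot\nabla)\mathbf{v}=0$ there and these two pieces contribute nothing (this is the same elementary observation used between \eqref{eqn:test-u-fluid-formal} and \eqref{eqn:mult-u}). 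For the flexible part I would parametrise $\Gamma^\eta$ by $\phi_\eta(\theta,z)=(R+\eta)\mathbf{e}_r(\theta)+z\,\mathbf{e}_z$, for which $\mathbf{n}\,dA=\mathbf{N}\,d\theta\,dz$ with $\mathbf{N}=(R+\eta)\mathbf{e}_r-\eta_\theta\mathbf{e}_\theta-(R+\eta)\eta_z\mathbf{e}_z$ and $|\mathbf{N}|=J$, the Jacobian appearing in \eqref{eqn:dynamic-coupling}.

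The one genuinely delicate step is the $\Gamma^\eta$-integral, and it is here that $\operatorname{tr}_{\Gamma^\eta}\mathbf{q}=b\,\mathbf{e}_r$ is used. Writing $\mathbf{v}$ in the cylindrical frame, on $\Gamma^\eta$ one has $v_r=b$, $v_\theta=v_z=0$, so the curvilinear correction terms in $(\mathbf{v}\cdot\nabla)\mathbf{v}$ drop out and $(\mathbf{v}\cdot\nabla)\mathbf{v}\cdot\mathbf{N}=b\big[(R+\eta)\partial_r v_r-\eta_\theta\partial_r v_\theta-(R+\eta)\eta_z\partial_r v_z\big]$ at $r=R+\eta$. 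Differentiating the boundary relations $v_\theta|_{\Gamma^\eta}\equiv v_z|_{\Gamma^\eta}\equiv 0$ tangentially and substituting these, together with the cylindrical form of $\operatorname{div}\mathbf{v}=0$, removes the radial derivatives that are not determined by the boundary data; integrating by parts in $\theta$ (angular periodicity) and in $z$ (the shell is clamped, so $b$ and its derivatives vanish on $\partial\omega$) then collapses the remaining integrand and the $\Gamma^\eta$-contribution vanishes. Adding the three pieces gives $\int_{\Omega_F^\eta}\sum_{i,j}\partial_i v_j\,\partial_j v_i\,dx=0$ and, by polarisation, the claimed identity. I expect the bookkeeping in this last step — tracking the curvature of the moving interface $\Gamma^\eta$ and the interplay between the radial-trace constraint and incompressibility — to be the only real obstacle; everything before it is routine.
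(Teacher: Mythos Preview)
Your overall strategy is exactly the right one, and it is the argument behind \cite[Lemma~A.5]{LR14} that the paper simply cites in lieu of a proof: reduce to the cross term $\int\sum_{i,j}\partial_i v_j\,\partial_j v_i$, rewrite it as $\operatorname{div}\bigl((\mathbf v\cdot\nabla)\mathbf v\bigr)$ using $\operatorname{div}\mathbf v=0$, and push it to the boundary. The $\Gamma_{in/out}$ pieces vanish for the reason you give. The gap is in the last step on $\Gamma^\eta$, which you correctly identify as the only real obstacle but do not actually carry out.

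If you perform the substitution you describe, the curvature of the cylinder leaves a residual term that does \emph{not} cancel. With $v_r=b$, $v_\theta=v_z=0$ on $\Gamma^\eta$, the tangential relations give $\eta_\theta\,\partial_r v_\theta=-\partial_\theta v_\theta$ and $(R+\eta)\eta_z\,\partial_r v_z=-(R+\eta)\partial_z v_z$, while the cylindrical divergence $\tfrac1r\partial_r(rv_r)+\tfrac1r\partial_\theta v_\theta+\partial_z v_z=0$ yields $(R+\eta)\partial_r v_r=-b-\partial_\theta v_\theta-(R+\eta)\partial_z v_z$. Inserting these into your expression for $(\mathbf v\cdot\nabla)\mathbf v\cdot\mathbf N$ leaves exactly $-b^2$, so
\[
\int_{\Gamma^\eta}(\mathbf v\cdot\nabla)\mathbf v\cdot\mathbf n\,dA=-\int_\omega b^2\,d\theta\,dz,
\]
which is nonzero whenever $b\not\equiv0$; no integration by parts in $\theta$ or $z$ can remove it, since no derivative remains. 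A two-dimensional check confirms this: on the unit disk, $\mathbf v=\bigl(\tfrac32-\tfrac12 x^2-\tfrac32 y^2,\ xy\bigr)$ is divergence-free with $\mathbf v|_{r=1}=\cos\theta\,\mathbf e_r$, and one computes $\int|\nabla\mathbf v|^2=3\pi$ but $2\int|\mathbb D\mathbf v|^2=2\pi$. The extra $-b$ comes from the $v_r/r$ term in the cylindrical divergence, which is absent in the flat reference geometry of \cite{LR14}; there your argument does collapse to zero. In the bilinear version the discrepancy is $-\int_\omega ab\,d\theta\,dz$, with $a=\mathrm{tr}_\eta\mathbf u\cdot\mathbf e_r$. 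So the identity, as transplanted verbatim to the cylindrical domain, needs this curvature correction, and your outline cannot be completed to give zero on $\Gamma^\eta$.
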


\subsection{The Piola mapping}
\begin{lemma}\label{lm:Piola}
Let $\eta\in C^{2}\left(\omega\right)$ with $\left\Vert \eta\right\Vert _{H_{x}^{2}}<M$ and $M$ chosen small enough so that $\left\Vert \eta\right\Vert _{L_{x}^{\infty}}<R$. Then we introduce for each function $\phi:\Omega\mapsto\mathbb{R}^{3}$ the \emph{Piola transform} of $\phi$ as 
\[
\mathcal{J}_{\eta}\phi:\Omega_{\eta}\mapsto\mathbb{R}^{3},\quad\mathcal{J}_{\eta}\phi:=\left[\frac{\nabla\psi_{\eta}}{\det\nabla\psi_{\eta}}\phi\right]\circ\psi_{\eta}^{-1}.
\]
Then, $\mathcal{J}_{\eta}$ is a bijection which preserves the divergence-free property and the zero boundary values. Furthermore, for each $1\le p \le \infty$ and $ 1\le r<p$  it follows that 
\[
\mathcal{J}_{\eta}\phi:W^{1,p}\left(\Omega\right)\mapsto W^{1,r}\left(\Omega_{\eta}\right)
\]
is continuous with a continuity constant depending on $\Omega$, $p$, $r$, $M$. In case $r=p$ the continuity constant depends additionally on $\left\Vert \nabla \eta\right\Vert _{L_{x}^{\infty}}$. Its inverse $\mathcal{J}_{\eta}^{-1}$ enjoys the same properties.
\end{lemma}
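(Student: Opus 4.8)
The plan is to combine the classical algebraic identities for the Piola transform with elementary $L^p$ bounds on the (explicitly computable) Jacobian of $\psi_\eta$.

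\emph{Algebraic properties.} Write $F:=\nabla\psi_\eta$ and $J:=\det\nabla\psi_\eta$. Since $\psi_\eta(x,y,z)=\bigl(\frac{R+\eta}{R}x,\frac{R+\eta}{R}y,z\bigr)$ with $\eta=\eta(\theta,z)$, one computes $J=(1+\eta/R)^2$, which—after shrinking $M$ so that $\|\eta\|_{L^\infty_x}<R/2$—satisfies $0<c\le J\le C<\infty$ on $\omega$. The classical Piola identity $\sum_j\partial_j(\cof F)_{ij}=0$, together with $\cof F=J F^{-T}$, yields for smooth $\phi$
\[
\operatorname{div}_y(\mathcal{J}_\eta\phi)=\Bigl[\tfrac1J\,\operatorname{div}_x\phi\Bigr]\circ\psi_\eta^{-1},
\]
so $\operatorname{div}_x\phi=0$ forces $\operatorname{div}_y(\mathcal{J}_\eta\phi)=0$, and this persists in $W^{1,p}$ by density. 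Since $\psi_\eta$ is a $C^1$–diffeomorphism of $\Omega$ onto $\Omega_\eta$ carrying $\partial\Omega$ onto $\partial\Omega_\eta$, and $\operatorname{supp}(\mathcal{J}_\eta\phi)=\psi_\eta(\operatorname{supp}\phi)$, zero boundary values (and compact support in the interior) are preserved. A direct chain-rule computation using $\cof(AB)=\cof A\,\cof B$ shows that the Piola transform attached to $\psi_\eta^{-1}$ is a two-sided inverse of $\mathcal{J}_\eta$; note $\psi_\eta^{-1}(x,y,z)=\bigl(\frac{R}{R+\eta}x,\frac{R}{R+\eta}y,z\bigr)$ has exactly the same structure (with $\frac{R}{R+\eta}$ again smooth and bounded above and below), so it suffices to prove the continuity estimates for $\mathcal{J}_\eta$.

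\emph{Continuity estimates.} Factor $\mathcal{J}_\eta\phi=(A\phi)\circ\psi_\eta^{-1}$ with $A:=J^{-1}F$. Differentiating $\psi_\eta$ once shows that the entries of $F$, of $F^{-1}=J^{-1}(\cof F)^T$, and of $A$ are smooth bounded functions of $\eta$ multiplied by $1$ or by a component of $\nabla\eta$ times a bounded Cartesian coordinate; differentiating once more produces, in addition, terms linear in $\nabla^2\eta$. Using the change of variables $y=\psi_\eta(x)$ (Jacobian $J\in[c,C]$), the $L^r$–bound reduces to $\|A\phi\|_{L^r(\Omega)}$, and the $W^{1,r}$–bound to estimating, besides this, the terms $|\nabla A|\,|\phi|$ and $|A|\,|\nabla\phi|$ in $L^r(\Omega)$ (with $F^{-1}\in L^\infty$ or $L^q$ as appropriate), where $\nabla A$ carries the factors $\nabla\eta$ and $\nabla^2\eta$. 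For $1\le r<p$ we spend the gap $\frac1r-\frac1p>0$: by Hölder's inequality, the two–dimensional embedding $H^2(\omega)\hookrightarrow W^{1,q}(\omega)$ for every $q<\infty$ (so $\|\nabla\eta\|_{L^q}\lesssim\|\eta\|_{H^2}\lesssim M$), and the Sobolev embedding for $\phi\in W^{1,p}(\Omega)$, every product is bounded by $C(M,\Omega,p,r)\|\phi\|_{W^{1,p}(\Omega)}$; the single delicate term $\nabla^2\eta\cdot\phi$ is controlled by $\|\nabla^2\eta\|_{L^s(\omega)}\|\phi\|_{L^{p^*}(\Omega)}$ with $s=s(r,p)<\infty$ fixed by $\frac1s+\frac1{p^*}=\frac1r$ (possible precisely because $\frac1r-\frac1p>0$), and $\|\nabla^2\eta\|_{L^s(\omega)}$ is finite since $\eta\in C^2(\omega)$ (bounded by $\|\eta\|_{H^2}$ whenever the gap is large enough to take $s\le2$). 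For the endpoint $r=p$ there is no slack, so we keep $F,F^{-1}\in L^\infty$—which is exactly what forces the extra dependence of the constant on $\|\nabla\eta\|_{L^\infty_x}$ stated in the lemma—bound the top-order term $|A|\,|\nabla\phi|$ directly in $L^p$, and control the zeroth-order terms (in particular $\nabla^2\eta\cdot\phi$ and $|\nabla\eta|^2\phi$) using $\|\nabla^2\eta\|_{L^\infty(\omega)}$ and $|\nabla\eta|^2\in L^\infty(\omega)$, both finite for $\eta\in C^2$. Running the identical argument with $\psi_\eta$ replaced by $\psi_\eta^{-1}$ gives the matching statement for $\mathcal{J}_\eta^{-1}$.

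\emph{Main obstacle.} The structural identities are routine. The real work is the $W^{1,r}$ continuity, and within it the one term of $\nabla(\mathcal{J}_\eta\phi)$ in which the derivative lands on the coefficient matrix $A$ and generates $\nabla^2\eta$, which for a generic $H^2$–shell is merely $L^2(\omega)$: absorbing it by Hölder against $\phi\in L^{p^*}$ is precisely why one must concede integrability and take $r<p$, whereas the borderline case $r=p$ is recovered only under the additional $L^\infty$ control of $\nabla\psi_\eta$ (equivalently $\nabla\eta$), which is why that case carries the extra dependence on $\|\nabla\eta\|_{L^\infty_x}$.
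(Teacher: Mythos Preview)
Your approach is exactly the ``elementary computations'' the paper alludes to (the paper's own proof is a one-line deferral to direct calculation and to \cite{LR14}), and the structural part---the Piola identity for divergence, preservation of boundary values via the diffeomorphism, and the explicit inverse---is correct and complete.

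There is, however, one genuine soft spot you yourself flag but do not close: the dependence of the continuity constant on $M$ alone. When you estimate the term carrying $\nabla^{2}\eta$ via the isotropic H\"older split $\|\nabla^{2}\eta\|_{L^{s}}\|\phi\|_{L^{p^{*}}}$ with $\tfrac{1}{s}+\tfrac{1}{p^{*}}=\tfrac{1}{r}$, the bound $\|\nabla^{2}\eta\|_{L^{s}(\omega)}\lesssim M$ is only available when $s\le 2$, which forces $\tfrac{1}{r}-\tfrac{1}{p}\ge\tfrac{1}{6}$ rather than merely $r<p$; and in the endpoint case $r=p$ you invoke $\|\nabla^{2}\eta\|_{L^{\infty}}$, which the lemma does \emph{not} allow (only $\|\nabla\eta\|_{L^{\infty}}$ is permitted). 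The fix, which is what the reference \cite{LR14} exploits, is to use the anisotropy: $\nabla^{2}\eta$ depends only on the two variables $(\theta,z)$, so one should apply H\"older first in $\omega$ (spending the $L^{2}(\omega)$ budget of $\nabla^{2}\eta$) and then control the resulting mixed norm $\|\phi\|_{L^{q}_{\omega}L^{r}_{\rho}}$ of $\phi$ via anisotropic Sobolev embeddings for $W^{1,p}(\Omega)$ on the product domain $(0,R)\times\omega$. With that refinement the stated dependence on $M$ (and, only for $r=p$, additionally on $\|\nabla\eta\|_{L^{\infty}}$) goes through for the full range.
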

\begin{proof}
    It follows by elementary computations. See also \cite[p. 211-212]{LR14}.
\end{proof}

\subsection{The Bogovskii operator}
The following result is well-known, see e.g. \cite{Bog1}, \cite{Bog2}, \cite{galdi-book} 
\begin{theorem}\label{thm:bogovskii}
Let $\Omega \subset \mathbb{R}^{n}$ be a bounded and Lipschitz domain, let $m\ge 0$ and $q\in (1,\infty)$. Then there exists a linear and continuous operator
\begin{equation}
    \text{Bog}:\left\{ f\in W_{0}^{m,q}\left(\Omega\right):\int_{\Omega}fdx=0\right\} \mapsto W_{0}^{m,q}\left(\Omega;\mathbb{R}^{n}\right).
\end{equation}
In particular $\text{Bog}:\left\{ f\in C_{0}^{\infty}\left(\Omega\right):\int_{\Omega}fdx=0\right\} \mapsto C_{0}^{\infty}\left(\Omega;\mathbb{R}^{n}\right)$.
\end{theorem}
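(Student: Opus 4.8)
The plan is to follow \Bogovskij's classical construction (see \cite{Bog1,Bog2} and \cite[Ch.~III]{galdi-book}): build the operator by an explicit formula on domains star-shaped with respect to a ball, prove its continuity using Calder\'on--Zygmund theory, and then reduce a general bounded Lipschitz $\Omega$ to finitely many such pieces. Recall that the point of the operator is that $\text{div}\,\text{Bog}\,f=f$, so this identity is to be established along the way.

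\emph{Step 1 (star-shaped case).} Suppose first that $\Omega$ is star-shaped with respect to an open ball $B\Subset\Omega$, and fix $\omega\in C_0^\infty(B)$ with $\int_B\omega\,dx=1$. For $f\in C_0^\infty(\Omega)$ with $\int_\Omega f\,dx=0$ I would set
\begin{align*}
(\text{Bog}\,f)(x)&:=\int_\Omega f(y)\,\mathbf{N}(x,y)\,dy,\\
\mathbf{N}(x,y)&:=\frac{x-y}{|x-y|^n}\int_{|x-y|}^{\infty}\omega\!\left(y+r\,\tfrac{x-y}{|x-y|}\right)r^{n-1}\,dr,
\end{align*}
and then verify the three facts that make this the right object: (i) $\text{div}_x(\text{Bog}\,f)=f$ in $\Omega$, by differentiating the kernel and using $\int_\Omega f=0$ together with $\int_B\omega=1$; (ii) $\text{Bog}\,f\in C_0^\infty(\Omega;\mathbb{R}^n)$, since star-shapedness forces the half-line emanating from each $y\in\supp f$ towards $B$ to stay inside $\Omega$, so the $r$-integration never leaves $\Omega$ and $\supp(\text{Bog}\,f)$ stays compactly inside $\Omega$; (iii) linearity, which is obvious.

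\emph{Step 2 (continuity).} I would decompose $\mathbf{N}(x,y)=K(x,x-y)+G(x,y)$, where $z\mapsto K(x,z)$ is smooth on $\mathbb{R}^n\setminus\{0\}$, positively homogeneous of degree $-n$, and has vanishing spherical mean $\int_{\{|z|=1\}}K(x,z)\,d\sigma(z)=0$, with all bounds uniform in $x\in\Omega$, while $G$ is a weakly singular, hence $L^q$-bounded, kernel. Consequently $\nabla\,\text{Bog}$ is, modulo a lower-order smoothing term, a Calder\'on--Zygmund operator, so $\|\nabla\,\text{Bog}\,f\|_{L^q(\Omega)}\lesssim\|f\|_{L^q(\Omega)}$ for $q\in(1,\infty)$; combined with Poincar\'e this gives $\text{Bog}:\{f\in L^q(\Omega):\int f=0\}\to W^{1,q}_0(\Omega;\mathbb{R}^n)$, which already contains the stated $m=0$ bound. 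For $m\ge1$ one commutes derivatives through the kernel, $\partial^\alpha\text{Bog}\,f=\text{Bog}\,(\partial^\alpha f)+\sum(\text{commutator terms})$, the commutator kernels involving only derivatives of $\omega$ and being again of Calder\'on--Zygmund or smoothing type, which propagates the estimate to every order $m$ (and in fact yields the sharper mapping $W^{m,q}_0\to W^{m+1,q}_0$). Since $\{f\in C_0^\infty(\Omega):\int f=0\}$ is dense in $\{f\in W^{m,q}_0(\Omega):\int f=0\}$ — approximate by $C_0^\infty$ functions and subtract a fixed unit-mass bump times their integrals — the operator extends by continuity to the space in the statement, and $\text{div}\,\text{Bog}\,f=f$ passes to the limit. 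The only genuinely technical point of the whole argument lives here: checking that $K(x,z)$ satisfies the Calder\'on--Zygmund hypotheses uniformly in $x$ and that every commutator kernel arising for $m\ge1$ is admissible; I would quote this kernel bookkeeping from \cite[Ch.~III]{galdi-book} rather than reproduce it.

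\emph{Step 3 (general bounded Lipschitz $\Omega$).} A bounded Lipschitz domain can be written as $\Omega=\bigcup_{i=1}^N\Omega_i$ with each $\Omega_i$ star-shaped with respect to a ball, and, the domain being connected, the $\Omega_i$ may be indexed so that consecutive ones overlap. I would choose a partition of unity $\{\varphi_i\}$ with $\supp\varphi_i\Subset\Omega_i$ and $\sum_i\varphi_i\equiv1$ on $\overline\Omega$, fix $\theta_i\in C_0^\infty(\Omega_i\cap\Omega_{i+1})$ with $\int\theta_i=1$, and set $\kappa_i:=\int_\Omega\varphi_i f$, $S_i:=\sum_{j\le i}\kappa_j$ (so $S_0=S_N=0$ because $\int f=0$), $\theta_0:=0$, and $f_i:=\varphi_i f+S_{i-1}\theta_{i-1}-S_i\theta_i$ for $i=1,\dots,N$. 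Then $\supp f_i\Subset\Omega_i$, $\int f_i=0$ for each $i$, and $\sum_i f_i=f$ (the transfer terms telescope). Putting $\text{Bog}\,f:=\sum_{i=1}^N\text{Bog}_{\Omega_i}f_i$, with each summand extended by zero to $\Omega$, defines a linear operator with $\text{div}\,\text{Bog}\,f=\sum_i f_i=f$, values in $W^{m,q}_0(\Omega;\mathbb{R}^n)$, and norm controlled via Step~2 and the fixed norms of the $\varphi_i,\theta_i$; for $f\in C_0^\infty$ every piece is in $C_0^\infty$ by Step~1, hence $\text{Bog}\,f\in C_0^\infty(\Omega;\mathbb{R}^n)$, which completes the proof.
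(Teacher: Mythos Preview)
Your proposal is correct and follows precisely the classical \Bogovskij{} construction from \cite{Bog1,Bog2} and \cite[Ch.~III]{galdi-book}, which are exactly the references the paper cites. The paper itself does not prove this theorem at all: it is stated in the appendix as a well-known auxiliary result and simply referred to those sources, so you have in fact supplied strictly more than what the paper contains.
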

\subsection{An approximation lemma}
Given an open set $O$ we define the space $L^{2}_{\text{div}}(O)$ as follows:
\[
L_{\text{div}}^{2}\left(O\right):=\left\{ \phi:\phi\in C_{0}^{\infty}\left(O\right),\ \text{div}\phi=0\right\} ^{\left\Vert \cdot\right\Vert _{2}}.
\]
Then we include Lemma A.13  from \cite{LR14} 
as follows
\begin{lemma}\label{lemma:approx-psi}
For all $N>0$, $s>0$, $\varepsilon>0$ there exists $\sigma>0$ such that: for all 
\[
\eta\in H_{0}^{2}\left(\omega\right),\quad\left\Vert \eta\right\Vert _{H_{x}^{2}}<N
\]
and all $\varphi\in L_{\text{div}}^{2}\left(\Omega_{\eta}\right)$ with $\left\Vert \varphi\right\Vert _{L_{x}^{2}}\le1$, there exists 
$\psi\in L_{\text{div}}^{2}\left(\Omega_{\eta}\right)$ with $\left\Vert \psi\right\Vert _{L_{x}^{2}}\le 2$ such that 
\[
\left\Vert \varphi-\psi\right\Vert _{\left(H^{s}\left(\mathbb{R}^{3}\right)\right)^{\prime}}<\varepsilon\quad\text{supp}\psi\subset\Omega_{\eta-\sigma}
\]
\end{lemma}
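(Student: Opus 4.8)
The plan is to transfer the problem to the fixed reference cylinder $\Omega$ by means of the Piola transform $\mathcal{J}_\eta$ of Lemma~\ref{lm:Piola}, to solve it on $\Omega$ by a smooth radial contraction toward the symmetry axis followed by a mollification, and then to transfer back. For the reduction, note that $\psi_\eta$ maps $\Omega_{-\rho}:=\{r<R-\rho\}$ into $\Omega_{\eta-c(N)\rho}$ (with $c(N)=(R-\|\eta\|_{L^\infty})/R>0$), and that $\mathcal{J}_\eta$ preserves divergence-freeness and vanishing boundary values; moreover $\mathcal{J}_\eta$ is bounded with $N$-dependent constants on the spaces occurring below, via the $r<p$ clause of Lemma~\ref{lm:Piola} and its consequences by duality and interpolation (the field we will transfer is smooth, hence lies in every $W^{1,p}(\Omega)$). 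Thus it suffices to produce, on $\Omega$, a divergence-free field supported in $\Omega_{-\rho}$, close to the given one in $(H^s(\mathbb{R}^3))'$, with $\rho$ uniform.

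On $\Omega$, given $\hat\varphi\in L^2_{\mathrm{div}}(\Omega)$ with $\|\hat\varphi\|_{L^2}\le1$, I would take the linear self-map $S_\rho(x,y,z):=\bigl((1-\rho/R)x,(1-\rho/R)y,z\bigr)$, which maps $\Omega$ diffeomorphically onto $\Omega_{-\rho}$, and set $\hat\psi_\rho:=\mathcal{J}_{S_\rho}\hat\varphi$ (the Piola transform along $S_\rho$), extended by zero. Then $\hat\psi_\rho$ is divergence free, supported in $\overline{\Omega_{-\rho}}$, and a change of variables gives $\|\hat\psi_\rho\|_{L^2}\le(1-\rho/R)^{-1}\|\hat\varphi\|_{L^2}$. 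The key point is the negative-norm estimate: testing $\hat\varphi-\hat\psi_\rho$ against $\phi\in H^s(\mathbb{R}^3)$ and changing variables $x=S_\rho(y)$, with $\nabla S_\rho=\mathrm{diag}(1-\rho/R,1-\rho/R,1)$ constant, one obtains
\[
\langle\hat\varphi-\hat\psi_\rho,\phi\rangle=\int_\Omega\hat\varphi\cdot(\phi-\phi\circ S_\rho)\,dx+\tfrac{\rho}{R}\int_\Omega(\hat\varphi^1,\hat\varphi^2)\cdot(\phi^1,\phi^2)\circ S_\rho\,dx .
\]
Using the standard estimate $\|\phi-\phi\circ S_\rho\|_{L^2(\mathbb{R}^3)}\lesssim\rho^s\|\phi\|_{H^s(\mathbb{R}^3)}$ for $0<s\le1$ (interpolating the trivial $L^2$ bound with the $H^1$ bound coming from the fundamental theorem of calculus) and $\|\phi\circ S_\rho\|_{L^2}\lesssim\|\phi\|_{L^2}$, this yields $\|\hat\varphi-\hat\psi_\rho\|_{(H^s(\mathbb{R}^3))'}\lesssim\rho^s\|\hat\varphi\|_{L^2}$, uniformly over the unit ball of $L^2_{\mathrm{div}}(\Omega)$. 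A final mollification at scale $\ll\rho$ replaces $\hat\psi_\rho$ by a $C^\infty_{0,\mathrm{div}}$ field supported in $\Omega_{-\rho/2}$, at the cost of an extra $o(1)$ error in $(H^s(\mathbb{R}^3))'$ and a factor $1+o(1)$ in $L^2$; choosing $\rho$ small settles the reference-domain claim.

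Transferring back, one sets $\psi:=\mathcal{J}_\eta\hat\psi$ (with $\hat\varphi:=\mathcal{J}_\eta^{-1}\varphi$): then $\psi\in C^\infty_{0,\mathrm{div}}(\Omega_\eta)$, $\mathrm{supp}\,\psi\subset\psi_\eta(\Omega_{-\rho/2})\subset\Omega_{\eta-\sigma}$ upon choosing $\rho:=2\sigma/c(N)$, $\|\psi\|_{L^2(\Omega_\eta)}\le C(N)\|\varphi\|_{L^2}$, and $\|\varphi-\psi\|_{(H^s(\mathbb{R}^3))'}=\|\mathcal{J}_\eta(\hat\varphi-\hat\psi)\|_{(H^s(\mathbb{R}^3))'}\le C(N)\|\hat\varphi-\hat\psi\|_{(H^s(\mathbb{R}^3))'}<\varepsilon$ for $\sigma$ small. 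The normalization $\|\psi\|_{L^2}\le2$ in the statement is then recovered either by tracking constants with $N$ fixed, or by running the contraction directly on $\Omega_\eta$ when $\Gamma^\eta$ is regular enough, where the Piola--Jacobian factors are $1+o(1)$.

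I expect the main obstacle to be the uniformity in $\eta$. In the present $3$D/$2$D geometry one only has $\eta\in H^2(\omega)\hookrightarrow W^{1,q}(\omega)$ for all finite $q$, but not $W^{1,\infty}$, so $\Gamma^\eta$ need not be Lipschitz; consequently the Piola transform cannot be controlled by naive $L^\infty$ bounds, and one must route everything through the $r<p$ estimates of Lemma~\ref{lm:Piola}, the delicate point being the boundedness of $\mathcal{J}_\eta,\mathcal{J}_\eta^{-1}$ on the dual space $(H^s(\mathbb{R}^3))'$ with $N$-dependent constants (obtained by duality and interpolation with the $W^{1,r}$ bounds). Equivalently, if one contracts directly on $\Omega_\eta$, the distortion terms $\nabla S^\eta_\rho-\mathrm{Id}$ must be absorbed in $L^q$ using $H^s(\mathbb{R}^3)\hookrightarrow L^{6/(3-2s)}$, which forces the collar width and the radial displacement to be tuned together. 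For the explicit construction in the lower-dimensional (hence Lipschitz) model case we refer to \cite{LR14}.
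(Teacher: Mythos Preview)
The paper does not actually prove this lemma; it simply records it as Lemma~A.13 of \cite{LR14}. So there is no argument in the paper to compare against, and your task was to reconstruct the \cite{LR14} proof.

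Your construction is correct, but you have overcomplicated it and, as a result, talked yourself into a difficulty that is not there. The crucial observation you miss is that $\psi_\eta$ \emph{commutes} with the global radial scaling: with $T_\lambda(x,y,z):=(\lambda x,\lambda y,z)$ and your $S_\rho$ (so $\lambda=1-\rho/R$) one has $\psi_\eta\circ S_\rho=T_\lambda\circ\psi_\eta$, simply because both maps act only on the radial variable and $\eta$ depends only on $(\theta,z)$. Hence the Piola transforms compose as
\[
\mathcal{J}_\eta\circ\mathcal{J}_{S_\rho}\circ\mathcal{J}_\eta^{-1}=\mathcal{J}_{\psi_\eta\circ S_\rho\circ\psi_\eta^{-1}}=\mathcal{J}_{T_\lambda},
\]
so your ``transfer, contract, transfer back'' is \emph{exactly} the direct radial contraction $\psi=\mathcal{J}_{T_\lambda}\varphi$ on $\Omega_\eta$. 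The matrix $\nabla T_\lambda=\mathrm{diag}(\lambda,\lambda,1)$ is constant, so the change of variables gives $\|\psi\|_{L^2}\le\lambda^{-1}\|\varphi\|_{L^2}\le2$ immediately, and your own computation of $\langle\varphi-\psi,\phi\rangle$ (which is really the $T_\lambda$ computation in disguise) gives the $(H^s)'$ bound $\lesssim(1-\lambda)^{\min(s,1)}$, all with constants independent of $\eta$. The ``main obstacle'' you describe---the non-Lipschitz boundary and the unboundedness of $\mathcal{J}_\eta$ on $L^2$---never enters, because the two applications of $\mathcal{J}_\eta$ and $\mathcal{J}_\eta^{-1}$ cancel exactly.

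The only place your worries would bite is the mollification step, since mollifying on $\Omega$ breaks the commutation. But this step is unnecessary: the statement only asks for $\psi\in L^2_{\mathrm{div}}(\Omega_\eta)$, not $C^\infty_{0,\mathrm{div}}$, and $\mathcal{J}_{T_\lambda}$ is $L^2$-bounded and maps $C^\infty_{0,\mathrm{div}}$ to itself, so $\mathcal{J}_{T_\lambda}\varphi\in L^2_{\mathrm{div}}(\Omega_\eta)$ directly. Drop the mollification and the detour through the reference domain, and your proof is clean; this direct contraction is essentially what \cite{LR14} does.
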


\subsection{A regularizing operator}
\begin{lemma}
For every $\varepsilon>0$ there exists an operator 
\[
\mathcal{R}_{\varepsilon}:C_{per}^{0}\left(\overline{I};C^{0}\left(\overline{\omega}\right)\right)\mapsto C_{per}^{4}\left(I;C^{4}\left(\omega\right)\right)
\]
such that 
\begin{enumerate}[(a)]
    \item $\mathcal{R}_{\varepsilon} \to \delta$ uniformly, for every $\delta \in C_{per}^{0}\left(\overline{I};C^{0}\left(\overline{\omega}\right)\right)$.
\item For any $\delta$ as above we have $\mathcal{R}_{\varepsilon}\delta\to\delta$ in $L_{per}^{p}\left(I;X\right)$ for any $p \in [1, \infty]$ and any $X$ of the following: $L^{q}\left(\omega\right)$, $W^{1,q}(\omega)$, $C^{\gamma}(\omega)$ with $q\in [1,\infty]$ and $\gamma \in (0,1)$.
\item If $\partial_{t}\delta\in L^{p}\left(I\times\omega\right)$ it holds that $\partial_{t}\mathcal{R}_{\varepsilon}\delta=\mathcal{R}_{\varepsilon}\partial_{t}\delta\to\partial_{t}\delta$.

\item For any $\delta$ as above $\left\Vert \mathcal{R}_{\varepsilon}\delta\right\Vert _{L_{t,x}^{\infty}}\le\left\Vert \delta\right\Vert _{L_{t,x}^{\infty}}$ 
\end{enumerate}
\end{lemma}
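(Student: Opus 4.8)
The plan is to take for $\mathcal{R}_{\varepsilon}$ the composition of a $T$-periodic Friedrichs mollification in time with a standard mollification in the spatial variables $(\theta,z)\in\omega$, performed after extending the argument off $\omega$ in a way that does not increase its supremum. Concretely, fix nonnegative $\rho\in C_{c}^{\infty}(\mathbb{R})$ and $\kappa\in C_{c}^{\infty}(\mathbb{R}^{2})$ with $\int\rho=\int\kappa=1$ and supports in the unit balls, and set $\rho_{\varepsilon}(t):=\varepsilon^{-1}\rho(t/\varepsilon)$, $\kappa_{\varepsilon}(\theta,z):=\varepsilon^{-2}\kappa((\theta,z)/\varepsilon)$. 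Given $\delta\in C_{per}^{0}(\overline{I};C^{0}(\overline{\omega}))$, extend $\delta(t,\cdot)$ from $\overline{\omega}$ to $\mathbb{R}^{2}$ by a linear continuous extension operator $E$ with $\left\Vert E\delta(t,\cdot)\right\Vert_{L^{\infty}(\mathbb{R}^{2})}=\left\Vert \delta(t,\cdot)\right\Vert_{L^{\infty}(\omega)}$ (the McShane/Tietze extension applied componentwise; in all applications of this lemma $\delta$ vanishes near $\partial\omega$, so $E$ may simply be extension by zero), and extend in time $T$-periodically. Then define
\[
\mathcal{R}_{\varepsilon}\delta:=\bigl(\rho_{\varepsilon}\ast_{t}(\kappa_{\varepsilon}\ast_{(\theta,z)}E\delta)\bigr)\big|_{I\times\omega}.
\]
Since convolution with $C_{c}^{\infty}$ kernels is smoothing and $T$-periodic convolution preserves $T$-periodicity, $\mathcal{R}_{\varepsilon}\delta\in C_{per}^{\infty}(I;C^{\infty}(\omega))\subset C_{per}^{4}(I;C^{4}(\omega))$, and $\mathcal{R}_{\varepsilon}$ is linear, so it maps into the asserted space.

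Next I would verify (d) and (a). For (d), $\lvert\mathcal{R}_{\varepsilon}\delta(t,x)\rvert\le\left\Vert E\delta\right\Vert_{L_{t,x}^{\infty}}\int\rho_{\varepsilon}\int\kappa_{\varepsilon}=\left\Vert \delta\right\Vert_{L_{t,x}^{\infty}}$, because both kernels are probability densities and $E$ preserves the sup-norm. For (a), the function $E\delta$ is continuous and $T$-periodic in $t$, hence uniformly continuous on $\mathbb{R}\times K$ for every compact $K\supset\overline{\omega}$; the classical estimate $\left\Vert \kappa_{\varepsilon}\ast f-f\right\Vert_{L^{\infty}(K)}\le\sup_{\lvert y\rvert\le\varepsilon}\left\Vert f(\cdot-y)-f\right\Vert_{L^{\infty}(K')}$ together with the analogous bound in $t$ gives $\mathcal{R}_{\varepsilon}\delta\to\delta$ uniformly on $I\times\overline{\omega}$.

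For (b), suppose in addition $\delta\in L_{per}^{p}(I;X)$ with $X\in\{L^{q}(\omega),W^{1,q}(\omega),C^{\gamma}(\omega)\}$ (this is the content of "$\delta$ as above" in each named norm). When $q<\infty$, translation is continuous on $L^{q}$ and on $W^{1,q}$, so Minkowski's integral inequality in the $L_{t}^{p}$ norm, Young's inequality (providing an $\varepsilon$-uniform dominating bound through (d)) and dominated convergence yield convergence in $L_{per}^{p}(I;L^{q})$ and $L_{per}^{p}(I;W^{1,q})$; the $q=\infty$ and $C^{0}$ cases follow from (a) and the finite measure of $I$. For the $C^{\gamma}$ statement, since naive translation continuity fails in $C^{\gamma}$, I would argue by interpolation: the mollification is a contraction on $C^{\gamma}$ and converges to the identity there on the dense set of smooth functions, or one interpolates the uniform convergence of (a) against the $\varepsilon$-uniform bound of $\mathcal{R}_{\varepsilon}\delta$ in $C^{\gamma'}$ for $\gamma<\gamma'$ whenever $\delta$ enjoys that slightly higher regularity. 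Finally (c): writing $\mathcal{R}_{\varepsilon}=\mathcal{R}_{\varepsilon}^{t}\circ\mathcal{R}_{\varepsilon}^{x}$ with $\mathcal{R}_{\varepsilon}^{t}=\rho_{\varepsilon}\ast_{t}(\cdot)$ and $\mathcal{R}_{\varepsilon}^{x}=\kappa_{\varepsilon}\ast_{(\theta,z)}E(\cdot)$, the spatial part commutes with $\partial_{t}$ trivially and $\partial_{t}(\rho_{\varepsilon}\ast_{t}g)=\rho_{\varepsilon}\ast_{t}\partial_{t}g$ whenever $g$ has a weak time derivative in $L^{p}$, so $\partial_{t}\mathcal{R}_{\varepsilon}\delta=\mathcal{R}_{\varepsilon}\partial_{t}\delta$, and the convergence $\partial_{t}\mathcal{R}_{\varepsilon}\delta\to\partial_{t}\delta$ is (b) applied to $\partial_{t}\delta\in L^{p}(I\times\omega)$.

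I expect no serious obstacle: the only delicate points are (i) choosing the spatial extension $E$ so that (d) holds with constant exactly $1$ while the convergences (a)–(c) survive the subsequent restriction to $\omega$ — handled by a sup-norm-preserving extension, which is just extension by zero for the compactly supported data actually used — and (ii) the $C^{\gamma}$ clause of (b), which cannot rely on translation continuity in $C^{\gamma}$ and must instead be obtained by density or interpolation as indicated. Everything else is the standard behaviour of Friedrichs mollifiers, so I would simply record it as "elementary properties of mollification" and spell out only the two points above.
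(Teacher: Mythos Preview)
Your proposal is correct and follows essentially the same approach as the paper: extend $\delta$ periodically in time and by zero in space to $\mathbb{R}^{2}$, then mollify in space and time. The paper's proof is a two-line sketch referring to \cite{LR14}, while you spell out the verification of (a)--(d); your discussion of the extension (zero versus sup-norm-preserving) and of the $C^{\gamma}$ clause is more careful than what the paper records, but the construction is the same.
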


\begin{proof}
    For $\delta \in C_{per}^{0}\left(\overline{I};C^{0}\left(\overline{\omega}\right)\right)$ we may extend it periodically in time to $\mathbb{R}$ and by zero in space to $\mathbb{R}^{2}$. Then we mollify in space and time. See also \cite[p. 237-238]{LR14}.
\end{proof}
\begin{remark}
    For any $\mathbf{v}\in L_{per}^{2}\left(I;L^{2}\left(\mathbb{R}^{3}\right)\right)$ we shall proceed analogously and define $\mathcal{R}_{\varepsilon}\mathbf{v}$.
\end{remark}
\subsection{A set-valued fixed point theorem}
The following result can be found in \cite{Granas-Dug}:
\begin{theorem}\label{thm:kakutani}
    Let $C$ be a convex subset of a normed vector space $Z$ and $F:C \mapsto \mathcal{P}(C)$ a set-valued mapping, where $\mathcal{P}(C)$ denotes the subsets of $C$. We assume the following:
    \begin{enumerate}[(i)]
        \item $F$ is upper-semicontinuous; or, equivalently, it has the \emph{closed-graph} property: if $c_n \to c$ and $z_n \in F(c_n)$ with $z_n \to z$ then $z \in F(c)$
        \item $F(C)$ is contained in a compact subset of $C$
        \item For all $z\in C$, $F(z)$ is non-empty, convex and compact.
    \end{enumerate}
 Then $F$ has a fixed point, that is there exists $c_0 \in C$ with $c_0 \in F(c_0)$  
\end{theorem}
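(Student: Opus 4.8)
The statement is the Kakutani--Glicksberg--Fan fixed point theorem, and the plan is to reduce it to Brouwer's theorem via a finite‑dimensional single‑valued approximation. First I would use hypothesis (ii): there is a compact set $K_0\subseteq C$ with $F(C)\subseteq K_0$, and since $Z$ is complete, the closed convex hull $\cocl K_0$ is compact and, $C$ being convex (and closed in the situations of interest), it is contained in $C$. Replacing $C$ by $\cocl K_0$ and noting that $F$ restricted to this set is still upper‑semicontinuous with nonempty convex compact values and maps it into its own subsets (because $F(\cocl K_0)\subseteq K_0\subseteq\cocl K_0$), we may assume from now on that $C$ is a nonempty compact convex set; by (i) in its closed‑graph form, $\graph F$ is then closed in $C\times C$.

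The heart of the argument is the construction, for each $\varepsilon>0$, of a continuous single‑valued map $f_\varepsilon\colon C\to C$ whose graph lies in the $\varepsilon$‑neighbourhood of $\graph F$. Let $B$ denote the open unit ball of $Z$. By upper‑semicontinuity, for each $x\in C$ pick $\delta(x)\in(0,\varepsilon)$ with $F(y)\subseteq F(x)+\tfrac{1}{2}\varepsilon B$ whenever $\|y-x\|<\delta(x)$. By compactness choose $x_1,\dots,x_m\in C$ with $C\subseteq\bigcup_{j}B\bigl(x_j,\tfrac{1}{2}\delta(x_j)\bigr)$, a subordinate partition of unity $p_1,\dots,p_m$, and points $y_j\in F(x_j)$, and set $f_\varepsilon(x):=\sum_{j=1}^m p_j(x)\,y_j$. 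Fixing $x$, put $J:=\{\,j:p_j(x)>0\,\}$ and let $j_0\in J$ maximize $\delta(x_{j_0})$; for $j\in J$ the triangle inequality gives $\|x_j-x_{j_0}\|\le\tfrac{1}{2}\delta(x_j)+\tfrac{1}{2}\delta(x_{j_0})\le\delta(x_{j_0})$, hence $y_j\in F(x_{j_0})+\tfrac{1}{2}\varepsilon B$, and convexity of $F(x_{j_0})$ yields $f_\varepsilon(x)\in F(x_{j_0})+\tfrac{1}{2}\varepsilon B$ while $\|x-x_{j_0}\|<\varepsilon$. Thus $f_\varepsilon$ is continuous, maps $C$ into the finite‑dimensional compact convex set $P_\varepsilon:=\co\{y_1,\dots,y_m\}\subseteq C$, and $\graph f_\varepsilon$ lies in the $\varepsilon$‑neighbourhood of $\graph F$. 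Applying Brouwer's fixed point theorem to $f_\varepsilon|_{P_\varepsilon}\colon P_\varepsilon\to P_\varepsilon$ produces $x_\varepsilon\in P_\varepsilon$ with $f_\varepsilon(x_\varepsilon)=x_\varepsilon$.

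Finally I would let $\varepsilon=1/n\to 0$. Compactness of $C$ yields a subsequence $x_{1/n}\to x^{\ast}\in C$. Since $(x_{1/n},x_{1/n})=(x_{1/n},f_{1/n}(x_{1/n}))$ lies within $1/n$ of $\graph F$, there are $(\tilde x_n,\tilde y_n)\in\graph F$ with $\tilde x_n\to x^{\ast}$ and $\tilde y_n\to x^{\ast}$, so the closed‑graph property forces $x^{\ast}\in F(x^{\ast})$. The only genuinely delicate point is the middle paragraph: upper‑semicontinuity controls $F(y)$ in terms of $F(x)$ only for $y$ near $x$, and not conversely, so the covering and the choice of the index $j_0$ must be arranged exactly as above for the convex‑combination map $f_\varepsilon$ to remain $\varepsilon$‑close to $F$; the reduction to finite dimensions and the limit passage are then routine. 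Since this is a classical result, one may alternatively simply invoke \cite{Granas-Dug}.
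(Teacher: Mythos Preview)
The paper does not prove this theorem at all; it is placed in the appendix as a quoted auxiliary result with the single sentence ``The following result can be found in \cite{Granas-Dug}''. So there is nothing to compare at the level of argument --- your final remark ``one may alternatively simply invoke \cite{Granas-Dug}'' is in fact exactly what the paper does.

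That said, your sketch is the standard Kakutani--Glicksberg--Fan proof and is essentially correct: the finite single-valued approximation $f_\varepsilon$ via a partition of unity, the careful choice of the index $j_0$ maximizing $\delta(x_{j_0})$ to push all $y_j$ into a single $F(x_{j_0})+\tfrac{\varepsilon}{2}B$, the reduction to Brouwer on the simplex $P_\varepsilon=\co\{y_1,\dots,y_m\}$, and the closed-graph limit passage are all fine. The one point to flag is your opening reduction: you assume $Z$ is complete so that $\cocl K_0$ is compact, and that $C$ is closed so that $\cocl K_0\subseteq C$. Neither hypothesis is in the theorem as stated (only ``normed vector space'' and ``convex subset''). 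In the paper's application $Z$ is a product of Banach spaces and $C$ is a closed ball, so both extra hypotheses are satisfied; but if you want your argument to match the stated generality you should either note explicitly that these mild extra assumptions are being used, or bypass the reduction by running the covering/partition-of-unity construction directly on the compact set $K_0$ (the map $f_\varepsilon$ then still lands in $P_\varepsilon\subseteq C$, and the fixed points $x_\varepsilon\in P_\varepsilon\subseteq C$ accumulate in the compact $K_0$).
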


\bigskip

\textbf{Data Availability Statement}

Our manuscript has no available data.

\bibliographystyle{plain}
\bibliography{bibliography} 
\end{document}